\let\originallesssim\lesssim
\let\originalgtrsim\gtrsim
\DeclareRobustCommand{\lesssim}{%
  \mathrel{\mathpalette\lowersim\originallesssim}%
}
\DeclareRobustCommand{\gtrsim}{%
  \mathrel{\mathpalette\lowersim\originalgtrsim}%
}
\newcommand{\lowersim}[2]{%
  \sbox\z@{$#1<$}%
  \raisebox{-\dimexpr\height-\ht\z@}{$\m@th#1#2$}%
}
\newtheorem{thm}{Theorem}[section]
\newtheorem{remark}[thm]{Remark}
\newtheorem{lem}[thm]{Lemma}
\newtheorem{prop}[thm]{Proposition}
\newtheorem{coro}[thm]{Corollary}
\newcommand\independent{\protect\mathpalette{\protect\independent}{\perp}} 
\def\independent#1#2{\mathrel{\rlap{$#1#2$}\mkern2mu{#1#2}}}
\def\k{{\kappa}}
\def\Ent{{\rm Ent}}
\def\ep{\varepsilon}
\def\phi{\varphi}
\def\bee{\begin{eqnarray*}}
\def\ene{\end{eqnarray*}}
\begin{document}
 \author{}

%opening
\title{Exponential inequalities in probability spaces revisited}

\author{Ali Barki, Sergey G. Bobkov, Esther Bou Dagher and Cyril Roberto}

\thanks{Research of S.G.B. was partially supported by the NSF
grant DMS-2154001. E.B.D. was supported by the LMS Early Career Fellowship 
(reference: ECF-2022-02) and the EPSRC Maths Research Associates 2021 ICL 
(reference: EP/W522673/1). The first and last author are supported by the Labex MME-DII 
funded by ANR, reference ANR-11-LBX-0023-01 and the fondation Simone et Cino Del Duca, France.
This research has been conducted within the FP2M federation (CNRS FR 2036).}

\address{MODAL'X, UPL, Univ. Paris Nanterre, CNRS, F92000 Nanterre France}

\address{School of Mathematics, University of Minnesota, Minneapolis, MN, USA}

\address{Department of Mathematics, Imperial College London,
180 Queen’s Gate, SW7 2AZ, London, \phantom{...} United Kingdom}

\email{ali.barki@ens-rennes.fr, bobko001@umn.edu, 
esther.bou-dagher17@imperial.ac.uk, croberto@math.cnrs.fr}
%
%\keywords{}

\date{\today}

\maketitle
 
\begin{abstract}
We revisit several results on exponential integrability in probability 
spaces and derive some new ones. In particular, we give a quantitative
form of recent results by Cianchi-Musil and Pick in the framework 
of Moser-Trudinger-type inequalities, and recover Ivanisvili-Russell's 
inequality for the Gaussian measure. One key ingredient is the use of 
a dual argument, which is new in this context, that we also 
implement in the discrete setting of the Poisson measure on integers.
\end{abstract}

\section{Introduction}

The aim of this paper is to develop a number of upper bounds
on exponential moments of functions on the Euclidean and abstract probability 
spaces under certain smoothness-type conditions. For the first motivating example, 
suppose that we are given a Borel probability measure $\mu$ on 
$\mathbb{R}^n$ satisfying a logarithmic Sobolev inequality
\begin{equation} \label{eq:log-sobolev}
\int f e^f d\mu \leq c \int |\nabla f|^2 e^f d\mu \quad 
\mbox{for all smooth} \ f \ \mbox{on} \ \mathbb{R}^n \ 
\mbox{such that}\, \int e^f d\mu = 1 
\end{equation}
with some constant $c>0$ depending on the measure, only. We note that here and all along the paper, the integrals are understood over the whole region.
Under this analytic hypothesis, it was shown in \cite{bobkov-gotze} that
the exponential inequality
\begin{equation} \label{eq:bg}
\int e^f d\mu \leq 
\left( \int e^{\alpha |\nabla f|^2} d\mu \right)^\frac{c}{\alpha - c}, 
\end{equation}
holds true for all smooth $f$ on $\mathbb{R}^n$ with $\int f d\mu = 0$
and for an arbitrary value $\alpha > c$. Here $\nabla f$ is the 
gradient of $f$ and $|\cdot|$ stands for the Euclidean length.
 
A classical example of \eqref{eq:log-sobolev} is provided by
the standard Gaussian measure $\mu = \gamma_n$ with density 
$$
\varphi_n(x) \coloneqq (2\pi)^{-\frac{n}{2}}\,e^{-|x|^2/2},
$$ 
for which $c=1/2$ is optimal \cite{stam59,gross75}. More generally, 
according to the Bakry-Émery criterion \cite{bakry-emery}, the relation 
\eqref{eq:log-sobolev} with constant $c=1/(2\rho)$ holds for any 
$\mu$ with density $e^{-V(x)}$ satisfying 
$\mathrm{Hess}(V) \geq \rho > 0$ (as a matrix). 
We refer the interested reader to 
the textbooks and monographs 
\cite{bakry,ane,ledoux,guionnet-zegarlinski}
for an introduction to the log-Sobolev inequality and its numerous 
connections 
with other fields of mathematics (convex geometry, information theory, 
statistical mechanics...). 

In \cite{bobkov-gotze}, the authors deal with a more general setting of 
a metric space $\mathcal{X}$ in place of $\mathbb{R}^n$ and some "derivation" 
operator in place of $|\nabla f|$. The inequality \eqref{eq:bg} also holds 
in the discrete setting, for example, on a graph for an appropriate Dirichlet 
form in the associated log-Sobolev inequality.

Since $c=1/2$ for the Gaussian measure, the admissible range of the parameter 
$\alpha$ is $(1/2,\infty)$. As for the critical value,
it was observed by Talagrand that $\int e^f d\gamma_n < \infty$ whenever 
$\int e^{\frac{1}{2}|\nabla f|^2}d\gamma_n < \infty$
(which is mentioned without proof in \cite[after Corollary 2.2]{bobkov-gotze}).
A quantitative form of this statement has been recently obtained by 
Ivanisvili and Russell \cite{IR}, by proving that
\begin{equation} \label{eq:IR}
\log \int e^f d\gamma_n \leq  10 
\int \frac{e^{\frac{1}{2} |\nabla f|^2}}{1 + |\nabla f|} d\gamma_n  \quad 
\mbox{for all smooth} \ f \ \mbox{with} \int f d\mu = 0. 
\end{equation}
This result nicely complements the family of 
inequalities \eqref{eq:bg} by dealing with the extremal value $\alpha=1/2$.

A second closely related observation, which was another starting point 
of our investigation, is due to Cianchi, Musil, and Pick. In \cite[Theorem 1.1]{CMP}, 
it was shown that, if $f$ has $\gamma_n$-mean zero and satisfies 
$\int e^{\frac{1}{\kappa^{\beta}}|\nabla f|^\beta} d\gamma_n < M$ for some $\beta >0$ and $M>1$, then 
with some constant $C(M)$ depending on $M$ only, we have
\begin{equation} \label{eq:cmp}
\int \exp\Big\{ |f|^\frac{2\beta}{\beta + 2}\Big\}\, d\gamma_n < C(M)
\end{equation}
as soon as 
\begin{equation} \label{eq:kbeta}
\kappa \leq \kappa_\beta := \frac{1}{\sqrt{2}} + \frac{\sqrt{2}}{\beta} .
\end{equation}
Here, the particular value $\beta=2$ also leads to a refined form of 
Talagrand's observation,
$$
\int e^{|f|}\, d\gamma_n < C(M), \; \mathrm{whenever}\;\;
 \int e^{\frac{1}{2}|\nabla f|^2}d\gamma_n<M,
$$
although \eqref{eq:IR} is more quantitative for such parameter $\beta$.
In \cite{CMP20}, Cianchi, Musil, and Pick show improved bounds in the sense that the integrand of \eqref{eq:cmp} has a faster growth, i.e. they proved that if $\phi:[0,\infty)\rightarrow[0,\infty)$ is an increasing function that diverges to $\infty$ as $t\rightarrow \infty$ with a sufficiently mild growth, then $\int \exp\Big\{( |f|)^\frac{2\beta}{\beta + 2}\Big\}\, \phi(|f|) d\gamma_n < C(M)$. In a subsequent paper \cite{CMP21}, the same authors  derive similar bounds under the constraint 
$\int \exp\{\kappa'|L f|^\beta\}\, d\gamma_n < M$ in place of 
$\int \exp\{\frac{1}{\kappa^\beta}|\nabla f|^\beta\}\, d\gamma_n < M$ with a different 
exponent than $2\beta/(\beta+2),$ and where $L = \Delta - x \cdot \nabla$ is the Ornstein-Uhlenbeck operator.

One of the main features of all these results is that they are dimension 
free, unlike the Moser-Trudinger inequality in the Euclidean space over
the Lebesgue measure. We refer the reader to the introduction of \cite{CMP,CMP21}, 
or to the textbook \cite{saloff-coste} for more information on Moser-Trudinger 
inequality, a historical presentation and references.

It is therefore natural to consider on the Euclidean space equipped 
with a probability measure $\mu$ the following general exponential inequality
\begin{equation} \label{eq:exp}
\int e^f d\mu \leq F \left( \int G \left( |\nabla f| \right) d\mu\right) \quad  
\mbox{for all smooth} \ f \ \mbox{with} \int fd\mu = 0,
\end{equation}
where $F, G \colon [0, \infty) \to [0,\infty)$ are given non-decreasing
functions. More general spaces may also be involved
in this family of Sobolev-type relations.

In this paper our aim is to establish inequalities of the type \eqref{eq:exp} 
in different settings including the framework of the $\Gamma_2$ formalism.
First, under the so-called $\Gamma_2$ condition (which is stronger than 
the log-Sobolev inequality), and using the semi-group technique, we sharpen 
\eqref{eq:bg}, by proving a similar relation with a better exponent.
Moreover, we obtain local inequalities for the semi-group in place 
of the measure $\mu$. Then, we extend the approach of \cite{bobkov-gotze} 
to the class of sub-Gaussian measures under a weaker hypothesis in the form
of a modified log-Sobolev inequality. This will lead us to new exponential 
inequalities for measures with sub-Gaussian tails like the ones with densities 
proportional to $\exp\{-|x|^p\}$, $p \in (1,2)$ (on each fibre).

On the other hand, we introduce a simple and direct argument, essentially based 
on convex duality, to recover the inequality \eqref{eq:IR} for the Gaussian measure, 
in a slightly modified form. One interesting point is that, though 
being totally different, our approach leads to the same conclusion as in
\cite{IR}. We believe that one cannot improve such a relation (except 
for the numerical constant 10). In Section \ref{sec:first} we give more on this 
intuition. 

Our dual argument reveals to be robust enough to be able to give a quantitative 
form of Cianchi, Musil, and Pick  \cite{CMP} for some range of the parameter 
$\beta$, namely $\beta \in (\sqrt{5}-1,2)$, when $\kappa=\kappa_\beta$.
Finally, we will deal in the final section with the discrete setting.

%%%%%%%%%%%%%%%%%%%%%%%%%%%%%%%%%%%%%%%%%%%%%%%%%%%%%%%%%%%%%%%%%%%%%%%%%%
%%%%%%%%%%%%%%%%%%%%%%%%%%%%%%%%%%%%%%%%%%%%%%%%%%%%%%%%%%%%%%%%%%%%%%%%%%
%%%%%%%%%%%%%%%%%%%%%%%%%%%%%%%%%%%%%%%%%%%%%%%%%%%%%%%%%%%%%%%%%%%%%%%%%%

\section{Perturbation, stability, optimality and Reduction} \label{sec:first}

This section is preparatory. Here, we first prove that the 
inequality \eqref{eq:exp} 
is stable under bounded perturbation of the density of the 
probability measure $\mu$. This property is commonly referred to as 
the Holley-Stroock perturbation result. Using the Caffarelli contraction 
theorem, we also show that \eqref{eq:exp}, being valid for the Gaussian 
measure, is extended to the similar relation for any contraction of
$\gamma_n$. Then we will show that the constraint $\int fd\mu = 0$ can be changed,
under mild assumption, into $m_f=0$, where $m_f$ is the median of $f$.

Finally, using Ehrhard's rearrangement technique, we prove that the
inequality \eqref{eq:exp} is by essence one dimensional and may be further 
reduced to the half-line for the class of non-decreasing functions satisfying 
$f(0)=0$ (Hardy-type constraints), in some specific case.

\subsection{Bounded perturbation}

The main result of this section is the following:

\begin{thm} \label{th:HS}
Given $F, G \colon [0, \infty) \to [0,\infty)$, where $F$ is increasing, and let
$\mu$ be a probability measure on $\mathbb{R}^n$ satisfying
$$
\int e^f d\mu \leq F \left( \int G \left( |\nabla f| \right) d\mu \right)
$$
for all smooth $f$ such that $\int f d\mu = 0$. Let $\nu$ be a probability measure 
absolutely continuous with respect to $\mu$ with relative density $h = d\nu/d\mu$
such that $a \leq h(x) \leq b$ for all $x \in \mathbb R^n$ with some 
constants $0<a<b<\infty$. Then, for any smooth $f$ such that $\int fd\nu =0$,
$$
\int e^f d\nu \leq \widetilde F\left(\int G\left(|\nabla f|\right) d\nu \right),
\quad \widetilde F(t) \coloneqq 1+ b F(t/a), \ t \geq 0.
$$
\end{thm}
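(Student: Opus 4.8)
The plan is to reduce the inequality for $\nu$ to the hypothesis for $\mu$ by absorbing the mean constraint. Fix a smooth $f$ with $\int f\,d\nu = 0$. The natural move is to set $g = f - \int f\,d\mu$, so that $\int g\,d\mu = 0$ and hence the hypothesis applies to $g$:
\begin{equation*}
\int e^{g}\,d\mu \le F\!\left(\int G(|\nabla g|)\,d\mu\right) = F\!\left(\int G(|\nabla f|)\,d\mu\right),
\end{equation*}
using $\nabla g = \nabla f$. Writing $m \coloneqq \int f\,d\mu$, this says $\int e^{f}\,d\mu \le e^{m}\,F\!\left(\int G(|\nabla f|)\,d\mu\right)$.

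Next I would pass from $\mu$-integrals to $\nu$-integrals using the two-sided bound $a \le h \le b$. On one hand, $\int G(|\nabla f|)\,d\mu = \int G(|\nabla f|)\,h^{-1}\,d\nu \le \tfrac{1}{a}\int G(|\nabla f|)\,d\nu$, and since $F$ is non-decreasing this controls the right-hand side by $F\!\left(\tfrac{1}{a}\int G(|\nabla f|)\,d\nu\right)$. On the other hand, $\int e^{f}\,d\nu = \int e^{f}\,h\,d\mu \le b\int e^{f}\,d\mu$. Combining,
\begin{equation*}
\int e^{f}\,d\nu \le b\,e^{m}\,F\!\left(\tfrac{1}{a}\textstyle\int G(|\nabla f|)\,d\nu\right).
\end{equation*}

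The remaining point — and the only place any real estimate is needed — is to bound the factor $e^{m} = \exp\{\int f\,d\mu\}$. Since $\int f\,d\nu = 0$ and $h \le b$, one has $\int f^{+}\,d\mu \le \tfrac{1}{a}\int f^{+}\,d\nu$ and similarly for $f^{-}$; more simply, $m = \int f\,d\mu = \int f\,d\mu - \tfrac{1}{b}\int f\,d\nu$, but the clean bound is obtained by Jensen: $e^{m} = e^{\int f\,d\mu} \le \int e^{f}\,d\mu \le \tfrac{1}{a}\int e^{f}\,d\nu$. Wait — this would make the argument circular, so instead I would bound $m$ from above by noting $m \le \log\int e^{f}\,d\mu$ is the wrong direction; the right approach is to observe that $e^{m}$ appears multiplying $F$, and since we only need an upper bound of the stated shape, we use $m \le \int f^{+}\,d\mu \le \tfrac1a \int f^{+}\,d\nu$. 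The cleanest route, however, is the following: apply the hypothesis not to $g = f - m$ but directly exploit that we may add any constant. Replacing $f$ by $f - c$ for the constant $c = \log\big(b F(\tfrac1a\int G(|\nabla f|)d\nu)\big)$ type normalization, one arrives at $\int e^{f}\,d\nu \le 1 + bF(\tfrac1a \int G(|\nabla f|)\,d\nu)$ after checking that the ``$+1$'' absorbs the error term coming from $\int f\,d\nu = 0$ versus $\int f\,d\mu = 0$.

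I expect the main obstacle to be exactly this last bookkeeping step: getting the additive constant $1$ and the scaling $t \mapsto t/a$ to come out precisely, rather than with some larger constant. The trick is that for $t \ge 0$ one has $e^{t} \le 1 + te^{t}$, or more usefully that if $\int e^{f}\,d\nu$ were large then $\int f^{+}\,d\nu$ would be large, forcing $\int f^{+}\,d\mu$ large and hence $\int G(|\nabla f|)\,d\nu$ large through the $\mu$-inequality applied to $f - m$; balancing these gives the claimed $\widetilde F(t) = 1 + bF(t/a)$. Everything else is a one-line substitution using $d\nu = h\,d\mu$ and monotonicity of $F$.
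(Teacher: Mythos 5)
Your first two reductions (passing from $d\nu$ to $d\mu$ via $a \le h \le b$, and using the monotonicity of $F$) are correct and also appear in the paper's argument. But the proof breaks down exactly where you sense it does: after applying the hypothesis to $g=f-\int f\,d\mu$ you are left with the factor $e^{m}$, $m=\int f\,d\mu$, and nothing in your proposal controls it. The mean $m$ is not bounded by $\int G(|\nabla f|)\,d\nu$ (it can be large while the gradient integral stays moderate), Jensen is circular as you yourself note, and the closing sentences --- ``the `$+1$' absorbs the error term'', ``balancing these gives the claimed $\widetilde F$'' --- are assertions, not an argument. What you actually establish is $\int e^f\,d\nu \le b\,e^{m}\,F\bigl(\tfrac1a\int G(|\nabla f|)\,d\nu\bigr)$, which is not the stated conclusion. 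This is a genuine gap, not bookkeeping.

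The ingredient you are missing is the paper's Lemma \ref{lem:inf}: for a $C^1$ convex $\Phi$ and a probability measure $\lambda$,
$$
\int \Phi(f)\,d\lambda - \Phi\Bigl(\int f\,d\lambda\Bigr) \;=\; \inf_{t\in\mathbb{R}}\int\bigl(\Phi(f)-\Phi(t)-\Phi'(t)(f-t)\bigr)\,d\lambda ,
$$
where each integrand is pointwise nonnegative by convexity. Taking $\Phi=\exp$ and using $d\nu=h\,d\mu\le b\,d\mu$ term by term inside the infimum yields in one stroke
$$
\int e^f\,d\nu - 1 \;=\; \int e^f\,d\nu - e^{\int f\,d\nu} \;\le\; b\Bigl(\int e^f\,d\mu - e^{\int f\,d\mu}\Bigr),
$$
i.e.\ the change of measure is performed on the \emph{centered} Jensen defect rather than on $\int e^f\,d\nu$ itself; this is what produces the additive $1$ in $\widetilde F$ and subtracts off $b\,e^{\int f\,d\mu}$ before the hypothesis is ever invoked. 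The paper then drops $-b\,e^{\int f\,d\mu}\le 0$ and applies the hypothesis to $\int e^f\,d\mu$. (Your instinct that the $\mu$-mean versus $\nu$-mean mismatch is the delicate point is not misplaced --- even the paper's last line applies the $\mu$-hypothesis to a function centered with respect to $\nu$, without further comment --- but the variational identity is the mechanism the whole argument rests on, and without it your proposal cannot reach the form $\widetilde F(t)=1+bF(t/a)$.)
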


The argument employs the following known lemma whose proof we give for completeness.

\vskip5mm
\begin{lem} \label{lem:inf}
Let $\Phi \colon \mathbb{R} \to \mathbb{R}$ be a $C^1$-convex function, 
and let $\mu$ a probability measure on $\mathbb{R}$. For any 
$\mu$-integrable function 
$f \colon \mathbb{R} \to \mathbb{R}$, such that $\Phi(f)$ is $\mu-$integrable,
$$
\int \Phi(f)\, d\mu - \Phi \left( \int fd\mu \right)
= \inf_{ t \in \mathbb{R}} \int \left(\Phi(f)- \Phi(t) - \Phi'(t)(f-t) \right) d\mu .
$$
\end{lem}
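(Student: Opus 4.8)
The plan is to prove the identity by showing that the right-hand infimum is attained at $t = \int f\,d\mu$, at which value the integrand on the right collapses to exactly the left-hand side. Concretely, for a fixed $t \in \mathbb{R}$, write
\[
\int \left(\Phi(f) - \Phi(t) - \Phi'(t)(f-t)\right) d\mu
= \int \Phi(f)\,d\mu - \Phi(t) - \Phi'(t)\left(\int f\,d\mu - t\right),
\]
using that $\mu$ is a probability measure and $f$ is $\mu$-integrable, so the constant and linear terms integrate in the obvious way. Set $m \coloneqq \int f\,d\mu$ and denote by $g(t)$ the expression above as a function of $t$. Then $g(m) = \int \Phi(f)\,d\mu - \Phi(m)$, which is precisely the left-hand side of the claimed identity, so it remains only to check that $g(t) \geq g(m)$ for every $t$, i.e. that $g$ attains its minimum at $t = m$.

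For this, I would invoke convexity of $\Phi$ via the supporting-line (tangent-line) inequality: since $\Phi$ is $C^1$ and convex, for all $s, t \in \mathbb{R}$ one has $\Phi(s) \geq \Phi(t) + \Phi'(t)(s - t)$. Apply this with $s = m$ to get, for every $t$,
\[
\Phi(m) \geq \Phi(t) + \Phi'(t)(m - t),
\]
which rearranges exactly to $g(m) = \int \Phi(f)\,d\mu - \Phi(m) \leq \int \Phi(f)\,d\mu - \Phi(t) - \Phi'(t)(m-t) = g(t)$. Hence $g(m) = \inf_{t \in \mathbb{R}} g(t)$, and combining this with $g(m) = \int \Phi(f)\,d\mu - \Phi\left(\int f\,d\mu\right)$ gives the identity. (One could equally argue via Jensen's inequality applied to each tangent-line lower bound, but the supporting-line inequality evaluated at the single point $s=m$ is the cleanest route.)

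There is essentially no serious obstacle here; the only points requiring a moment's care are bookkeeping ones. First, one should note that the integrand $\Phi(f) - \Phi(t) - \Phi'(t)(f-t)$ is nonnegative $\mu$-a.e. by the supporting-line inequality, so the integral on the right is well-defined in $[0,\infty]$ for every $t$, and the hypothesis that $\Phi(f)$ is $\mu$-integrable (together with $\mu$-integrability of $f$) guarantees it is finite at least at $t = m$, making the infimum a genuine real number. Second, one uses $\int e^f\,d\mu$-type finiteness only implicitly: all that is needed is that $f$ and $\Phi(f)$ are $\mu$-integrable, which is assumed. I would present the argument in the order: (i) expand the integral for fixed $t$ using $\mu(\mathbb{R}) = 1$; (ii) identify the value at $t = m$; (iii) apply the supporting-line inequality at $s = m$ to conclude the minimum is at $t = m$.
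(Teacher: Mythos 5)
Your proof is correct and takes essentially the same route as the paper: expand the integral using that $\mu$ is a probability measure, then apply the supporting-line inequality $\Phi(m) \geq \Phi(t) + \Phi'(t)(m-t)$ with $m = \int f\,d\mu$ to see the infimum is attained at $t=m$. The paper's version is just a more compressed rendering of the identical argument (it labels the tangent-line bound as a ``Taylor expansion''), so there is nothing to add.
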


\begin{proof}
For $t\in\mathbb{R},$ by the Taylor expansion we have that 
\[
\int_{\mathbb{R}^n}(\Phi(f)-\Phi(t)-\Phi'(t)(f-t))d \mu = \int_{\mathbb{R}^n} \Phi(f) d\mu - \Phi(t) - \Phi'(t) \left(\int_{\mathbb{R}^n} f d\mu -t \right)
\]
\[
\geq \int_{\mathbb{R}^n} \Phi(f) d\mu - \Phi \left(\int_{\mathbb{R}^n} f d\mu \right),
\]
with equality if $t=\int_{\mathbb{R}^n}fd\mu.$

\end{proof}

\begin{proof}[Proof of Theorem \ref{th:HS}]
Let $f$ be a smooth function satisfying $\int f d\nu =0$. 
Applying Lemma \ref{lem:inf} (twice) to $\Phi(x)=e^x$, we have
\begin{align*}
\int e^f d\nu - 1
& =
\int \Phi(f)\, d\nu - \Phi \left( \int fd\nu \right) \\
& = 
\inf_{t \in \mathbb{R}} \int \left(\Phi(f)- \Phi(t) - \Phi'(t)(f-t) \right) d\nu \\
& \leq 
b\, \inf_{t \in \mathbb{R}} \int \left(\Phi(f)-\Phi(t) - \Phi'(t)(f-t)\right) d\mu
 \, = \,
b\, \left( \int e^f d\mu - \exp \left\{ \int f d\mu \right\} \right) .
\end{align*}
Here, we used the convexity of $\Phi$ which ensures that 
$\Phi(f)-\Phi(t) - \Phi'(t)(f-t) \geq 0$. 
Since $\exp \left\{ \int f d\mu \right\} \geq 0$, it follows that
$$
\int e^f d\nu \leq 
1 + b\, F \left( \int G \left( |\nabla f| \right) d\mu \right) \leq 
1 + b\, F \left( \frac{1}{a} \int G \left( |\nabla f| \right) d\nu \right).
$$
\end{proof}

\subsection{Stability by contraction}

A probability measure $\mu$ on $\mathbb R^n$ is often said to be strongly log-concave,
if it has a log-concave density with respect to the standard Gaussian measure,
that is, when $\mu(dx) = e^{-V(x)}\,d\gamma_n(x)$ with $V$ convex.

\begin{thm}[Caffarelli \cite{caffarelli2000monotonicity, caffarelli2002monotonicity}] \label{thm:Caffarelli} 
If $\mu$ is strongly log-concave on $\mathbb{R}^n$, 
then there exists a $1$-Lipschitz map $T \colon \mathbb{R}^n \to \mathbb{R}^n$ satisfying $\int h(T)\, d\gamma_n = \int h\, d\mu$ for all bounded measurable
functions $h$ on $\mathbb{R}^n$.
\end{thm}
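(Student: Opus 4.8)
The plan is to realize $T$ as the Brenier optimal transport map pushing $\gamma_n$ onto $\mu$, and to prove that this map is a contraction by a maximum principle applied to the associated Monge--Amp\`ere equation. The required identity $\int h(T)\,d\gamma_n=\int h\,d\mu$ for all bounded measurable $h$ is exactly the statement that $T_\#\gamma_n=\mu$, so by Brenier's theorem we may take $T=\nabla\varphi$ for some convex $\varphi$ on $\mathbb{R}^n$, and it then suffices to show $\nabla\varphi$ is $1$-Lipschitz, i.e. $\nabla^2\varphi\le I$. Write $\mu(dx)=e^{-W(x)}\,dx$ with $W=V+\tfrac12|x|^2+\tfrac n2\log(2\pi)$; convexity of $V$ gives the uniform convexity $\nabla^2 W\ge I$. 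The change of variables formula for $\nabla\varphi$ is the Monge--Amp\`ere equation $\varphi_n(x)=e^{-W(\nabla\varphi(x))}\det\nabla^2\varphi(x)$, which after taking logarithms reads
\begin{equation*}
W(\nabla\varphi(x))=\tfrac12|x|^2+\tfrac n2\log(2\pi)+\log\det\nabla^2\varphi(x).
\end{equation*}

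Suppose first that $\varphi$ is smooth and strictly convex. Fix a unit vector $e$ and set $g(x):=\partial_{ee}\varphi(x)=\langle\nabla^2\varphi(x)e,e\rangle$. Differentiating the log-Monge--Amp\`ere identity in the direction $e$ gives $\langle\nabla W(\nabla\varphi),\nabla^2\varphi\,e\rangle=\langle x,e\rangle+\mathrm{tr}(AB)$, where $A:=(\nabla^2\varphi)^{-1}>0$ and $B:=\partial_e(\nabla^2\varphi)$ is symmetric; differentiating once more in the direction $e$,
\begin{equation*}
\langle\nabla^2 W(\nabla\varphi)\,\partial_e\nabla\varphi,\partial_e\nabla\varphi\rangle+\langle\nabla W(\nabla\varphi),\partial_{ee}\nabla\varphi\rangle=1-\mathrm{tr}\big(ABAB\big)+\mathrm{tr}\big(A\,\partial_{ee}(\nabla^2\varphi)\big).
\end{equation*}
Evaluate this at a point $x_0$ maximizing $g$. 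The first-order condition $\nabla g(x_0)=\partial_{ee}\nabla\varphi(x_0)=0$ kills the second term on the left; the second-order condition $\nabla^2 g(x_0)=\partial_{ee}(\nabla^2\varphi)(x_0)\le 0$ together with $A>0$ forces $\mathrm{tr}\big(A\,\partial_{ee}(\nabla^2\varphi)\big)(x_0)\le 0$; and $\mathrm{tr}(ABAB)=\|A^{1/2}BA^{1/2}\|_{\mathrm{HS}}^2\ge 0$. Hence the right-hand side is $\le 1$ at $x_0$, whereas the left-hand side equals $\langle\nabla^2 W(\nabla\varphi)\,\partial_e\nabla\varphi,\partial_e\nabla\varphi\rangle\ge|\partial_e\nabla\varphi(x_0)|^2$ by $\nabla^2 W\ge I$. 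Therefore $|\partial_e\nabla\varphi(x_0)|\le 1$, so $g(x_0)=\langle\partial_e\nabla\varphi(x_0),e\rangle\le 1$. As $x_0$ maximizes $g$ and $e$ is arbitrary, $\nabla^2\varphi\le I$, i.e. $\nabla\varphi$ is $1$-Lipschitz.

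To make this rigorous for a general strongly log-concave $\mu$, I would first prove the claim for well-behaved approximations: replace $\mu$ by probability measures $\mu_k$ with smooth densities, bounded between positive constants on large balls and still satisfying $\nabla^2 W_k\ge I$ there, so that Caffarelli's regularity theory for the Monge--Amp\`ere equation yields smooth, strictly convex potentials $\varphi_k$ on the relevant region on which, after a localization argument, the maximum of $\partial_{ee}\varphi_k$ is attained. The computation above then gives $\nabla^2\varphi_k\le I$, with a bound uniform in $k$. Finally, by stability of optimal transport maps under weak convergence of the marginals, a locally uniformly convergent subsequence of the $1$-Lipschitz maps $\nabla\varphi_k$ produces in the limit a $1$-Lipschitz map $T=\nabla\varphi$ with $T_\#\gamma_n=\mu$, and $\int h(T)\,d\gamma_n=\int h\,d\mu$ follows. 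The hard part is this last, analytic step: the Brenier potential of an arbitrary strongly log-concave measure need not be twice differentiable and $\nabla^2\varphi$ need not attain its maximum over all of $\mathbb{R}^n$, so the clean maximum-principle argument only makes literal sense after the regularization, and one must carry all estimates through uniformly so that the contraction property survives the limit.
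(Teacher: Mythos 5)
The paper does not prove this statement at all: it is quoted as an external result of Caffarelli, with the proof delegated to the cited references. What you have written is a sketch of Caffarelli's own maximum-principle argument, and at the formal level it is correct. The pointwise computation is right: the Monge--Amp\`ere equation $W(\nabla\varphi)=\tfrac12|x|^2+\tfrac n2\log(2\pi)+\log\det\nabla^2\varphi$, differentiated twice in a fixed direction $e$ and evaluated at an interior maximum of $g=\partial_{ee}\varphi$, gives $\langle\nabla^2W(\nabla\varphi)\,\nabla^2\varphi e,\nabla^2\varphi e\rangle\le 1$ (using $\nabla g(x_0)=0$, $\mathrm{tr}(A\nabla^2g)(x_0)\le0$ and $\mathrm{tr}(ABAB)\ge0$), and combined with $\nabla^2W\ge I$ (which is exactly strong log-concavity, i.e. convexity of $V$) and Cauchy--Schwarz this yields $g(x_0)\le1$, hence $\nabla^2\varphi\le I$. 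You also correctly identify where the real work lies: the Brenier potential of a general strongly log-concave target need not be $C^2$, and $\partial_{ee}\varphi$ need not attain its supremum on $\mathbb{R}^n$, so the argument only becomes a proof after a regularization and localization scheme with estimates uniform enough to pass to the limit. That scheme is the actual content of Caffarelli's papers (and of the later streamlined treatments), and your proposal gestures at it without carrying it out; as a self-contained proof it is therefore incomplete at precisely that step, but as a reconstruction of the cited theorem's proof strategy it is accurate and contains no wrong turns.
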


In other words, $T$ pushes forward $\gamma_n$ to $\mu$, or $\mu$ appears as
image of $\gamma_n$ under $T$. With the help of this theorem, we obtain 
the following elementary, but useful result.

\begin{prop}
Let $F,G \colon [0, \infty) \to [0,\infty)$ be non-decreasing and continuous, and let 
$\mu$ be a strongly log-concave probability measure on $\mathbb{R}^n$. 
Assume that for all smooth $f$ with $\int fd\gamma_n=0,$ it holds that
$$
\int e^f d\gamma_n \leq F \left( \int G \left( |\nabla f| \right) d\gamma_n \right),
$$
then
$$
\int e^g d\mu \leq F \left( \int G \left( |\nabla g| \right) d\mu \right)
$$
for all smooth $g$ with $\int g d\mu =0$.
\end{prop}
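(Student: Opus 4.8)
The plan is to transport the inequality from $\gamma_n$ to $\mu$ by means of the Caffarelli map. By Theorem~\ref{thm:Caffarelli} there is a $1$-Lipschitz map $T\colon\mathbb R^n\to\mathbb R^n$ pushing $\gamma_n$ forward to $\mu$; by monotone convergence, resp.\ by splitting into positive and negative parts, the identity $\int h(T)\,d\gamma_n=\int h\,d\mu$ extends from bounded measurable $h$ to all non-negative, resp.\ all $\mu$-integrable, $h$. Fix a smooth $g$ with $\int g\,d\mu=0$ and put $f:=g\circ T$. Applying this identity to $h=g$ and to $h=e^g$ gives $\int f\,d\gamma_n=0$ and $\int e^f\,d\gamma_n=\int e^g\,d\mu$. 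Since $T$ is Lipschitz it is differentiable a.e.\ (Rademacher's theorem), with $\|DT(x)\|_{\mathrm{op}}\le1$ at every point of differentiability, so the chain rule gives $\nabla f(x)=DT(x)^{*}\nabla g(T(x))$ and hence $|\nabla f(x)|\le|\nabla g(T(x))|$ for a.e.\ $x$. As $G$ is non-decreasing, $G(|\nabla f|)\le G(|\nabla g|\circ T)$ a.e., and integrating against $\gamma_n$ and using the push-forward identity once more,
\[
\int G(|\nabla f|)\,d\gamma_n\le\int G(|\nabla g|\circ T)\,d\gamma_n=\int G(|\nabla g|)\,d\mu .
\]
Granting that the hypothesis may be applied to $f$, the monotonicity of $F$ now yields
\[
\int e^g\,d\mu=\int e^f\,d\gamma_n\le F\Bigl(\int G(|\nabla f|)\,d\gamma_n\Bigr)\le F\Bigl(\int G(|\nabla g|)\,d\mu\Bigr),
\]
which is the claim. (We may assume the right-hand side is finite, the general case following from the same truncation below, or read with the convention $F(+\infty):=\sup_t F(t)$.)

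The one point requiring care, and the main obstacle, is that $f=g\circ T$ is merely locally Lipschitz, whereas the hypothesis is stated for smooth functions; removing it is also where the continuity of $F$ and $G$ enters. When $\nabla g$ is bounded, $f$ is globally Lipschitz, and mollifying, $f_\varepsilon:=f*\rho_\varepsilon$ is smooth with $f_\varepsilon\to f$ locally uniformly and $\nabla f_\varepsilon=(\nabla f)*\rho_\varepsilon\to\nabla f$ a.e., all under the uniform bound $\|\nabla f_\varepsilon\|_\infty\le\|\nabla f\|_\infty\le\|\nabla g\|_\infty$; applying the hypothesis to $f_\varepsilon-\int f_\varepsilon\,d\gamma_n$ and letting $\varepsilon\to0$ (dominated convergence on both sides, continuity of $F$) gives the inequality for such $g$. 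A general smooth $g$ with $\int g\,d\mu=0$ is then reached by approximating it by smooth functions of bounded gradient that agree with $g$ on larger and larger balls, passing to the limit via Fatou's lemma on the left and dominated convergence on the right. Alternatively, one notes that the argument of the first paragraph is fully rigorous as soon as $T$ itself is smooth (e.g.\ for $\mu=e^{-V}d\gamma_n$ with $V$ smooth, by Caffarelli's regularity theory for the optimal map), and then treats a general strongly log-concave $\mu$ by mollifying $V$ into smooth convex potentials $V_j$ (for which $V_j\ge V$, so $e^{-V_j}d\gamma_n$ renormalizes to a probability measure $\mu_j\to\mu$) and letting $j\to\infty$, using dominated convergence to pass to the limit in $\int e^g\,d\mu_j\le F(\int G(|\nabla g|)\,d\mu_j)$ for the fixed smooth $g$.

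Thus the whole content of the proposition is carried by the two structural facts furnished by Caffarelli's theorem: the exact intertwining $\int e^{g\circ T}\,d\gamma_n=\int e^{g}\,d\mu$ and the contraction estimate $|\nabla(g\circ T)|\le|\nabla g|\circ T$. Everything else is soft: the only mildly delicate step is the approximation needed to pass from smooth to locally Lipschitz test functions for $\gamma_n$, and it involves no new idea beyond standard mollification and truncation.
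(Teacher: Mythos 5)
Your proposal is correct and follows essentially the same route as the paper: transport via Caffarelli's $1$-Lipschitz map $T$, the push-forward identities $\int e^{g\circ T}\,d\gamma_n=\int e^g\,d\mu$, the contraction bound $|\nabla(g\circ T)|\le|\nabla g|\circ T$, and monotonicity of $F$ and $G$. The only difference is that you spell out the mollification/truncation needed to extend the Gaussian hypothesis from smooth to locally Lipschitz test functions, a step the paper compresses into "the hypothesis is extended to all locally Lipschitz functions" together with a standard reduction to bounded $g$.
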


\begin{proof}
The hypothesis about $\gamma_n$ is extended to all locally Lipschitz
functions $f$ with $\gamma_n$-mean zero, in which case the modulus of 
the gradient may be defined by
$$
|\nabla f(x)| = \limsup_{y \rightarrow x} \frac{|f(x) - f(y)|}{|x-y|}, \quad
x \in \mathbb R^n.
$$
Let $T$ be the Lipschitz map from Theorem \ref{thm:Caffarelli}, and let $g$ be
a smooth function with $\mu$-mean zero. Note that by a standard approximation argument, we can assume that $g$ is bounded. Then $f=g(T)$ is locally Lipschitz
and satisfies, for all $x \in \mathbb R^n$,
\begin{eqnarray} \nonumber
|\nabla f(x)| 
 & = & 
\limsup_{y \rightarrow x}\, \frac{|g(Tx) - g(Ty)|}{|x-y|} \\ \nonumber
 & = &
\limsup_{y \rightarrow x}\,\bigg[\frac{|g(Tx) - g(Ty)|}{|Tx-Ty|} \,
\frac{|Tx - Ty|}{|x-y|}\bigg] \, \leq \, |\nabla g|(Tx).
\end{eqnarray}
Applying the exponential inequality to $f$ with respect to $\gamma_n$, we get
\begin{eqnarray} \nonumber
\int e^g d\mu 
 & = &
\int e^{f} d\gamma_n \, \leq \,
F\left(\int G\left(|\nabla f|\right) d\gamma_n \right) \\ \nonumber
 & \leq &
F \left( \int G \left( |\nabla g| (T)\right) d\gamma_n  \right) \, = \,
F \left( \int G \left( |\nabla g| \right) d\mu  \right).
\end{eqnarray}
\end{proof}

\subsection{Zero mean versus zero median}
While the inequality \eqref{eq:exp} appears under the condition 
$\int f\,d\mu=0$, it is useful to know if one may replace with
a similar condition such as $m_f=0$, where 
$$
m_{f}:=\mathrm{inf}\left\{t\in\mathbb{R}: \mu(\{x\in\mathbb{R}^{n}: f(x)>t\})\leq \frac{1}{2}\right\}
$$ 
denotes the maximal median of $f$
under the measure $\mu.$  Here we describe one specific example,
in which the answer is affirmative. Namely, consider 
$$
F(t)=e^t \quad \mbox{and} \quad G(t) = \frac{1}{1+t}\,e^{t^2/2}, 
$$
therefore dealing with the inequality \eqref{eq:IR} (but the computations 
to come can easily be adapted to many other examples, including $F$ and $G$ 
as in \eqref{eq:bg} for instance).

Recall that a probability measure $\mu$ on $\mathbb{R}^n$ is said to satisfy the Maz'ya-Cheeger inequality if there exists a constant $c \in (0,\infty)$ such that 
for all $f$ smooth,
\begin{equation} \label{eq:cheeger}
\int |f - m_f|\, d\mu \leq c \int |\nabla f| d\mu.
\end{equation}
As is well-known (see \cite{maz'ja60, maz2011}), this hypothesis may be equivalently stated in terms of the 
isoperimetric-type inequality
$$
\min\{\mu(A),1-\mu(A)\} \leq c \mu^+(A),
$$
relating the $\mu$-perimeter 
$\mu^+(A) = \liminf_{\ep \downarrow 0}\,(\mu(A_\ep) - \mu(A))/\ep$
to the $\mu$-size of an arbitrary Borel set $A$ in $\mathbb R^n$
(where $A_\ep$ denotes an $\ep$-neighborhood of $A$). 
It is stronger than a Poincar\'e-type, although in general it is not comparable to  the log-Sobolev inequality for the measure $\mu$

Now assume that, for some $a \in (0,\infty)$,
$$
\log \int e^f d\mu \leq a \int G(|\nabla f|)\, d\mu  
\quad \mbox{for all } f \mbox{ with } \int fd\,\mu=0.
$$
Consider a smooth function $g$ with $m_g = 0$.
Applying the latter inequality to $f = g - \int gd\mu$, we get
$$
\log \left( \int e^g d\mu \right) \leq  
\int g\, d\mu + a  \int G \left( |\nabla g|\right) d\mu .
$$
By Maz'ya-Cheeger's inequality \eqref{eq:cheeger},
$$
\int g d\mu \leq \int |g-m_g\,|\, d\mu \leq c \int |\nabla g|\, d\mu.
$$
Therefore, since $t \leq 2e^{t^2/2}/(1+t)$ for all $t \geq 0$, we end up with
$$
\log \left( \int e^g d\mu \right) 
\leq 
(a+2c)  \int G \left( |\nabla g|\right) d\mu.
$$

The same argument in fact shows that the two conditions are equivalent in \eqref{eq:IR}, at the expense of some loss in the constant.

\subsection{Optimality in \eqref{eq:IR}}

Recall that the inequality \eqref{eq:IR} states that, for any $f$ 
with $\int f d\gamma_n = 0$,
$$
\log \left( \int e^f d\gamma_n \right) \leq 
10 \int \frac{e^{|\nabla f|^2/2}}{1+|\nabla f|} d\gamma_n
$$
We will develop an alternative approach, based on a dual convexity argument, 
leading to
$$
\log \left( \int e^f d\gamma_n \right) 
\leq 
8\int \frac{e^{|\nabla f|^2/2}}{\sqrt{1+(|\nabla f|^2/2)}} d\gamma_n .
$$
Let us show that such choice of the function 
$G(t)=e^{t^2/2}/\sqrt{1+t^2/2}$ appearing in the integrand on the right-hand side
is essentially optimal. Restricting ourselves to the one dimensional case,
suppose that, for all bounded, locally Lipshitz functions $f$ on the real line,
\begin{equation} \label{eq:expbis}
\log \left( \int e^f d\gamma \right) 
\leq 
\int f d\gamma + 
F \left( \int \frac{e^{{f'}^2/2}}{H\left( {f'} \right)} d\gamma \right) 
\end{equation}
for some increasing function $H \geq 1$, where $\gamma = \gamma_1$, and
$F$ is non-decreasing. For the particular functions
$f_N(x)= \frac{1}{2}\,(x \wedge N)^2$, the left-hand side satisfies
$$
\log \left( \int e^{f_N} d\gamma \right) \geq 
\log \left( \int_{-N}^N \frac{dx}{\sqrt{2\pi}} \right)  \geq \log(2N) -1,
$$
while the right-hand side can be bounded above by
$$
\frac{1}{2} + F \left( 2\int_0^N \frac{1}{H(t)}\, dt + 1 \right),
$$
since $\int f_N d\gamma \leq \frac{1}{2} \int x^2 d\gamma = \frac{1}{2}. $ Therefore, the inequality \eqref{eq:expbis} cannot hold if $1/H$ is integrable 
at infinity. This excludes, for example, $H$ behaving like 
$t (\log t)^{1+\varepsilon}$ for large $t$. Therefore, heuristically 
the biggest admissible function $H$ lies "between" $t$ and $t \log t$ at infinity.
Observe that the above example does not exclude an inequality of the type
$$
\log \log \left( \int e^f d\gamma \right) 
\leq 
F \left( \int \frac{e^{{f'}^2/2}}{(1+|f'|) \log (1+|f'|)} d\gamma \right)
$$
that we don't know if it could hold, or not.

On the other hand, for $H(t) = 1+|t|$, the above example shows that 
$F(t) \geq t$ in the large, and therefore the logarithm is needed in 
the left hand side of \eqref{eq:IR}, in contrast with the inequality \eqref{eq:bg}.

To conclude this section, we observe that for the special choice $H(x)=1$, linear 
perturbation of the quadratic example used above shows that
an inequality of the form
$$
\int e^f d\gamma \leq \left( \int e^{{f'}^2/2} d\gamma \right)^2 
$$
could hold for all $f$ with mean zero with respect to the Gaussian measure.

\subsection{Reduction to dimension 1}

In this section we show that, under mild assumption on $F$ and $G$, the inequality 
\eqref{eq:exp} for the standard Gaussian measure $\gamma_n$ in dimension $n$ 
holds if and only if it holds in dimension 1 for $\gamma_1$. 
Therefore, by essence this inequality for the standard Gaussian is one dimensional. 
To achieve this, one could use the localization technique
of Lov\'asz and Simonovits \cite{Lovasz} (see also \cite{KLS,guedon}). 
Here we will instead use the Gaussian Rearrangement technique of 
Ehrhard \cite{ehrhard, ASENS_1984_4_17_2_317_0}.

\begin{prop} \label{prop:product}
Let $F, G \colon [0, \infty) \to [0,\infty)$ be non-decreasing. Assume that 
$G$ is convex and that $f$ is smooth. Then, the exponential inequality 
$$
\int e^f d\gamma_n \leq  F \left( \int G \left( |\nabla f| \right) d\gamma_n \right), 
\quad \int f d\gamma_n = 0,
$$
holds in any dimension if and only if it holds in dimension $n=1$
for the class of all non-decreasing smooth functions $f$.
\end{prop}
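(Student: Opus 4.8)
The plan is to prove the nontrivial direction (dimension $1$ for non-decreasing $f$ $\Rightarrow$ arbitrary dimension) by an Ehrhard-type symmetrization argument, reducing a general smooth $f$ on $\mathbb R^n$ to a one-dimensional non-decreasing profile. First I would recall Ehrhard's Gaussian rearrangement: to a smooth $f$ on $\mathbb R^n$ with $\gamma_n$-mean zero one associates the non-decreasing function $f^* \colon \mathbb R \to \mathbb R$ on the line, defined so that $f^*$ is equimeasurable with $f$, i.e. $\gamma_1(\{f^* > t\}) = \gamma_n(\{f > t\})$ for every $t$. Equimeasurability gives immediately that $\int e^{f^*}\,d\gamma_1 = \int e^f\,d\gamma_n$ and $\int f^*\,d\gamma_1 = \int f\,d\gamma_n = 0$, so the left-hand side and the mean-zero normalization are preserved exactly. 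The content of Ehrhard's inequality is the gradient side: the rearrangement does not increase the relevant Dirichlet-type energy, and more precisely one has a pointwise/integrated comparison of the form $\int \Psi(|(f^*)'|)\,d\gamma_1 \le \int \Psi(|\nabla f|)\,d\gamma_n$ for convex increasing $\Psi$ vanishing at $0$, or at least the co-area version $\gamma_1^+(\{f^* > t\}) \le \gamma_n^+(\{f>t\})$ together with a rearrangement of the gradient. Applying this with $\Psi = G$ (here the hypothesis that $G$ is convex and non-decreasing is exactly what is needed) yields $\int G(|(f^*)'|)\,d\gamma_1 \le \int G(|\nabla f|)\,d\gamma_n$. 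Since $F$ is non-decreasing, combining the three facts gives
\[
\int e^f\,d\gamma_n = \int e^{f^*}\,d\gamma_1 \le F\Big(\int G(|(f^*)'|)\,d\gamma_1\Big) \le F\Big(\int G(|\nabla f|)\,d\gamma_n\Big),
\]
where the middle inequality is the assumed one-dimensional statement applied to the non-decreasing smooth (after a routine approximation) function $f^*$. The converse direction is trivial: a function on $\mathbb R^1$ extends to $\mathbb R^n$ depending on the first coordinate only, with $|\nabla f| = |f'|$ and all integrals against $\gamma_n$ equal to those against $\gamma_1$, so the $n$-dimensional inequality restricted to such $f$ is literally the $1$-dimensional one.

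The main obstacle is making the gradient comparison rigorous in the precise form the conclusion demands. Ehrhard's classical results are usually phrased for the perimeter functional or for $\int |\nabla f|\,d\gamma_n$, whereas here I need the comparison for a general convex $G(|\nabla f|)$. The clean route is via the layer-cake/co-area formula: write $\int G(|\nabla f|)\,d\gamma_n$ through its level sets and use that Ehrhard symmetrization is perimeter-nonincreasing on each level set $\{f > t\}$, i.e. it replaces $\{f>t\}$ by a half-line $(a(t),\infty)$ with $\gamma_1(a(t),\infty) = \gamma_n(\{f>t\})$ and $\gamma_1^+$ of that half-line $\le \gamma_n^+(\{f>t\})$ (the Gaussian isoperimetric inequality in Ehrhard's sharp form). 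One then has to pass from this level-set inequality, plus the distributional identity $|(f^*)'|(x) =$ (the reciprocal of the "spread" of the level sets of $f$), to the claimed integrated bound for convex $G$; this is where convexity of $G$ enters, via Jensen applied on each level set to compare the average of $G(|\nabla f|)$ over $\{f = t\}$ with $G$ of the appropriate average. I would handle the smoothness and non-degeneracy issues (level sets of measure zero, critical points of $f$, non-smoothness of $f^*$) by a standard approximation argument, first establishing the inequality for $f$ with only finitely many critical values, or by invoking the version of Ehrhard's theorem already available in \cite{ehrhard, ASENS_1984_4_17_2_317_0} in the form $\int G(|\nabla f^*|)\,d\gamma_1 \le \int G(|\nabla f|)\,d\gamma_n$ for convex $G$, which is precisely the ingredient stated as needed.

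Finally I would remark that the restriction to \emph{non-decreasing} $f$ in the one-dimensional statement is not a real loss: the rearranged profile $f^*$ is automatically non-decreasing, so verifying the one-dimensional inequality on that subclass suffices to run the argument, and conversely the one-dimensional inequality for all $f$ would follow from the $n=1$ case anyway. Thus the proposition says exactly that the whole family of inequalities \eqref{eq:exp} for the Gaussian is governed by its one-dimensional, monotone instance, which is what makes the later dual-convexity analysis on the half-line (with Hardy-type constraints $f(0)=0$) the right object to study.
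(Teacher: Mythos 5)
Your proposal is correct and follows essentially the same route as the paper: Ehrhard's Gaussian rearrangement $f^*$, equimeasurability to preserve $\int e^f$ and the mean-zero constraint, and the Gaussian P\'olya--Szeg\H o inequality $\int G(|(f^*)'|)\,d\gamma_1 \le \int G(|\nabla f|)\,d\gamma_n$ for convex non-decreasing $G$, followed by monotonicity of $F$; the paper simply invokes the P\'olya--Szeg\H o step directly from \cite{ehrhard,ASENS_1984_4_17_2_317_0} rather than sketching the co-area/Jensen derivation as you do.
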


\begin{remark}
The same result holds if one replaces the left integral $\int e^f d\gamma$ 
by $\int H(f)\,d\gamma_n$, where $H$ is non-negative.   
\end{remark}

\begin{proof}
The Gaussian rearrangement of a Borel set $A$ of $\mathbb{R}^n$ is 
the half space 
$$
A^*= \{x_1 > \lambda \} \subset \mathbb{R}^n, 
$$
where the number $\lambda$ is chosen so that $A$ and $A^*$ have the same Gaussian 
measure $\gamma_n(A)=\gamma_n(A^*)=\gamma_1([x_1,\infty))$. Put 
$\Phi(x)=\int_{-\infty}^x \varphi(t)\,dt$, where $\varphi$ denotes 
the standard normal density. More generally, we define the Gaussian rearrangement 
of a measurable function $f \colon \mathbb{R}^n \to \mathbb{R}$
as a non-decreasing function
$$
f^*(x) \coloneqq \inf \{ a \in \mathbb{R}: 
\Phi(x_1) \leq \gamma_n(\{f \leq a \}) \} , \quad x=(x_1,x_2,\dots,x_n) \in \mathbb{R}^n.
$$
It is clear that $f^*$ depends only on the first coordinate. With a slight 
abuse of notation we will write $f^*$ also for the function defined on the real 
line, i.e. $f^*(x_1)=f^*(x_1,x_2,\dots,x_n)$ for an arbitrary 
choice of $x_2, \dots,x_n$. The main feature of the Gaussian rearrangement is 
that $f$ and $f^*$ are equimeasurable, therefore 
$$
\int_{\mathbb{R}^n} e^f d\gamma_n = \int_\mathbb{R}  e^t \gamma_n(\{f>t\})dt= \int_\mathbb{R} e^t \gamma_n(\{f^*>t\})dt= \int_{\mathbb{R}^n} e^{f^*} d\gamma_n 
= \int_{\mathbb{R}} e^{f^*} d\gamma_1.
$$
Moreover, for any non-decreasing convex function 
$G \colon [0,\infty) \to \mathbb{R}$, the following P\'olya-Szego type inequality
$$
\int_{\mathbb{R}^n} G(|\nabla f|)\, d\gamma_n \geq 
\int_{\mathbb{R}^n} G(|\nabla f^*|)\, d\gamma_n = 
\int_{\mathbb{R}} G(|{f^*}'|)\, d\gamma_1 
$$
holds true for any smooth $f$. Since
$\int f d\gamma_n = \int f^* d\gamma_1 = 0$, assuming that the exponential 
integral inequality holds in dimension $n=1$, we get
$$
\int_{\mathbb{R}^n} e^f d\gamma_n \, = \,
\int_{\mathbb{R}} e^{f^*} d\gamma_1 \, \leq \,
F \left( \int_{\mathbb{R}} G(|{f^*}'|)\, d\gamma_1 \right) \, \leq \,
F \left( \int_{\mathbb{R}^n} G(|\nabla f|)\, d\gamma_n \right),
$$
by the monotonicity of $F$ in the last step.
\end{proof}

%%%%%%%%%%%%%%%%%%%%%%%%%%%%%%%%%%%%%%%%%%%%%%%%%%%%%%
%%%%%%%%%%%%%%%%%%%%%%%%%%%%%%%%%%%%%%%%%%%%%%%%%%%%%

\subsection{Reduction to the half line, monotone functions, and 
a Hardy-type inequality}

The aim of this section is to show that one can relate the one-dimensional 
version of the inequality \eqref{eq:exp} to an inequality of Hardy type 
on the half-line. Furthermore, at the expense of some loss in the constants, 
it is proved that in the one dimensional version of \eqref{eq:exp} 
one can restrict oneself to monotone functions.
For simplicity, we will only deal with symmetric probability measures.
We will use the following notation: given a function $F$ on $[0,\infty)$,
\begin{equation} \label{eq:F2}
F_2(x) := \sup
\big\{F(x_1) + F(x_2): x_1,x_2 \geq 0, \ x_1 + x_2 = x\big\}, \quad x \geq 0.
\end{equation}
If $F$ is non-decreasing, then $F_2(x) \leq 2 F(x)$, and if in addition $F$ 
is convex, then necessarily $F_2(x) = F(x)+F(0)$ for all $x \geq 0$.

We say that $\mu$ (on the line) satisfies Hardy's inequality if
\begin{equation} \label{eq:hardy}
\int_0^\infty |f-f(0)|\, d\mu \leq A \int_0^\infty |f'|\, d\mu  
\end{equation}
for $C^1$ functions $f$ on the half-line with some constant $A \in (0,\infty)$.

\begin{lem} \label{lem:reduction}
Let $F, G \colon [0, \infty) \to [0,\infty)$ be non-decreasing and let 
$\mu$ be a symmetric probability measure on the line. Assume that for 
all non-decreasing smooth enough functions $f \colon [0,\infty) \to \mathbb{R}$, 
\begin{equation} \label{eq:reduction}
\int_0^\infty e^f d\mu - \frac{1}{2}e^{f(0)}  
\leq  
e^{f(0)}  F \left( \int_0^\infty G(|f'|)\, d\mu \right) .
\end{equation}
Then, for all smooth $g \colon \mathbb{R} \to \mathbb{R}$ with $\int G(|g'|)d\mu<\infty,$
$$
\int_{-\infty}^\infty e^g\, d\mu - 
\exp\Big\{\int_{-\infty}^\infty g\, d\mu\Big\}
\leq e^{g(0)}\, F_2 \left(\int_{-\infty}^\infty G(|g'|)\, d\mu \right) .
$$
Moreover, if $\mu$ satisfies the Hardy-type inequality \eqref{eq:hardy}, then, 
for all $g$ with $\int_{-\infty}^\infty g\, d\mu = 0$, 
\begin{equation} \label{eq:final}
\int_{-\infty}^\infty e^g d\mu \leq  1 +
\exp\Big\{A\int_{-\infty}^\infty |g'|\,d\mu\Big\}\, F_2 \left( \int_{-\infty}^\infty G(|g'|) d\mu \right) .
\end{equation}
\end{lem}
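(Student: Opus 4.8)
The plan is to reduce a two-sided (full-line) function $g$ to two monotone half-line pieces, apply the hypothesis \eqref{eq:reduction} to each, and then recombine, using the definition of $F_2$ in \eqref{eq:F2} to account for the two contributions. First I would split the real line at the origin: write $g_+ = g|_{[0,\infty)}$ and $g_- (x) = g(-x)$ for $x \geq 0$, both viewed as functions on $[0,\infty)$. These need not be monotone, so the genuinely first step is a monotone-rearrangement reduction on the half-line: replace $g_{\pm}$ by their non-decreasing rearrangements with respect to the (renormalized) restriction of $\mu$ to $[0,\infty)$. Since $\mu$ is symmetric, $\mu$ gives mass $1/2$ to each half-line; equimeasurability preserves $\int_0^\infty e^{g_\pm}\,d\mu$, and a one-dimensional Pólya–Szegő argument (as used in Proposition \ref{prop:product}, valid since $G$ is non-decreasing here — though note that for this step one wants $G$ convex, so I would either assume that or argue that rearrangement on the line does not increase $\int G(|f'|)$) controls $\int_0^\infty G(|g'_\pm|)\,d\mu$ from above. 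A subtlety: after rearrangement the two pieces may no longer agree at $0$, i.e. the rearranged $g_+$ and $g_-$ need not have the same value at the origin; one handles this by tracking $e^{g_+(0)}$ and $e^{g_-(0)}$ separately and bounding them both by $e^{g(0)}$ only after a further adjustment — more cleanly, one first subtracts off a constant so that $g(0)=0$, runs the argument, and adds it back.

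With the monotone pieces in hand I would apply the hypothesis \eqref{eq:reduction} to each of $g_+$ and $g_-$ (after the renormalization $\mu \to 2\mu|_{[0,\infty)}$, which just rescales the integrals — care is needed here since $\int_0^\infty(\cdot)\,d\mu$ in \eqref{eq:reduction} is against $\mu$ itself, not the normalized measure, so in fact one applies \eqref{eq:reduction} directly with $\mu$ and the half-line integrals as written). Adding the two resulting inequalities gives
\[
\int_{-\infty}^\infty e^g\,d\mu - e^{g(0)} \;\leq\; e^{g(0)}\Big(F\big(t_+\big) + F\big(t_-\big)\Big),
\]
where $t_\pm = \int_0^\infty G(|g'_\pm|)\,d\mu$ and the two ``$-\tfrac12 e^{g(0)}$'' terms combine to give the single $-e^{g(0)}$ on the left. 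Since $t_+ + t_- \leq \int_{-\infty}^\infty G(|g'|)\,d\mu =: t$, the definition \eqref{eq:F2} of $F_2$ yields $F(t_+)+F(t_-) \leq F_2(t)$, giving the first displayed conclusion
\[
\int_{-\infty}^\infty e^g\,d\mu - e^{g(0)} \leq e^{g(0)} F_2\Big(\int_{-\infty}^\infty G(|g'|)\,d\mu\Big);
\]
here I have used $e^{g(0)} \geq \exp\{\int g\,d\mu\}$? — no, that inequality can fail, so instead I keep $\exp\{\int g\,d\mu\}$ on the left by replacing $g$ with $g - \int g\,d\mu$ at the very start (this shifts $e^{g(0)}$ to $e^{g(0)}\exp\{-\int g\,d\mu\}$ on both sides and leaves $|g'|$, hence $t$, unchanged), which is exactly the stated form.

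For the final inequality \eqref{eq:final}, assume $\int g\,d\mu = 0$. Then the first part gives $\int e^g\,d\mu \leq e^{g(0)}\big(1 + F_2(t)\big) \leq e^{g(0)} + e^{g(0)}F_2(t)$, so it remains to bound $e^{g(0)}$. I would do this by controlling $|g(0)|$ via the mean: since $\int g\,d\mu = 0$, one has $|g(0)| = |g(0) - \int g\,d\mu| \leq \int |g(0)-g|\,d\mu$. Splitting this integral over the two half-lines and applying Hardy's inequality \eqref{eq:hardy} to each piece (using symmetry of $\mu$, and that $g(0)$ plays the role of the boundary value $f(0)$ in \eqref{eq:hardy}), one gets $\int_0^\infty |g-g(0)|\,d\mu \leq A \int_0^\infty |g'|\,d\mu$ and likewise on $(-\infty,0]$, hence $|g(0)| \leq A\int_{-\infty}^\infty |g'|\,d\mu$, so $e^{g(0)} \leq \exp\{A\int |g'|\,d\mu\}$. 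Substituting and noting $\int e^g\,d\mu \leq e^{g(0)} + e^{g(0)}F_2(t) \leq 1\cdot\exp\{A\int|g'|\} \cdot(\dots)$ — more precisely $e^{g(0)} \leq \exp\{A\int|g'|\}$ bounds the first summand by $1$ only if $\int|g'|=0$; the honest bound is $\int e^g d\mu \leq 1 + \exp\{A\int|g'|\,d\mu\} F_2(t)$ once one observes that the ``$1$'' comes from the $-\exp\{\int g\,d\mu\} = -1$ already subtracted, i.e. $\int e^g\,d\mu - 1 \leq e^{g(0)}F_2(t) \leq \exp\{A\int|g'|\,d\mu\}\,F_2(t)$. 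This is exactly \eqref{eq:final}.

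\textbf{Main obstacle.} The delicate point is the monotone-rearrangement step: making sure that (i) rearranging each half-line piece does not increase $\int G(|g'|)$ (this is where convexity of $G$ is really used, and one should check whether the lemma's stated hypotheses suffice or whether, as in Proposition \ref{prop:product}, convexity of $G$ must be invoked), and (ii) the boundary values at $0$ are handled consistently — the rearranged pieces want to be ``anchored'' at their minimum, which sits at $x=0$, and one must verify that after normalizing $g(0)=0$ this minimum value is $\leq 0$ for the increasing rearrangement, so that the factor $e^{g(0)}=1$ on the right of \eqref{eq:reduction} is not exceeded. Everything else is bookkeeping with $F_2$ and Hardy's inequality.
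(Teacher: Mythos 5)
There are two genuine gaps in your proposal, both at steps you yourself flag as delicate, and neither closes under the lemma's hypotheses. First, the monotone-rearrangement reduction does not work here: the lemma assumes only that $G$ is non-decreasing, and $\mu$ is an arbitrary symmetric probability measure, so no P\'olya--Szeg\H{o} inequality is available (that device, as in Proposition \ref{prop:product}, needs both convexity of $G$ and the specific isoperimetric structure of the Gaussian measure); moreover the rearranged pieces are anchored at their infimum rather than at $g(0)$, which you do not resolve. The paper's proof avoids rearrangement entirely: on each half-line it majorizes $g$ by the explicit non-decreasing function $f_+(x)=\int_0^x g'(t)\mathds{1}_{g'(t)>0}\,dt$ (and similarly $f_-$ from the negative part of $g'$ on the left half-line). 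These satisfy $f_\pm(0)=0$, $f_\pm\geq 0$, $g\leq f_+$ on $[0,\infty)$, and $|f_\pm'|\leq |g'|$ pointwise, so $G(f_\pm')\leq G(|g'|)$ using only monotonicity of $G$. This is the key idea your proposal is missing.

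Second, your recombination produces $\int e^g\,d\mu - e^{g(0)}$ on the left, whereas the stated conclusion has $\int e^g\,d\mu - \exp\{\int g\,d\mu\}$, and these are not comparable in the needed direction. Your proposed fix (replace $g$ by $g-\int g\,d\mu$) does not help: the inequality $\int e^g\,d\mu - e^{g(0)}\leq e^{g(0)}F_2(\cdot)$ is invariant under adding constants to $g$, so the mismatch persists; and your derivation of \eqref{eq:final} then implicitly assumes the first conclusion in its stated form, which is circular. The paper handles this with Lemma \ref{lem:inf} applied to $\Phi=\exp$ at $t=g(0)=0$: this produces exactly $\int e^g\,d\mu-\exp\{\int g\,d\mu\}$ on the left at the cost of extra linear terms $-\int_0^\infty g\,d\mu$ and $-\int_{-\infty}^0 g\,d\mu$, which are then absorbed because the majorants $f_\pm$ are non-negative. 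Your Hardy step for \eqref{eq:final} is essentially the paper's, but it only becomes valid once the first conclusion is established in the correct form.
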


\begin{remark} 
{\rm By symmetry, $\mu(-\infty,0)=\mu(0,\infty)=1/2$. This explains 
the factor $1/2$ appearing in \eqref{eq:reduction}. Observe also that 
this inequality is invariant by changing $f$ into $f+c$ for any constant $c$.
The inequality \eqref{eq:reduction} can be seen as an exponential Hardy-type 
inequality.}
\end{remark}

\begin{proof}
Fix $g \colon \mathbb{R} \to \mathbb{R}$ and assume without loss of 
generality that $g(0)=0$. Applying Lemma \ref{lem:inf} with $\Phi(x)=e^x$ 
and $t=g(0)=0$, we have
\begin{align*}
\int_{-\infty}^\infty e^g d\mu 
- \exp\Big\{\int_{-\infty}^\infty g\, d\mu\Big\}
\leq 
\left( \int_0^\infty e^g d\mu - \frac{1}{2} - 
\int_0^\infty g\, d\mu  \right) +
\left( \int_{-\infty}^0 e^g d\mu - \frac{1}{2} - 
\int_{-\infty}^0 g\, d\mu \right)
\end{align*}
Define
$$
f_+(x) := \int_0^x g'(t)\, \mathds{1}_{g'(t)>0}\, dt ,
\quad 
f_-(x) := -\int_{-x}^0 g'(t)\, \mathds{1}_{g'(t)<0}\, dt, \quad x \geq 0 .
$$
By construction, both $f_+$ and $f_-$ are non-negative, non-decreasing, 
and satisfy $g \leq f_+$ on the positive axis, while $g(-x) \leq f_-(x)$ for all $x \geq 0$. Furthermore, $f'_+(x) \leq |g'(x)|$ and $f_-'(x) \leq |g'(-x)|$, $x \geq 0$, and for any measurable function $H$ on the line, by symmetry of $\mu$, $\int_{-\infty}^0 Hd\mu=\int_0^\infty H(-x) d\mu$.
Therefore, since $x \mapsto e^x-x$ is increasing on $(0,\infty)$, we have
\begin{align*}
\int_{-\infty}^\infty e^g d\mu 
- e^{\int_{-\infty}^\infty g d\mu}
& \leq 
\left( \int_0^\infty e^{f_+} d\mu - \frac{1}{2} - \int_0^\infty f_+ d\mu \right) 
+
\left( \int_0^{\infty} e^{f_-} d\mu - \frac{1}{2} - \int_0^\infty f_- d\mu  \right) \\
& \leq 
\left( \int_0^\infty e^{f_+} d\mu - \frac{1}{2}  \right) 
+
\left( \int_0^{\infty} e^{f_-} d\mu - \frac{1}{2}   \right)
\end{align*}
where for the last inequality we used the fact that $f_\pm \geq 0$.  Thanks to our assumption, we finally get
\begin{align*}
\int_{-\infty}^\infty e^g d\mu 
- e^{\int_{-\infty}^\infty g d\mu}
& \leq 
F \left( \int_0^\infty G(f'_+) d\mu \right) 
+
F \left( \int_0^\infty G(f_-') d\mu \right) \\
& \leq 
F_2 \left( \int_0^\infty G(f_+') d\mu  + \int_0^\infty G(f_-') d\mu \right) \\
& 
\leq 
F_2 \left( \int_0^\infty G(|g'|) d\mu  + \int_{0}^{\infty} G(|g'(-x)|) d\mu \right)
\, = \,
F_2 \left( \int G(|g'|) d\mu \right)
\end{align*}
where we used that $F_2$ is non-decreasing (which is a consequence of 
the fact that $F$ is non-decreasing). This proves the first part of the lemma.

For $g \colon \mathbb{R} \to \mathbb{R}$  with $\int_{-\infty}^\infty gd\mu=0$ we have just 
proved that
$$
\int_{-\infty}^\infty e^g d\mu \leq 
1 + e^{g(0)} F_2 \left( \int_{-\infty}^\infty G(|g'|) d\mu \right) =
1 + e^{- \int_{-\infty}^\infty (g-g(0)) d\mu} F_2 \left( \int_{-\infty}^\infty G(|g'|) d\mu \right).
$$
Hardy-type inequality \eqref{eq:hardy} and the symmetry of $\mu$ guarantee that
$$
\int_{-\infty}^\infty |g-g(0)| d\mu =\int_0^\infty |g(x)-g(0)| d\mu +\int_0^\infty |g(-x)-g(0)| d\mu \leq A \int_{-\infty}^\infty |g'| d\mu .
$$
Therefore
\begin{align*}
\int_{-\infty}^\infty e^g d\mu 
& \leq 
1 + e^{\int_{-\infty}^\infty |g -g(0)|d\mu} F_2 \left( \int_{-\infty}^\infty G(|g'|) d\mu \right) \\
& \leq 
 1 + e^{A \int_{-\infty}^\infty |g'|d\mu} F_2 \left( \int_{-\infty}^\infty G(|g'|) d\mu \right) 
\end{align*}
leading to the second part of the lemma.
\end{proof}

We end this section by focusing on the Gaussian measure $\gamma_1$ and 
the special case $F(x)=ae^{bx}-c$ for some constants $a,b,c>0$. The lemma 
will be applied later on to $G(x)=e^{x^2/2}/\sqrt{1+(x^2/2)}$, which amounts 
to considering the inequality \eqref{eq:IR} since $G$ compares with 
$e^{x^2/2}/(1+x)$. The choice of $G$ is governed by the fact that it is 
convex increasing, a property that is not shared by the map  
$x \mapsto e^{x^2/2}/(1+x)$ considered in \cite{IR}.

\begin{lem} \label{lem:paris}
Let $F(x)=ae^{bx}-c$ for $a,b,c>0$ with $a \geq c$, and let $G \colon [0, \infty) \to [0,\infty)$ be non-decreasing convex function.
Assume that the Gaussian measure $\gamma_1$ satisfies
$$
\int_0^\infty e^f d\gamma_1 - \frac{1}{2}e^{f(0)} \leq 
e^{f(0)} F \left( \int_0^\infty G(|f'|) d\gamma_1 \right)
$$
for all smooth non-decreasing $f \colon [0, \infty) \to [0,\infty)$.  
Then, for any dimension $n$ and any smooth $g \colon \mathbb{R}^n \to \mathbb{R}$ 
such that $\int_{-\infty}^\infty gd\gamma_n = 0$, the following inequality holds
\begin{equation} \label{eq:13}
\int_{-\infty}^\infty e^g d\gamma_n 
\leq 
1 + a \exp \left\{(d+b)\int_{-\infty}^\infty G(|\nabla g|) d\gamma_n \right\}
+(a-2c) \exp \left\{d\int_{-\infty}^\infty G(|\nabla g|) d\gamma_n \right\}
\end{equation}
with
$d \coloneqq \sqrt{\frac{\pi}{2}} \max_{x \geq 0} \frac{x}{G(x)}$. 
%and $d' \coloneqq \frac{\sqrt{\frac{\pi}{2}} \sqrt{1+\sqrt{2}}e^{-\frac{1}{\sqrt{2}}}}{b} + 1$.
\end{lem}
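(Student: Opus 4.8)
The plan is to combine the dimensional reduction of Proposition~\ref{prop:product} with the half-line estimate of Lemma~\ref{lem:reduction}, the only genuinely analytic point being the sharp constant in the Gaussian Hardy inequality \eqref{eq:hardy}. Write $\widetilde F(t):=1+ae^{(d+b)t}+(a-2c)e^{dt}$ for $t\ge 0$, so that the asserted bound \eqref{eq:13} is exactly $\int e^g\,d\gamma_n\le\widetilde F\bigl(\int G(|\nabla g|)\,d\gamma_n\bigr)$. First I would record that $\widetilde F$ is non-negative and non-decreasing on $[0,\infty)$: factoring $e^{dt}$ and using $e^{bt}\ge 1$ gives $\widetilde F'(t)\ge e^{dt}\bigl(2d(a-c)+ab\bigr)\ge 0$ since $a\ge c$ and $d\ge 0$. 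As $G$ is convex, Proposition~\ref{prop:product} then applies with $F$ replaced by $\widetilde F$, so it suffices to prove $\int_{\mathbb{R}}e^g\,d\gamma_1\le\widetilde F\bigl(\int_{\mathbb{R}}G(|g'|)\,d\gamma_1\bigr)$ for all smooth $g$ with $\int g\,d\gamma_1=0$ (a fortiori for non-decreasing ones), and one may assume $\int G(|g'|)\,d\gamma_1<\infty$, the bound being trivial otherwise.

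To obtain this one-dimensional inequality I would apply Lemma~\ref{lem:reduction} to the symmetric measure $\mu=\gamma_1$: its hypothesis \eqref{eq:reduction} is precisely the half-line inequality assumed in the statement, so its conclusion \eqref{eq:final} applies provided $\gamma_1$ satisfies \eqref{eq:hardy}, which is the step I expect to require the most care. Here, writing $f(x)-f(0)=\int_0^x f'$ and using Fubini,
$$
\int_0^\infty |f-f(0)|\,d\gamma_1\le\int_0^\infty|f'(t)|\,\overline{\Phi}(t)\,dt\le A\int_0^\infty|f'|\,d\gamma_1,\qquad \overline{\Phi}(t):=\int_t^\infty\varphi(s)\,ds,
$$
with $A:=\sup_{t\ge 0}\overline{\Phi}(t)/\varphi(t)$. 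From the elementary Mills-ratio bound $\overline{\Phi}(t)\le\varphi(t)/t$ for $t>0$ (integrate $\varphi(s)\le(s/t)\varphi(s)$ over $s\ge t$) one checks that $\overline{\Phi}/\varphi$ is non-increasing on $[0,\infty)$, so its supremum is attained at $t=0$ and $A=\overline{\Phi}(0)/\varphi(0)=\tfrac12\sqrt{2\pi}=\sqrt{\pi/2}$; thus \eqref{eq:hardy} holds for $\gamma_1$ with $A=\sqrt{\pi/2}$.

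It remains to post-process \eqref{eq:final}, which for $g$ with $\gamma_1$-mean zero reads $\int e^g\,d\gamma_1\le 1+e^{A\int|g'|\,d\gamma_1}\,F_2\bigl(\int G(|g'|)\,d\gamma_1\bigr)$. Since $F(x)=ae^{bx}-c$ is convex with $F(0)=a-c\ge 0$, the remark after \eqref{eq:F2} gives $F_2(x)=F(x)+F(0)=ae^{bx}+a-2c$, which is $\ge 2(a-c)\ge 0$. Next, the pointwise bound $|t|\le\bigl(\max_{x\ge 0}\tfrac{x}{G(x)}\bigr)G(|t|)$ yields $A\int|g'|\,d\gamma_1\le d\int G(|g'|)\,d\gamma_1$ with $d=\sqrt{\pi/2}\,\max_{x\ge 0}\tfrac{x}{G(x)}$, exactly as in the statement. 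Using $F_2\ge 0$ to replace $e^{A\int|g'|}$ by the larger $e^{d\int G(|g'|)}$ and expanding the product, the right-hand side becomes precisely $1+ae^{(d+b)\int G(|g'|)\,d\gamma_1}+(a-2c)e^{d\int G(|g'|)\,d\gamma_1}=\widetilde F\bigl(\int G(|g'|)\,d\gamma_1\bigr)$. Feeding this one-dimensional inequality into Proposition~\ref{prop:product} completes the proof; apart from the Hardy-constant computation, everything is bookkeeping with the explicit $F$ and $G$ and the two reduction results already at our disposal.
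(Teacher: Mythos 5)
Your proof is correct and follows essentially the same route as the paper: verify the Gaussian Hardy constant $A=\sqrt{\pi/2}$, invoke Lemma \ref{lem:reduction} and the explicit identity $F_2(x)=ae^{bx}+a-2c$, bound $A\int|g'|\,d\gamma_1$ by $d\int G(|g'|)\,d\gamma_1$, and lift from dimension one via Proposition \ref{prop:product}. The only deviations --- a self-contained Fubini/Mills-ratio derivation of the Hardy bound in place of the paper's appeal to Muckenhoupt, and the explicit check that $\widetilde F$ is non-decreasing and non-negative (a hypothesis of Proposition \ref{prop:product} that the paper leaves implicit) --- are useful polish rather than a different argument.
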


\begin{remark}
Observe that if $d=\infty,$ the right-hand side of \eqref{eq:13} is infinite, and the conclusion of the lemma is useless.
\end{remark}
\begin{proof}
First we claim that the Hardy-type inequality \eqref{eq:hardy} holds for 
$\gamma_1$ with constant $A=\sqrt{\frac{\pi}{2}}$. 
To prove that this constant is best possible, we may use a result by 
Muckenhoupt \cite{muckenhoupt}. Indeed, this author proved that 
the best constant in \eqref{eq:hardy} is 
$$
A = \sup_{r > 0}\, e^{r^2/2} \int_r^\infty e^{-x^2/2} dx .
$$
Now observe that
$$
\int_r^\infty e^{-x^2/2} dx 
\leq 
\frac{1}{r}\int_r^\infty x e^{-x^2/2} dx
=
\frac{e^{-r^2/2}}{r} .
$$
Therefore, if we denote $H(r) = e^{r^2/2} \int_r^\infty e^{-x^2/2} dx$, $r>0$, 
we have
$$
H'(r) = re^{r^2/2} \int_r^\infty e^{-x^2/2} dx -1 \leq 0 .
$$
In turn, the best constant is $A = H(0) = \sqrt{\frac{\pi}{2}}$ as announced.

Now Lemma \ref{lem:reduction} implies that 
$$
\int_\mathbb{R} e^g d\gamma_1 
\leq 
1 + e^{A \int |g'|d\gamma_1} F_2 \left( \int_\mathbb{R} G(|g'|)d\gamma_1 \right)
$$
holds for all $g  \colon \mathbb{R} \to \mathbb{R}$ smooth with $\int gd\gamma_1=0$.
Observe that, for $F(x)=ae^{bx}-c$, $F_2(x)=F(x)+F(0)=ae^{bx}+a-2c$. Hence, 
the latter inequality reads
\begin{align*}
\int_{-\infty}^\infty e^g d\gamma_1 
& \leq 
1 + e^{A \int_{-\infty}^\infty |g'|d\gamma_1} 
\left( a e^{ b \int_{-\infty}^\infty G(|g'|)d\gamma_1 } + a - 2c \right) \\
& \leq 
1 + a \exp \left\{(d+b)\int_{-\infty}^\infty G(| g'|) d\gamma_1 \right\}
+(a-2c) \exp \left\{d\int_{-\infty}^\infty G(|g'|) d\gamma_1 \right\} .
\end{align*}
The expected result follows from Proposition \ref{prop:product}.
\end{proof}

%%%%%%%%%%%%%%%%%%%%%%%%%%%%%%%%%%%%%%%%%%%%%%%%%%%%%%%%%%%%%%%%%%%%%%%%%%%
%%%%%%%%%%%%%%%%%%%%%%%%%%%%%%%%%%%%%%%%%%%%%%%%%%%%%%%%%%%%%%%%%%%%%%%%%%%%

\section{Exponential inequality via $\Gamma_2$ condition and semi-group}
\label{sec:semigroup}

In this section we obtain some exponential inequality using the so-called 
$\Gamma_2$-condition and via semi-group techniques. Let us start by collecting 
some useful and well known facts on the $\Gamma_2$ calculus. We refer 
to \cite{bakry}, \cite[Chapter 5]{ane} and to 
the excellent \cite[Section 2]{bakry-ledoux} for more details and comments. 

\subsection{$\Gamma_2$ formalism} \label{sec:gamma2}
The general setting is  given by an abstract Markov generator $L$, on some 
probability space $(\mathcal{X},\mathcal{B},\mu)$, associated to the semi-group 
$(P_t)_{t \geq 0}$. We assume that $L$ is self-adjoint in $\mathbb{L}^2(\mu)$ 
and has domain $\mathcal{D}(L)$. We further assume the existence of a set 
$\mathcal{A}$ of a good family of functions (see \cite[Definition 2.4.2]{ane}). 
The set $\mathcal{A}$ is assumed to be contained and to be dense in all 
$\mathbb{L}^p(\mu)$, $1 < p <\infty$, it is stable under the action of $P_t$, 
$t>0$, and $L$ and also by composition by any $\mathcal{C}^\infty$-smooth functions. 
Finally we assume that $\mathcal{A}$ contains all constant functions and 
$\lim_{t \to \infty} P_tf = \int f d\mu$ for all $f \in \mathcal{A}$.

For the Ornstein-Uhlenbeck process $\mathcal{A}$ may consist of all 
$\mathcal{C}^\infty$ smooth functions with successive derivatives slowly 
growing at infinity
(\textit{i.e.}\ such that $|f| \leq P$ at infinity, for some polynomial $P$ 
and the same for the derivatives). For a compact connected complete Riemannian 
manifold and $L=\Delta +X$, with $\Delta $ the Laplace-Beltrami operator, and 
$X$ a smooth vector field (with no constant term), see below, $\mathcal{A}$ 
may consist of all $\mathcal{C}^\infty$ smooth functions.

For non-compact Riemanian manifolds the situation is more complicated since 
compactly supported $\mathcal{C}^\infty$-smooth functions are not stable 
under $P_t$ in general. We refer to \cite{elworthy} for more on this issue. 

Now, following P.-A. Meyer and Bakry-Émery, we introduce
the "carr\'e du champs" operator
$$
\Gamma(f,g) \coloneqq \frac{1}{2} \left( L(fg) - fLg - g Lf \right) 
$$
and its iterated
$$
\Gamma_2(f,g) \coloneqq 
\frac{1}{2} \left( L\Gamma(f,g) - \Gamma(f,Lg) - \Gamma(g,Lf) \right), 
\qquad f,g \in \mathcal{A}.
$$
For simplicity we write $\Gamma(f):=\Gamma(f,f)$
and $\Gamma_2(f)=\Gamma_2(f,f)$.

On a connected complete Riemannian manifold $\mathcal{X}=M$, denote by $dx$ and
the Riemannian volume element. For $L = \Delta + \nabla V$,
with $V$ smooth enough and $\int e^{V}dx=1$, 
$\Gamma(f)=|\nabla f|^2$ is the Riemannian length of the gradient. The Bochner's 
Formula indicates that $\Gamma_2(f)=Ric(\nabla f, \nabla f) + \|Hess(f)\|_2^2$
where $Ric$ is the Ricci tensor of $M$ and $\|Hess(f)\|_2^2$ the Hilbert-Schmidt 
norm of the tensor of the second order derivatives of $f$.
The potential $V(x)=-|x|^2/2$ in $\mathbb{R}^n$ corresponds 
to the Ornstein-Uhlenbeck semi-group.

\bigskip 

Next we say that $L$ has curvature $\rho \in \mathbb{R}$ ($\Gamma_2$-condition)
if 
\begin{equation} \label{eq:curvature}
\Gamma_2(f) \geq \rho \Gamma(f), \qquad \forall f \in \mathcal{A} .
\end{equation}

As is well known the Ornstein-Uhlenbeck operator in $\mathbb{R}^n$ has curvature $\rho=1$, the operator
$L = \Delta + \nabla V$, on a manifold with Ricci curvature bounded below by $R$, has curvature bounded below by $\rho$ if  $R + \nabla  \nabla V \geq \rho g$ where $g$ is the Riemannian metric. 

One of the main feature of the above general framework is that it allows to prove similar results in different settings ($\mathbb{R}^n$, manifold etc.).
One important property that illustrate this fact is the commutation of the semi-group and the carr\'e du champs operator. Indeed, it is well-known that the curvature condition \eqref{eq:curvature} is equivalent to saying that, for all $f \in \mathcal{A}$, it holds
\begin{equation} \label{eq:commutaiton}
    \Gamma(P_t f) \leq e^{-2 \rho t} P_t \Gamma(f) .
\end{equation}

Also, the operator $L$ satisfies in our general framework, the following integration by part formula, for $f,g \in \mathcal{A}$,
\begin{equation}\label{eq:ibp}
    \int f L g d\mu = - \int \Gamma(f,g) d\mu .
\end{equation}
In most places, we may need that $\Gamma$ represents a derivation. This is the case when $L$ is a diffusion, which means that, for any $\Phi \colon \mathbb{R}^n \to \mathbb{R}$, $\mathcal{C}^\infty$ and any $f=(f_1,\dots,f_n) \in \mathcal{A}^n$, it holds that
$$
L(\Phi(f)) = \sum_{i=1}^n \partial_i \Phi(f) Lf_i 
+ \sum_{i,j=1}^n \partial^2_{ij}\Phi(f) \Gamma(f_i,f_j) .
$$
Here $\partial_i, \partial^2_{ij}$ are shorthand notation for the first and second order derivatives with respect to the $i$-th, $i,j$-th variables respectively. One crucial consequence of the diffusion property is that
\begin{equation} \label{eq:chainrule} 
\Gamma(\Phi(f),g)=\sum_{i=1}^n \partial_i \Phi(f)\Gamma(f_i,g), \qquad f,g \in \mathcal{A} .
\end{equation} 
The above diffusion property is satisfied by all examples given above.

Finally, we observe that, for diffusions,  \eqref{eq:commutaiton} is also equivalent to the stronger commutation
\begin{equation} \label{eq:commutationbis}
    \sqrt{\Gamma(P_tf)} \leq e^{-\rho t} P_t ( \sqrt{\Gamma(f)}), \qquad f \in \mathcal{A} .
\end{equation}

\subsection{Inequality \eqref{eq:bg} revisited}

In this section we provide a semi-group approach, under the $\Gamma_2$ condition, of Inequality \eqref{eq:bg} that improves the exponent $\frac{c}{\alpha -c}$. 
The result will however still exclude the extremal value $\alpha=c$. On the other hand, since it is classical that the curvature condition $\Gamma_2 \geq \rho \Gamma$, $\rho >0$, implies the log-Sobolev inequality, our result deal with less general situations than in \cite{bobkov-gotze} (there exist probability measures satisfying the log-Sobolev inequality and not the $\Gamma_2$ condition).  Also, our  result represents a local version (in the sense that it involves the semi-group $P_t$)  of \eqref{eq:bg}, which, to the best of our knowledge, is new.
In other words, the approach in \cite{bobkov-gotze} is more robust while our approach by semi-group gives a stronger statement.

\begin{thm} \label{th:BG}
Let $L$ be   some Markov diffusion operator satisfying the curvature condition 
$\Gamma_2(f) \geq \rho \Gamma(f)$, $\rho >0$, for all $f \in \mathcal{A}$.
Then, for all $f$ and all $t>0$ the following inequality holds
$$
\log P_t (e^f) - P_t f  \leq c_\alpha(t) \left( \log P_{t} \left( e^{\alpha \Gamma (f)} \right)  \right)
$$
for all 
%$\alpha > (1-e^{-2t})/2$, 
$\alpha > \frac{e^{2\rho t} + e^{-2\rho t} -2}{2\rho(e^{2\rho t}-1)}$
with 
$$
c_\alpha(t)= 
\frac{\sqrt{1-e^{-2\rho t}}}{2\sqrt{2\rho \alpha}}
\log \left( \frac{(e^{\rho t}a_t-1)(e^{\rho t}a_t+1+a_t^2-e^{-2\rho t})}
{(e^{\rho t}a_t+1)(-e^{\rho t}a_t+1+a_t^2-e^{-2\rho t})} \right) 
$$
and
$$
a_t^2 \coloneqq 2\rho \alpha (e^{2\rho t}-1) .
$$
% \log \left( \frac{(2\rho \alpha e^{2\rho t})(2\rho \alpha+e^{-2\rho t})}{(2\rho \alpha e^{2\rho t}+1)(2\rho \alpha - 1 +e^{-2\rho t})}  \right)
% %\log \left( \frac{(2\alpha e^{2t})(2\alpha + e^{-2t})}{(2\alpha e^{2t}+1)(2\alpha-1+e^{-2t})}\right)
% $$
In particular, for any $\alpha > \frac{1}{2\rho}$ and any $f$ satisfying $\int fd\mu=0$, it holds
$$
\int e^f d\mu 
\leq \left( \int  e^{\alpha \Gamma(f)} d\mu   \right)^{\frac{1}{2\sqrt{2\rho \alpha}}\log \frac{\sqrt{2\rho \alpha}+1}{\sqrt{2\rho \alpha} -1}} .
$$
\end{thm}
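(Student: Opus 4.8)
The plan is to deduce the pointwise inequality, at each fixed point $x$, from a scalar differential inequality in an auxiliary parameter $\lambda$, obtained by feeding the \emph{local} logarithmic Sobolev inequality into a Bobkov--Götze--Herbst type scheme \cite{bobkov-gotze}, with the nonlinear term $\Gamma(f)$ on the right-hand side absorbed through convex duality. I would start from the classical fact that the curvature condition $\Gamma_2\geq\rho\Gamma$ yields the local log-Sobolev inequality (it follows from the commutation \eqref{eq:commutationbis} together with the Cauchy--Schwarz inequality; see \cite{ane,bakry-ledoux})
\begin{equation*}
P_t(g\log g)-P_t(g)\log P_t(g)\;\leq\;c(t)\,P_t\!\Big(\tfrac{\Gamma(g)}{g}\Big),\qquad g>0,\qquad c(t):=\frac{1-e^{-2\rho t}}{2\rho}.
\end{equation*}
Since $(e^{2\rho t}-1)(1-e^{-2\rho t})=e^{2\rho t}+e^{-2\rho t}-2$, one has $c(t)=\frac{e^{2\rho t}+e^{-2\rho t}-2}{2\rho(e^{2\rho t}-1)}$, so the admissibility hypothesis on $\alpha$ is exactly $\alpha>c(t)$; note $c(t)<\tfrac1{2\rho}$ for $t<\infty$.

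Next I would fix $x$ and, after a routine truncation reducing to bounded $f$, set $\Lambda(\lambda):=\log P_t(e^{\lambda f})(x)$ for $\lambda\in[0,1]$. Then $\Lambda(0)=0$, $\Lambda$ is convex (it is the logarithmic moment generating function of $f$ under the kernel $P_t(x,\cdot)$), $\Lambda'(0)=P_t f(x)$, and $R(\lambda):=\lambda\Lambda'(\lambda)-\Lambda(\lambda)$ satisfies $R(0)=0$ and $R'(\lambda)=\lambda\Lambda''(\lambda)\geq0$. Applying the local log-Sobolev inequality with $g=e^{\lambda f}$ (so $g\log g=\lambda f e^{\lambda f}$ and $\Gamma(g)/g=\lambda^2 e^{\lambda f}\Gamma(f)$) and dividing by $P_t(e^{\lambda f})(x)$ gives $R(\lambda)\leq c(t)\lambda^2\,\frac{P_t(e^{\lambda f}\Gamma(f))(x)}{P_t(e^{\lambda f})(x)}$. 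Letting $\nu_\lambda$ be the probability with density $e^{\lambda f}/P_t(e^{\lambda f})(x)$ with respect to $P_t(x,\cdot)$, a short computation gives $\Ent(\nu_\lambda\,\|\,P_t(x,\cdot))=R(\lambda)$, so the variational (dual) formula for relative entropy, applied to the function $\alpha\,\Gamma(f)$, yields
\begin{equation*}
\frac{P_t(e^{\lambda f}\Gamma(f))(x)}{P_t(e^{\lambda f})(x)}=\int\Gamma(f)\,d\nu_\lambda\;\leq\;\frac1\alpha\Big(R(\lambda)+\log P_t(e^{\alpha\Gamma(f)})(x)\Big).
\end{equation*}
Writing $N:=\log P_t(e^{\alpha\Gamma(f)})(x)$ and combining, $R(\lambda)\big(1-\tfrac{c(t)\lambda^2}{\alpha}\big)\leq\tfrac{c(t)\lambda^2}{\alpha}N$, hence $R(\lambda)\leq\frac{c(t)\lambda^2}{\alpha-c(t)\lambda^2}N$ on $[0,1]$, which is meaningful precisely because $\alpha>c(t)$.

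Then I would integrate: since $\frac{d}{d\lambda}\big(\Lambda(\lambda)/\lambda\big)=R(\lambda)/\lambda^2$ and $\Lambda(\lambda)/\lambda\to\Lambda'(0)=P_tf(x)$ as $\lambda\downarrow0$,
\begin{equation*}
\log P_t(e^f)(x)-P_t f(x)\;\leq\;N\int_0^1\frac{c(t)\,d\lambda}{\alpha-c(t)\lambda^2}=\frac{\sqrt{c(t)}}{2\sqrt\alpha}\log\frac{\sqrt\alpha+\sqrt{c(t)}}{\sqrt\alpha-\sqrt{c(t)}}\;N,
\end{equation*}
and it remains to identify the prefactor with $c_\alpha(t)$. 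With $a_t^2=2\rho\alpha(e^{2\rho t}-1)$ one checks $\tfrac{\sqrt{c(t)}}{2\sqrt\alpha}=\tfrac{\sqrt{1-e^{-2\rho t}}}{2\sqrt{2\rho\alpha}}$ and $\sqrt{\alpha/c(t)}=\tfrac{a_t}{e^{\rho t}-e^{-\rho t}}$, so the ratio becomes $\tfrac{a_t+e^{\rho t}-e^{-\rho t}}{a_t-e^{\rho t}+e^{-\rho t}}$; putting $b:=e^{\rho t}a_t$ and using $a_t^2-e^{-2\rho t}=(b^2-1)e^{-2\rho t}$, the factorizations $e^{\rho t}a_t+1+a_t^2-e^{-2\rho t}=(b+1)(1+(b-1)e^{-2\rho t})$ and $-e^{\rho t}a_t+1+a_t^2-e^{-2\rho t}=(b-1)((b+1)e^{-2\rho t}-1)$ turn it into the expression in the statement (numerator and denominator both positive exactly when $\alpha>c(t)$). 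Finally, for the ``in particular'' part I would let $t\to\infty$: then $c(t)\uparrow\tfrac1{2\rho}$ so the hypothesis becomes $\alpha>\tfrac1{2\rho}$, $c_\alpha(t)\to\tfrac1{2\sqrt{2\rho\alpha}}\log\tfrac{\sqrt{2\rho\alpha}+1}{\sqrt{2\rho\alpha}-1}$, and the ergodicity $P_t(\cdot)\to\int\cdot\,d\mu$ together with $\int f\,d\mu=0$ gives $\log\int e^f d\mu\leq c_\alpha(\infty)\log\int e^{\alpha\Gamma(f)}d\mu$, i.e.\ the stated bound; a monotone-convergence argument removes the boundedness reduction.

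The part I expect to require the most care is purely computational: verifying that $\int_0^1 c(t)\,(\alpha-c(t)\lambda^2)^{-1}\,d\lambda$ coincides with the somewhat baroque $c_\alpha(t)$ in the statement. Everything else — legitimacy of the differentiations in $\lambda$, finiteness and convexity of $\Lambda$, the entropy identity $\Ent(\nu_\lambda\|P_t(x,\cdot))=R(\lambda)$, and the interchange of limits as $t\to\infty$ within the good class $\mathcal{A}$ — is standard in the $\Gamma_2$ formalism.
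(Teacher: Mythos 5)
Your proposal is correct, and the route is genuinely different from the paper's. The paper proves the local inequality by interpolating $s\mapsto \log P_{t-s}\big(e^{P_s f}\big)$ over $[0,t]$, which forces it to apply the local log-Sobolev inequality at the intermediate times $t-s$ to quantities involving $\Gamma(P_s f)$; this is why it needs the commutation \eqref{eq:commutaiton}, the local hypercontractivity Lemma \ref{lem:hypercontractivity}, and the explicit integral of Lemma \ref{lem:integral}. You instead freeze the time $t$, run a single Herbst-type argument in the auxiliary parameter $\lambda$ for the measure $P_t(x,\cdot)$, apply the local log-Sobolev inequality \eqref{eq:locallsob} once to $e^{\lambda f}$, and — this is the key refinement over the plain Bobkov--G\"otze scheme, which would only give the exponent $c(t)/(\alpha-c(t))$ — use the entropic (dual) inequality with the $\lambda$-dependent parameter $\theta=\alpha/\lambda^2$ so that the dual term $\log P_t(e^{\alpha\Gamma(f)})(x)$ is independent of $\lambda$. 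This yields $R(\lambda)\le \frac{c(t)\lambda^2}{\alpha-c(t)\lambda^2}N$ and hence the prefactor $\int_0^1 c(t)(\alpha-c(t)\lambda^2)^{-1}\,d\lambda$, bypassing Lemmas \ref{lem:hypercontractivity} and \ref{lem:integral} entirely. I checked the algebra: with $b=e^{\rho t}a_t$ the paper's ratio factors as
\begin{equation*}
\frac{(b-1)(b+1)\bigl(1+(b-1)e^{-2\rho t}\bigr)}{(b+1)(b-1)\bigl((b+1)e^{-2\rho t}-1\bigr)}
=\frac{a_t+e^{\rho t}-e^{-\rho t}}{a_t-e^{\rho t}+e^{-\rho t}}
=\frac{\sqrt{\alpha}+\sqrt{c(t)}}{\sqrt{\alpha}-\sqrt{c(t)}},
\end{equation*}
and the admissibility threshold $\frac{e^{2\rho t}+e^{-2\rho t}-2}{2\rho(e^{2\rho t}-1)}$ is exactly $c(t)=\frac{1-e^{-2\rho t}}{2\rho}$, so your constant coincides with $c_\alpha(t)$ (a numerical spot-check at $e^{2\rho t}=2$, $\rho=\alpha=1$ gives $\frac14\log 3$ for both). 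The $t\to\infty$ passage for the second assertion is the same in both arguments. What the paper's heavier machinery buys here is unclear, since the constants agree; your argument is shorter, needs only the single local log-Sobolev inequality, and the coincidence of the two computations is a useful consistency check on the theorem.
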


\begin{remark}
Since $\frac{1}{2x}\log\left( \frac{x+1}{x-1} \right) \leq \log \left( \frac{x^2}{x^2-1}\right)$, we immediately get that $$
\frac{1}{2\sqrt{2\rho \alpha}}\log \frac{\sqrt{2\rho \alpha}+1}{\sqrt{2\rho \alpha} -1} \leq \log \frac{2 \rho \alpha}{2 \rho \alpha -1}
 \leq \frac{1}{2\rho \alpha -1}. 
 $$
 The result above is therefore stronger
than \eqref{eq:bg} as announced
(recall that for the $\Gamma_2$-condition implies the log-Sobolev inequality \eqref{eq:log-sobolev} with constant $c=1/(2\rho)$, hence in our setting the exponent in the right hand side of \eqref{eq:bg} is precisely $ \frac{1}{2\rho \alpha -1}$).
\end{remark}

\begin{proof}
The second part of the theorem is a direct consequence of the first, in the limit $t \to \infty$ since 
$\lim_{t \to \infty} P_t e^f =\int e^f d\mu$ and $\lim_{t \to \infty} P_tf = \int fd\mu=0$.

For the first part, observe that 
$$
\log P_t (e^f) - P_t f 
 = - \int_0^t \frac{d}{ds} \log P_{t-s} \left(  e^{P_s f} \right) ds .
$$
By the diffusion property, we have
$$
- \frac{d}{ds} \log P_{t-s} \left(  e^{P_s f} \right)
=
\frac{P_{t-s} \left( Le^{P_sf} - e^{P_sf}LP_sf\right)}{P_{t-s} \left(  e^{P_s f} \right)} 
=
  \frac{P_{t-s} \left( e^{P_sf} \Gamma( P_sf ) \right)}{P_{t-s} \left(  e^{P_s f} \right)} .
$$
Therefore,
$$
\log P_t (e^f) - P_t f 
 =  \int_0^t  \frac{P_{t-s} \left( e^{P_sf} \Gamma( P_sf ) \right)}{P_{t-s} \left(  e^{P_s f} \right)} ds 
 =
\int_0^t P_{t-s} \left(h \Gamma( f_s ) \right)  ds
$$
where we set  for simplicity $h = e^{P_sf} /P_{t-s} \left(  e^{P_s f} \right)$, which is a density with respect to $P_{t-s}$, and $f_s=P_sf$. Applying the entropic inequality (see \textit{e.g.}\ \cite[Chapter 1]{ane}), for any $\theta >0$, it holds that
$$
P_{t-s} ( h \Gamma( P_sf) ) \leq \frac{1}{\theta}\Ent_{P_{t-s}} (h) + \frac{1}{\theta} \log P_{t-s} \left( e^{\theta \Gamma(f_s)} \right) 
$$
where $\Ent_{P_{t}}(f^2) \coloneqq P_t(f^2 \log f^2) - P_t(f^2) \log P_t(f^2)$ denotes the entropy of $f^2$ with respect to $P_t$.
Now the curvature condition implies (and in fact is equivalent to) the following local log-Sobolev inequality (see \textit{e.g.}\ \cite[Theorem 5.4.7]{ane})
\begin{equation} \label{eq:locallsob} 
\Ent_{P_t}(f^2)  \leq \frac{2}{\rho}(1-e^{-2\rho t}) P_t(\Gamma(f)) .
\end{equation}
At time $t-s$ with $f=\sqrt{h}$,  \eqref{eq:locallsob}  reads 
$$
\Ent_{P_{t-s}} (h)  \leq \frac{1-e^{-2\rho(t-s)}}{2\rho} P_{t-s}(h \Gamma( f_s )) .
$$
Therefore, for any $\theta > \frac{1-e^{-2\rho(t-s)}}{2\rho}$ it holds
\begin{align*}
P_{t-s} ( h \Gamma(f_s) ) 
& \leq 
\frac{2\rho}{2\rho \theta-1+e^{-2\rho(t-s)}}\log P_{t-s} \left( e^{\theta \Gamma(f_s)} \right) \\
& \leq 
\frac{2\rho}{2\rho\theta-1+e^{-2\rho(t-s)}}\log P_{t-s} \left( e^{\theta e^{-2\rho s} P_s(\Gamma(f))} \right) \\
& =
\frac{2\rho q(s)}{2\rho\theta-1+e^{-2\rho(t-s)}}\log \left( P_{t-s} \left( e^{q(s) P_s(\Gamma(f))} \right) \right)^{1/q(s)}
\end{align*}
where for the second inequality we used the commutation property \eqref{eq:commutaiton} and in the last equality we choose $\theta$ so that 
$$
\theta e^{-2\rho s} = q(s) \coloneqq \frac{\alpha(e^{2\rho t}-1)}{e^{2\rho(t-s)}-1} .
$$
This choice is licit as soon as 
$\alpha > \frac{e^{2\rho t} + e^{-2\rho t} -2}{2\rho(e^{2\rho t}-1)}$ (which guaranties that 
$\theta > \frac{1-e^{-2\rho(t-s)})}{2\rho}$). Now by Lemma \ref{lem:hypercontractivity}, applied at time $0$ and $s$, with $p=\alpha$, we get
\begin{align*}
P_{t-s} ( h \Gamma(f_s) ) 
& \leq 
\frac{2\rho q(s)}{2\rho\theta-1+e^{-2\rho(t-s)}}\log \left( P_{t} \left( e^{q(0) (\Gamma(f))} \right) \right)^{1/q(0)} \\
& = 
\frac{2\rho q(s)/q(0)}{2\rho\theta-1+e^{-2\rho(t-s)}}\log  P_{t} \left( e^{\alpha  (\Gamma(f))} \right)  .
\end{align*}
Using the explicit expressions of $\theta$ and $q(s)$ in terms of $s,t,\rho$, we conclude after some algebra that
$$
\log P_t \left( e^f \right) - P_t f \leq 
\log P_t \left( e^{\alpha \Gamma(f)} \right) \int_0^t \frac{2\rho (e^{2\rho t}-1)}{2\rho \alpha e^{2\rho s} (e^{2\rho t}-1) + (e^{2\rho(t-s)}-1)(e^{-2\rho(t-s)}-1)} ds.
$$
The expected result follows from Lemma \ref{lem:integral}. The proof is complete.
\end{proof}

\begin{remark}
In the course of the proof, we could have used a cruder but shorter argument. Indeed, by Jensen's inequality
$$
P_{t-s} \left( e^{\theta e^{-2\rho s} P_s(\Gamma(f))} \right) 
\leq 
P_{t-s} P_s \left( e^{\theta e^{-2 \rho s}\Gamma(f)} \right)
= 
P_t \left( e^{\theta e^{-2\rho s}\Gamma(f)} \right).
$$
Therefore, for $\theta = \theta(s) = \alpha e^{2\rho s}$, we obtain (for $\alpha > (1-e^{-2t})/2\rho$)
\begin{align*}
\log P_t (e^f) - P_t f  
& \leq 
\log P_{t} \left( e^{\alpha\Gamma(f)} \right) \int_0^t  \frac{2\rho}{2\rho \theta(s)-1+e^{-2\rho (t-s)}} ds \\
& =
\log P_{t} \left( e^{\alpha\Gamma(f)} \right)  \log \left( \frac{2\rho \alpha e^{2\rho t}}{2\rho \alpha e^{2\rho t}+1} \times \frac{2\rho \alpha+e^{-2\rho t}}{2\rho \alpha - 1 +e^{-2\rho t}}  \right)
\end{align*}
(where the equality follows by computing explicitly the integral (changing variables $u=e^{2\rho s}(2\rho\alpha+e^{-2\rho t})-1$ (the denominator)).
In the limit $t \to \infty$ the latter implies
$$
\int e^f d\mu \leq \left( \int e^{\alpha\Gamma(f)} d\mu \right)^{\log \left( \frac{2\rho \alpha }{2\rho \alpha -1} \right)}
$$
for any $\alpha > \frac{1}{2\rho}$
which already constitutes  an improvement with respect to \eqref{eq:bg}, under the curvature condition.
\end{remark}

In the proof of Theorem \ref{th:BG} we used the following lemmas.

\begin{lem} \label{lem:hypercontractivity}
Let $L$ be a Markov diffusion operator satisfying the curvature condition $\Gamma_2(f) \geq \rho \Gamma(f)$ for some $\rho >0$ and all $f \in \mathcal{A}$.  Let $0\leq s \leq s' < t$. Then  any $f \in \mathcal{A}$ non-negative satisfies
$$
(P_{t-s'}( e^{q(s')P_{s'}f}) )^\frac{1}{q(s')}
\leq 
(P_{t-s}( e^{q(s)P_sf}) )^\frac{1}{q(s)}
$$
where 
$$
q(s)=\frac{p(e^{2\rho t} -1)}{e^{2\rho(t-s)}-1} 
$$
with $p>0$.
\end{lem}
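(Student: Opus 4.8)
The plan is to prove that the function
$$
\psi(s) \;:=\; \bigl(P_{t-s}(e^{q(s)P_s f})\bigr)^{1/q(s)}
$$
is non-increasing on $[0,t)$; the asserted bound is then just $\psi(s')\le\psi(s)$ for $s\le s'$. Throughout I would write $u=u_s:=P_sf$, so that $\partial_s u_s=Lu_s$, abbreviate $q=q(s)$, and set $\Lambda(s):=P_{t-s}(e^{qu})$, so that $\psi=\Lambda^{1/q}$. Since $f\ge 0$ gives $u\ge 0$ and hence $\Lambda\ge 1$, all the quantities below are finite, and the stability properties of the good algebra $\mathcal A$ (closedness under $P_s$, $L$ and composition with smooth maps, together with the slow-growth conditions in the model examples) license the differentiations under $P_{t-s}$ performed below.

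First I would compute $\Lambda'(s)$. Combining $\partial_s P_{t-s}=-P_{t-s}L$, the diffusion chain rule $L(e^{qu})=q(Lu)e^{qu}+q^2\Gamma(u)e^{qu}$, and $\partial_s(e^{qu})=(\dot q\,u+q\,Lu)e^{qu}$, the $Lu$--terms cancel and one gets $\Lambda'(s)=P_{t-s}\bigl((\dot q\,u-q^2\Gamma(u))e^{qu}\bigr)$. Writing $\log\psi=\frac1q\log\Lambda$, differentiating, and using $\Lambda>0$, $q>0$, shows that $\psi'\le 0$ is equivalent to $\Lambda'\le\frac{\dot q}{q}\,\Lambda\log\Lambda$. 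By the definition of entropy, $\Lambda\log\Lambda=q\,P_{t-s}(u\,e^{qu})-\Ent_{P_{t-s}}(e^{qu})$, so the terms containing $P_{t-s}(u\,e^{qu})$ cancel and the target inequality collapses to
$$
\frac{\dot q}{q}\,\Ent_{P_{t-s}}\bigl(e^{qu}\bigr)\;\le\;q^2\,P_{t-s}\bigl(\Gamma(u)\,e^{qu}\bigr).
$$

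Next I would rewrite the right-hand side, via the chain rule $\Gamma(e^{qu/2})=\frac{q^2}{4}e^{qu}\Gamma(u)$, as $4\,P_{t-s}\bigl(\Gamma(e^{qu/2})\bigr)$, and apply the local log-Sobolev inequality \eqref{eq:locallsob} at time $t-s$ to $g=e^{qu/2}$, namely
$$
\Ent_{P_{t-s}}\bigl(e^{qu}\bigr)\;\le\;\frac{2}{\rho}\bigl(1-e^{-2\rho(t-s)}\bigr)\,P_{t-s}\bigl(\Gamma(e^{qu/2})\bigr).
$$
It then remains to verify the elementary identity
$$
\frac{\dot q(s)}{q(s)}\;=\;\frac{2\rho}{1-e^{-2\rho(t-s)}},
$$
which holds exactly for $q(s)=p(e^{2\rho t}-1)/(e^{2\rho(t-s)}-1)$: indeed $\log q(s)=\text{const}-\log(e^{2\rho(t-s)}-1)$, and differentiating yields the right-hand side. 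This is precisely why $q$ is chosen this way, as it makes the estimate tight with equality at each step. Feeding the identity and the log-Sobolev bound back in gives $\frac{\dot q}{q}\Ent_{P_{t-s}}(e^{qu})\le 4\,P_{t-s}(\Gamma(e^{qu/2}))=q^2P_{t-s}(\Gamma(u)e^{qu})$, hence $\psi'\le 0$ on $[0,t)$, which is the lemma.

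The main, and essentially only, obstacle is the bookkeeping: justifying that $\psi$ is differentiable and that one may interchange $\partial_s$ with $P_{t-s}$ and with the integration hidden in $P_{t-s}$. This is handled by the standing hypotheses on $\mathcal A$ (density in $\mathbb L^p(\mu)$, stability under $P_s$, $L$ and smooth composition, and the slow-growth conditions in the model examples such as the Ornstein-Uhlenbeck semigroup); everything else is the computation above together with the local log-Sobolev inequality already recorded.
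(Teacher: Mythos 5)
Your proposal is correct and follows essentially the same route as the paper's proof: both monitor the quantity $\frac{1}{q(s)}\log P_{t-s}(e^{q(s)P_sf})$ (you exponentiate it, which changes nothing), compute its derivative via the diffusion chain rule so that the $Lu$-terms cancel and an entropy term appears, and close the argument with the local log-Sobolev inequality \eqref{eq:locallsob}, using that the choice of $q$ gives exactly $\dot q/q = 2\rho/(1-e^{-2\rho(t-s)})$. No gaps.
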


\begin{remark}
Lemma \ref{lem:hypercontractivity} corresponds to some local hypercontractivity property that is usually  stated for the invariant measure $\mu$ in place of $P_{t}$, see \cite{bakry,ane}. Some similar local version of the hypercontractivity property already exist in \cite[Theorem 3.1]{BBG}.  
It is plain and somehow classical that
the statement of Lemma \ref{lem:hypercontractivity}  is  equivalent to what appears in \cite[Theorem 3.1]{BBG}\footnote{and in fact the conclusion of the Lemma implies the curvature condition so that both properties are equivalent} but we need the specific  form of the statement above that we could not find or directly derive from \cite{BBG}.
\end{remark}

\begin{proof}
% By a density argument we can assume that $f$ is bounded and since the result is invariant by translation ($f \to f+a$, $a \in \mathbb{R}$), we can even assume that
% $f$ is positive.
Fix $t$ and set 
$\psi(s) = \frac{1}{q(s)} \log \left( P_{t-s} \left( e^{q(s)P_s f} \right) \right)$.
Taking the derivative, we get, after some algebra
(using the diffusion property)
\begin{align*}
\psi'(s) 
&= 
-\frac{q'(s)}{q(s)^2} \log \left( P_{t-s} \left( e^{q(s)P_s f} \right) \right)
+
\frac{P_{t-s} \left( - Le^{qP_sf} + e^{qP_sf}[q'P_s f + q L P_sf] \right)}{q(s) P_{t-s} \left( e^{q(s)P_s f} \right)}
    \\
& =
\frac{q'(s)}{q(s)^2} \frac{1}{P_{t-s} \left( e^{q(s)P_s f} \right)}
\left( 
\Ent_{P_{t-s}}\left( e^{qP_sf}\right) - \frac{4q}{q'} P_{t-s} \left( \Gamma( e^{qP_sf/2})\right)
\right)
.
\end{align*}
Our choice of $q$ insures that
$$
\frac{4q(s)}{q'(s)} = \frac{2}{\rho}(1-e^{-2\rho(t-s)})
$$
so that, applying the local log-Sobolev inequality \eqref{eq:locallsob} at time $t-s$ to $f^2=e^{qP_sf}$, we conclude that $\psi'(s) \leq 0$. The expected result immediately follows.
\end{proof}

\begin{lem} \label{lem:integral}
For any $t>0$ it holds
\begin{align*}
& \int_0^t \frac{2\rho (e^{2\rho t}-1)}{2\rho \alpha e^{2\rho s} (e^{2\rho t}-1) + (e^{2\rho(t-s)}-1)(e^{-2\rho(t-s)}-1)} ds \\
& \phantom{AAAAAAAA} =
\frac{\sqrt{1-e^{-2\rho t}}}{2\sqrt{2\rho \alpha}}
\log \left( \frac{(e^{\rho t}a_t-1)(e^{\rho t}a_t+1+a_t^2-e^{-2\rho t})}
{(e^{\rho t}a_t+1)(-e^{\rho t}a_t+1+a_t^2-e^{-2\rho t})} \right) 
\end{align*}
with 
$$
a_t^2 \coloneqq 2\rho \alpha (e^{2\rho t}-1) .
$$
\end{lem}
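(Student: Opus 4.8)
The plan is to turn the integral into a textbook rational integral via the substitution $u=e^{2\rho s}$ and then evaluate it with the elementary antiderivative of $1/(Au^2+Bu+C)$.

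First I would set $T:=e^{2\rho t}$ and substitute $u=e^{2\rho s}$, so that $ds=du/(2\rho u)$, $e^{2\rho(t-s)}=T/u$, $e^{-2\rho(t-s)}=u/T$, and $u$ runs over $[1,T]$. Multiplying numerator and denominator of the integrand by $uT>0$ and using $2\rho\alpha T(T-1)=a_t^2 T$ (which is just the definition $a_t^2=2\rho\alpha(T-1)$), the integral becomes $\int_1^T \frac{T(T-1)}{(a_t^2T-1)u^2+2Tu-T^2}\,du$. With $A=a_t^2T-1$, $B=2T$, $C=-T^2$ the discriminant is $B^2-4AC=4T^2\big(1+(a_t^2T-1)\big)=4a_t^2T^3>0$, so $\sqrt{B^2-4AC}=2a_tT^{3/2}$, and since the quadratic $(a_t^2T-1)u^2+2Tu-T^2$ keeps constant sign on $[1,T]$ — equivalently, the original denominator does not vanish on $[0,t]$, which holds in the range of $\alpha$ for which the integral is used — one may apply the standard primitive
$$
\int\frac{du}{Au^2+Bu+C}=\frac{1}{2a_tT^{3/2}}\log\left|\frac{Au+T-a_tT^{3/2}}{Au+T+a_tT^{3/2}}\right| .
$$

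It then remains to evaluate this at $u=T$ and $u=1$, using $e^{\rho t}=\sqrt T$ and $e^{-2\rho t}=1/T$: at $u=T$ the numerator and denominator inside the logarithm factor as $a_tT^{3/2}(e^{\rho t}a_t\mp 1)$, while at $u=1$ they factor as $T\big(\mp e^{\rho t}a_t+1+a_t^2-e^{-2\rho t}\big)$. Cancelling the common factors $a_tT^{3/2}$ and $T$ and subtracting the two logarithms, the argument of the logarithm collapses to exactly $\frac{(e^{\rho t}a_t-1)(e^{\rho t}a_t+1+a_t^2-e^{-2\rho t})}{(e^{\rho t}a_t+1)(-e^{\rho t}a_t+1+a_t^2-e^{-2\rho t})}$. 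Finally I would simplify the leftover constant $\frac{T(T-1)}{2a_tT^{3/2}}=\frac{T-1}{2a_t\sqrt T}$ and check, via $1-e^{-2\rho t}=(T-1)/T$ and $a_t=\sqrt{2\rho\alpha(T-1)}$, that it equals $\frac{\sqrt{1-e^{-2\rho t}}}{2\sqrt{2\rho\alpha}}$, completing the identity.

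There is no conceptual difficulty here; the whole argument is bookkeeping. The only mildly delicate points are keeping track of the sign of $A=a_t^2T-1$ (which can be negative for small $\rho t$, though this does not affect the validity of the primitive since the discriminant stays positive and the quadratic has no root in $[1,T]$), and performing the endpoint algebra carefully enough that the somewhat opaque right-hand side of the statement emerges cleanly.
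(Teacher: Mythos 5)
Your computation is correct and follows essentially the same route as the paper: the substitution $u=e^{2\rho s}$ reduces the integral to $\int_1^{e^{2\rho t}}\frac{e^{2\rho t}-1}{c_tu^2+2u-e^{2\rho t}}\,du$ with $c_t=2\rho\alpha(e^{2\rho t}-1)-e^{-2\rho t}$ (your $A=a_t^2T-1$ is just $Tc_t$ after your extra normalization by $T$), and the paper then completes the square and integrates $1/(v^2-d_t^2)$, which is the same elementary antiderivative you invoke via the discriminant. The endpoint evaluations and the simplification of the prefactor to $\sqrt{1-e^{-2\rho t}}/(2\sqrt{2\rho\alpha})$ all check out.
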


\begin{proof}
Changing variables $u=e^{2\rho s}$, we observe that
\begin{align*}
I_t 
& \coloneqq 
\int_0^t \frac{2\rho (e^{2\rho t}-1)}{2\rho \alpha e^{2\rho s} (e^{2\rho t}-1) + (e^{2\rho(t-s)}-1)(e^{-2\rho(t-s)}-1)} ds    \\
& =
\int_1^{e^{2\rho t}} \frac{e^{2\rho t}-1}
{u \left[2\rho \alpha u(e^{2\rho t}-1) + 2 - \frac{e^{2\rho t}}{u} - e^{-2\rho t} u\right]} du \\
& =
\int_1^{e^{2\rho t}} \frac{e^{2\rho t}-1}
{u^2 c_t  + 2u - e^{2\rho t}} du
\end{align*}
where we set for simplicity 
$c_t \coloneqq 2\rho \alpha (e^{2\rho t}-1) -e^{-2\rho t}$. Denote $d_t^2 \coloneqq\frac{1}{c_t^2}+ \frac{e^{2\rho t}}{c_t}$ so that
$$
u^2 c_t  + 2u - e^{2\rho t}
=c_t \left( \left(u+\frac{1}{c_t} \right)^2 - d_t^2 \right) .
$$
Changing again variables $v=u+\frac{1}{c_t}$, it follows that 
\begin{align*}
I_t 
& = 
\frac{e^{2\rho t}-1}{c_t}\int_{1+\frac{1}{c_t}}^{e^{2\rho t}+\frac{1}{c_t}} \frac{dv}{v^2-d_t^2} 
 =
\frac{e^{2\rho t}-1}{c_t} \left[ \frac{1}{2d_t}\log \left( \frac{v-d_t}{v+d_t}\right) \right]_{1+\frac{1}{c_t}}^{e^{2\rho t}+\frac{1}{c_t}} .
\end{align*}
This leads to the expected result after some algebra.
\end{proof}

%%%%%%%%%%%%%%%%%%%%%%%%%%%%%%%%%%%%%%%%%%%%%%%%%%%%%%%%%%%
%%%%%%%%%%%%%%%%%%%%%%%%%%%%%%%%%%%%%%%%%%%%%%%%%%%%%%%%%%%

\subsection{Extension to Sub-Gaussian measures}

As already mentioned, the main assumption in the original approach of 
the inequality \eqref{eq:bg} in \cite{bobkov-gotze} is the log-Sobolev inequality. 
For the one parameter family of measures $\mu_p$ with density $Z_p^{-1}e^{-|x|^p}$ 
on the line, say, with $Z=\int_\mathbb{R} e^{-|x|^p} dx$, it is known that 
a log-Sobolev inequality holds if and only if $p \geq 2$.

In this section we make use of a different functional inequality in order 
to derive some exponential integrability bounds in the flavour of 
\eqref{eq:bg} but for the measure $\mu_p$, 
$1 < p \leq 2$. Our approach is an extension of the proof of \cite{bobkov-gotze}. In particular the semi-group approach of Theorem \ref{th:BG} cannot be applied as it is since we would need a commutation property similar to \eqref{eq:commutaiton} which does not hold for the diffusion semi-group associated to the measure $\mu_p$, $p \in (1,2)$ (the curvature condition only reads $\Gamma_2 \geq 0$).

We work on $\mathbb{R}^n$ but our result should extend to general diffusion as in Section \ref{sec:gamma2}. The choice of $\mathbb{R}^n$ is motivated by the fact that, to the best of our knowledge, the modified log-Sobolev inequality we are about to introduce has been studied and proved only in $\mathbb{R}^n$ (see \cite{GGM05,GGM07,barthe-roberto-08}).

% Recall that a Young function is an even convex function $\Phi \colon \mathbb{R} \to [0,\infty)$
% with $\Phi(0)=0$ and $\lim_{x \to \infty}\Phi(x)=\infty$. It is called "nice", following the terminology or Rao and Ren \cite{rao-ren}, if in addition $\Phi'(0)=0$, $\lim_{x \to \infty} \Phi(x)/x=\infty$ and $\Phi$ vanishes only at the origin.

% Following \cite{barthe-roberto-08}, given a nice Young function $\Phi$, we define its modification as
% $$
% H_\Phi(x) = x^2 \mathds{1}_{[0,1]} + \frac{\Phi(|x|)}{\Phi(1)} \mathds{1}_{[1,\infty)}, \qquad x \in \mathbb{R} . 
% $$
A probability measure $\mu$ on $\mathbb{R}^n$ satisfies the modified log-Sobolev 
inequality with function $H \colon \mathbb{R} \to [0,\infty)$ if there exists a 
constant $c \in (0,\infty)$ such that for all locally Lipschitz function 
$f \colon \mathbb{R}^n \to \mathbb{R}$ the following holds
\begin{equation*} %\label{eq:modifiedlsob-original}
\Ent_\mu(f^2) \leq c \int \sum_{i=1}^n H \left( \frac{\partial_i f}{f} \right)f^2 d\mu 
\end{equation*}
where $\partial_i f = \frac{\partial}{\partial x_i}f$ and 
$$
\Ent_{\mu}(f)
:=
\int f \log f d\mu - \int fd\mu \log \int fd\mu 
=
\sup\left\{\int fgd\mu:\int e^{g}d\mu \leq 1\right\} .
$$

%In the sequel we may consider only \eqref{eq:modifiedlsob} with function $H_\Phi$.

Similar to the log-Sobolev inequality, this is equivalent to
\begin{equation} \label{eq:modifiedlsob}
\Ent_\mu(e^f) \leq c \int \sum_{i=1}^n H \left( \frac{\partial_i f}{2} \right)e^f d\mu ,
\end{equation}

for all locally Lipschitz functions $f$.
The terminology goes back to \cite{bobkov-ledoux-97} in their study of the concentration phenomenon for product of exponential measures, thus recovering a celebrated result by Talagrand \cite{talagrand-91}.
The second author and Ledoux  introduced and considered  \eqref{eq:modifiedlsob} for functions such that $\partial_i f/f \leq \kappa <1$ which amounts to taking $H(x) = 2x^2/(1 - \kappa)$ if $|x| \leq \kappa$ and $H(x) = \infty$ otherwise.

The same authors proved \eqref{eq:modifiedlsob} for the family of measures $\mu_p$, with $p>2$ (they consider also measures of the form $e^{-V}$ with $V$ strictly uniformly convex), and $H(x)=c_p|x|^q$ with $q = \frac{p-1}{p} \in [1,2]$ the conjugate of $p$. Such inequalities are equivalent to $q$-log-Sobolev inequalities that are studied in depth in \cite{bobkov-zegarlinski}.

The case $p \in (1,2)$  was considered by Gentil, Guillin and Miclo \cite{GGM05} who
established a modiﬁed log-Sobolev inequality for $\mu_p$ when $p \in (1, 2)$, with  $H$ that compares to $\max(x^2,|x|^q)$, $q = \frac{p-1}{p} > 2$.
The same authors extended their result to a wider class of 
log-concave measures on the line, with tail behavior between exponential and Gaussian \cite{GGM07}. Another approach, based on Hardy-type inequalities, was proposed in \cite{barthe-roberto-08}. 
For more results related to the measures $\mu_p$, $p \in (1,2)$, we refer the reader 
to \cite{CPS15,RZ}. 

We are now in position to state our theorem.

\begin{thm} \label{th:bobkov-gotze-modified}
Assume that the probability measure $\mu$ on $\mathbb{R}^n$ satisfies the modified log-Sobolev inequality \eqref{eq:modifiedlsob} with a non decreasing convex function $H \colon \mathbb{R} \to [0,\infty)$ vanishing at the origin (and constant $c$). Assume furthermore that $x \mapsto H(\sqrt{x})$ is also convex.
Then, for all locally Lipschitz function $f \colon \mathbb{R}^n \to \mathbb{R}$
with $\int f d\mu =0$, the following inequality holds
$$
\int e^f d\mu \leq \left( \int \exp \left\{ \alpha \sum_{i=1}^n H \left(\frac{\partial_i f}{2} \right) \right\} d\mu \right)^\frac{c}{\alpha-c}
$$
for all $\alpha > c$.
\end{thm}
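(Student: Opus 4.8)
The plan is to mimic the original argument of Bobkov--Götze, replacing the role of the classical log-Sobolev inequality by the modified one \eqref{eq:modifiedlsob}. The starting point is the Herbst-type differential inequality. For $f$ with $\int f\,d\mu=0$ and a parameter $\lambda>0$, set $\Lambda(\lambda)=\log\int e^{\lambda f}\,d\mu$, the log-Laplace transform. Applying \eqref{eq:modifiedlsob} to $\lambda f$ gives
\[
\Ent_\mu(e^{\lambda f})\le c\int\sum_{i=1}^n H\!\left(\frac{\lambda\,\partial_i f}{2}\right)e^{\lambda f}\,d\mu,
\]
and since $\Ent_\mu(e^{\lambda f})=\lambda\Lambda'(\lambda)\int e^{\lambda f}\,d\mu-\Lambda(\lambda)\int e^{\lambda f}\,d\mu$ (after dividing by $\int e^{\lambda f}d\mu$), this reads
\[
\lambda\Lambda'(\lambda)-\Lambda(\lambda)\le c\,\frac{\int\sum_i H(\lambda\partial_i f/2)\,e^{\lambda f}\,d\mu}{\int e^{\lambda f}\,d\mu}.
\]
Equivalently, $\frac{d}{d\lambda}\!\left(\frac{\Lambda(\lambda)}{\lambda}\right)\le \frac{c}{\lambda^2}\,\mathbb{E}_{\lambda}\big[\sum_i H(\lambda\partial_i f/2)\big]$, where $\mathbb{E}_\lambda$ denotes expectation against the tilted measure $e^{\lambda f}d\mu/\int e^{\lambda f}d\mu$.

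The next step is to control the right-hand side by a quantity not depending on $\lambda$ in a convenient way, using the convexity hypotheses on $H$. Here is where the two convexity assumptions enter. Since $H$ is convex and vanishes at the origin, $H(\lambda y)\le \lambda H(y)$ fails in general for $\lambda>1$; instead one wants an estimate that produces the exponent $\frac{c}{\alpha-c}$. The route I would take is the exponential/entropy duality: for the tilted measure, use the variational formula $\mathbb{E}_\lambda[g]\le \frac{1}{\alpha}\big(\Ent_{\mu}(e^{\lambda f})/\!\int e^{\lambda f}d\mu\big)+\frac{1}{\alpha}\log\int e^{\alpha g}\,d\mu_\lambda$ is not quite it — rather, directly bound $\mathbb{E}_\lambda[\sum_i H(\lambda\partial_i f/2)]$ by relating it back to $\int\exp\{\alpha\sum_i H(\partial_i f/2)\}\,d\mu$. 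The assumption that $x\mapsto H(\sqrt x)$ is convex is exactly what allows the homogenization in $\lambda$: writing $H(\lambda\partial_i f/2)=H\big(\sqrt{\lambda^2(\partial_i f/2)^2}\big)$ and using convexity of $H(\sqrt{\cdot})$ together with a splitting $\lambda^2=\lambda^2\cdot 1$, one gets $H(\lambda y/2)\le (1-t)H(0)+t\,H\big(\tfrac{\lambda}{2t}\sqrt{\ }\,\big)$-type bounds; choosing the split parameter appropriately (as a function of $\lambda$) and then applying Young's inequality $ab\le \tfrac{a^p}{p}+\tfrac{b^q}{q}$ in the form that trades a factor of $\lambda$ against an exponential moment, one arrives at
\[
\frac{c}{\lambda^2}\,\mathbb{E}_\lambda\Big[\sum_i H\big(\tfrac{\lambda\partial_i f}{2}\big)\Big]\le \frac{c}{\lambda^2}\Big(\frac{\Lambda(\lambda)}{?}+\log\int\exp\big\{\alpha\textstyle\sum_i H(\partial_i f/2)\big\}\,d\mu\Big).
\]

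Assembling this into the ODE and integrating from $\lambda=0$ (where $\Lambda(\lambda)/\lambda\to 0$ by the mean-zero condition) to $\lambda=1$ gives, after collecting the $\Lambda$ terms on one side,
\[
\Lambda(1)\le \frac{c}{\alpha-c}\,\log\int\exp\Big\{\alpha\sum_{i=1}^n H\big(\tfrac{\partial_i f}{2}\big)\Big\}\,d\mu,
\]
which is the claimed inequality after exponentiating. The condition $\alpha>c$ is precisely what makes the integration step converge (it is the analogue of $\alpha>c$ in \eqref{eq:bg}), and it is where the factor $\frac{c}{\alpha-c}$ is produced — a standard Gronwall/Herbst bookkeeping once the $\lambda$-dependence has been correctly homogenized.

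The main obstacle is the middle step: getting a clean bound on $\mathbb{E}_\lambda[\sum_i H(\lambda\partial_i f/2)]$ of the form ``constant times $\Lambda(\lambda)$ plus a $\lambda$-independent exponential moment,'' with the right powers of $\lambda$ so that the final integration yields exactly $\frac{c}{\alpha-c}$. This is where \emph{both} convexity hypotheses on $H$ are needed — convexity of $H$ itself to handle the sum and to apply the modified log-Sobolev inequality, and convexity of $H(\sqrt{\cdot})$ to perform the scaling in $\lambda$ (essentially because $(\partial_i f)^2$, not $\partial_i f$, is the natural variable when one wants to compare with a Gaussian-type exponential moment). If instead one tries the cruder route of bounding $H(\lambda y/2)$ by a multiple of $H(y/2)$ for all $\lambda\le 1$, one loses the sharp exponent; the delicate part is to keep track of the extremal split and recover $\frac{c}{\alpha-c}$ without slack, exactly paralleling — but in the modified, non-homogeneous setting — the computation behind \eqref{eq:bg}.
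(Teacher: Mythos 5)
Your setup is the right one (Herbst's differential inequality for $\Lambda(\lambda)=\log\int e^{\lambda f}d\mu$, with the modified log-Sobolev inequality applied to $\lambda f$), and you correctly sense that an entropy--exponential duality must produce the factor $\frac{c}{\alpha-c}$. But the proposal has a genuine gap precisely at the step you yourself flag as "the main obstacle": you never actually carry out the duality, and the displayed bound you "arrive at" contains a literal question mark. The correct move is not to bound $\mathbb{E}_\lambda[\sum_i H(\lambda\partial_i f/2)]$ via an entropic inequality for the tilted measure (which, as you noticed, would produce an exponential moment under $\mu_\lambda$ rather than under $\mu$). Instead one uses the variational characterization of $\Ent_\mu(e^{\lambda f})=\sup\{\int e^{\lambda f}g\,d\mu:\int e^g d\mu=1\}$ with the specific test function $g=\alpha\sum_i H(\lambda\partial_i f/2)-\log\int\exp\{\alpha\sum_i H(\lambda\partial_i f/2)\}d\mu$. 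This yields $\alpha\int e^{\lambda f}\sum_i H(\lambda\partial_i f/2)\,d\mu\le \Ent_\mu(e^{\lambda f})+\int e^{\lambda f}d\mu\cdot\log\int\exp\{\alpha\sum_i H(\lambda\partial_i f/2)\}d\mu$, and combined with the modified log-Sobolev inequality (which bounds the left side \emph{below} by $\frac{\alpha}{c}\Ent_\mu(e^{\lambda f})$) it gives $\Ent_\mu(e^{\lambda f})\le\frac{c}{\alpha-c}\int e^{\lambda f}d\mu\,\log\int\exp\{\alpha\sum_i H(\lambda\partial_i f/2)\}d\mu$ for $\alpha>c$, with the exponential moment under $\mu$, as needed.

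The second, independent gap is the homogenization in $\lambda$: you gesture at "splitting $\lambda^2=\lambda^2\cdot 1$" and Young's inequality, but no such argument is given, and this is the only place where the hypothesis that $x\mapsto H(\sqrt{x})$ is convex enters. The mechanism is to introduce $\omega(x)=\sup_{t>0}H(tx)/H(t)$, so that $H(\lambda\partial_i f/2)\le\omega(\lambda)H(\partial_i f/2)$, and to set $\beta(\lambda)=\log\int\exp\{\alpha\,\omega(\sqrt{\lambda})\sum_i H(\partial_i f/2)\}d\mu$. Convexity of $H(\sqrt{\cdot})$ makes $\omega(\sqrt{\cdot})$ convex (a supremum of convex functions), hence $\beta$ is convex with $\beta(0)=0$ (using $\omega(0)=0$, i.e.\ $H(0)=0$), so $t\mapsto\beta(t)/t$ is nondecreasing and $\beta(\lambda^2)/\lambda^2\le\beta(1)$ for $\lambda\le 1$, with $\omega(1)=1$. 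This is what replaces your undetermined "split parameter" and allows the differential inequality to be integrated from $0$ to $1$ with a $\lambda$-independent right-hand side. Without these two steps made explicit, the proof is not complete.
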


\begin{remark}
{\rm The classical log-Sobolev inequality corresponds to $H(x)=x^2$.  We therefore recover the result of \cite{bobkov-gotze}
(for $\mathbb{R}^n$).

For the measure $\mu_p$, $p \in (1,2)$ mentioned above, one can prove \eqref{eq:modifiedlsob} \cite{GGM05,barthe-roberto-08} with $H$ that compares to $\max(|x|^q,x^2)$, with $q = \frac{p-1}{p} > 2$ the conjugate of $p$. 
At the price of changing the constant $c$, it is easy to construct an other $H$ that satisfies the assumption of the theorem for $\mu_p$ for which the conclusion applies.

By mean of \eqref{eq:modifiedlsob} for  more general log-concave measures with tail between exponential and Gaussian \cite{GGM07,barthe-roberto-08} the conclusion of the theorem can also be extended to such a class of
probability measures.

It should be noticed that, except for the Gaussian measure, the optimal constant in Inequality \eqref{eq:modifiedlsob} is not known.   Therefore the constraint $\alpha > c$ might be sub-optimal. }
\end{remark}

\begin{proof}
We follow \cite{bobkov-gotze}. Fix $\lambda, \alpha \geq 0$. 
Using the variational characterization of the entropy 
$\Ent_\mu(e^{\lambda f}) = \sup \{ \int e^{\lambda f} gd\mu : \int e^g d\mu =1\}$, 
we get, for 
$$
g= \alpha \sum_{i=1}^n H \left( \frac{\lambda}{2} \partial_i f \right)
- \log \int \exp\left\{ \alpha \sum_{i=1}^n H \left( \frac{\lambda}{2} \partial_i f \right) \right\} d\mu ,
$$
$$
\Ent_\mu(e^{\lambda f}) 
\geq 
\alpha \int  e^{\lambda f} \sum_{i=1}^n H \left( \frac{\lambda}{2} \partial_i f \right) d\mu 
- \int e^{\lambda f} d\mu \log \int \exp \left\{ \alpha \sum_{i=1}^n H \left( \frac{\lambda}{2} \partial_i f \right) \right\} d\mu .
$$
On the other hand, the modified log-Sobolev inequality \eqref{eq:modifiedlsob} applied to $\lambda f$ ensures that
$$
\alpha \int  e^{\lambda f} \sum_{i=1}^n H \left( \frac{\lambda}{2} \partial_i f \right) d\mu 
\geq 
\frac{\alpha}{c}\Ent_\mu(e^{\lambda f}) 
.
$$
Therefore, for $\alpha > c$,
$$
\Ent_\mu(e^{\lambda f})
\leq 
\frac{c}{\alpha -c}
\int e^{\lambda f} d\mu \log \int \exp \left\{ \alpha \sum_{i=1}^n H \left( \frac{\lambda}{2} \partial_i f \right) \right\} d\mu
$$
The rest of the argument is Herbst's argument. Set $G(\lambda)\coloneqq \int e^{\lambda f}d\mu$ and observe that
$$
\Ent_\mu(e^{\lambda f}) = \lambda G'(\lambda) - G(\lambda) \log G(\lambda) 
$$
so that the inequality above can be recast as
$$
\frac{G'(\lambda)}{\lambda G(\lambda)} - \frac{\log(G(\lambda))}{\lambda^2}
= \frac{d}{d\lambda}\left( \frac{1}{\lambda} \log G (\lambda) \right) 
\leq 
\frac{c}{\alpha -c}
\frac{1}{\lambda^2} \log \int \exp \left\{ \alpha \sum_{i=1}^n H \left( \frac{\lambda}{2} \partial_i f \right) \right\} d\mu .
$$
Now introduce, for $x \geq 0$,
$\omega(x) = \sup_{t >0} \frac{H(tx)}{H(t)}$ hence it follows that:
$$
\frac{d}{d\lambda}\left( \frac{1}{\lambda} \log G (\lambda) \right) 
\leq 
\frac{c}{\alpha -c} \frac{\beta(\lambda^2)}{\lambda^2}
$$
with 
$$
\beta(\lambda) \coloneqq 
\log \int \exp \left\{ \alpha \omega(\sqrt{\lambda}) \sum_{i=1}^n H \left( \frac{\partial_i f}{2} \right) \right\} d\mu .
$$
Since $H(\sqrt{x})$ is convex, it is easy to check that $\omega(\sqrt{x})$ is convex and therefore that $\beta$ is also convex. Since $\omega(0)=0$ ($H$ vanishes at $0$), the map $t \to \beta(t)/t$
is non-decreasing and therefore 
$$
\frac{\beta(\lambda^2)}{\lambda^2} \leq \beta(1) =
\log \int \exp \left\{ \alpha \omega(1) \sum_{i=1}^n H \left( \frac{\partial_i f}{2} \right) \right\} d\mu
=
\log \int \exp \left\{ \alpha \sum_{i=1}^n H \left( \frac{\partial_i f}{2} \right) \right\} d\mu
$$
since $\omega(1)=1$. Integrating the differential inequality, we obtain
$$
\log G(1) - \lim_{\lambda \to 0} \frac{1}{\lambda} \log G(\lambda) \leq \frac{c}{\alpha -c } \beta(1)
$$
which leads to the desired conclusion since 
$\lim_{\lambda \to 0} \frac{\log G(\lambda)}{\lambda}  = \int f d\mu=0$.
\end{proof}

\subsection{Exponential integrability of second order}

In \cite{CMP21} the authors are interested in the following second order type exponential integrability. They prove that, given $\beta \geq 1$ and  $M>0$, all  $f$ smooth enough, with $\int f d\gamma_n =0$, and such that  $\int e^{(\frac{\beta}{2}|Lf|)^\beta} d\gamma_n \leq M$ satisfy
$$
\int e^{|f|^\beta} d\gamma_n \leq C(M)
$$
for some constant $C(M)$ (independent of $f$), where $L = \Delta - x \cdot \nabla$ is the Ornstein-Uhlenbeck operator. 
%Note that the case $\beta \in (0,1)$ is also considered in \cite{CMP21}.
In this section, we give a quantitative version of this statement in the special case $\beta=1$. Our result extends \cite{CMP21} (for $\beta=1$) to the class of measures on $\mathbb{R}^n$ that satisfy the log-Sobolev Inequality
(this class is larger that the set of all strongly log-concave probability measures considered in \cite{CMP21}) and to the general framework allowed by $\Gamma_2$ formalism. Our result  reads as follows.

\begin{thm} \label{thm:CMP-Lf}
Let $L$ be   some Markov diffusion operator reversible with respect to some probability measure $\mu$. Assume that $\mu$ satisfies the following log-Sobolev inequality 
\begin{equation} \label{eq:log-sob}
    \Ent_\mu(e^f) \leq c \int e^f \Gamma(f) d\mu
\end{equation}
for some $c \in (0,\infty)$ and any $f$ smooth enough.
Then, for any $\alpha > c$ and any $f$ with $\int fd\mu=0,$
the following inequality holds
$$
\log  \int e^f d\mu  
\leq \frac{c}{\alpha -c} \int e^{\alpha |Lf|} d\mu .
$$
Moreover we have:
$$
\log \int e^{|f|} d\mu 
\leq 
\left( \frac{c}{e \alpha} + \log 2 +  \frac{2c}{\alpha -c} \right)
 \int e^{\alpha |Lf|} d\mu .
$$
\end{thm}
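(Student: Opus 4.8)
The plan is to prove the first displayed inequality by the Herbst-type argument already employed in the proof of Theorem~\ref{th:bobkov-gotze-modified}, and then to deduce the bound on $\int e^{|f|}\,d\mu$ from it by splitting $|f|$ into its positive and negative parts and applying the first inequality to $\pm f$.

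For the first inequality, fix $\lambda,\alpha>0$. Using the variational characterization of entropy, $\Ent_\mu(e^{\lambda f}) = \sup\{\int e^{\lambda f}g\,d\mu:\int e^g\,d\mu=1\}$, take $g=\alpha|L(\lambda f)| - \log\int e^{\alpha|L(\lambda f)|}\,d\mu = \alpha\lambda|Lf| - \log\int e^{\alpha\lambda|Lf|}\,d\mu$, which gives
\[
\Ent_\mu(e^{\lambda f}) \geq \alpha\lambda\int e^{\lambda f}|Lf|\,d\mu - \Big(\int e^{\lambda f}\,d\mu\Big)\log\int e^{\alpha\lambda|Lf|}\,d\mu.
\]
On the other hand, the log-Sobolev inequality \eqref{eq:log-sob} applied to $\lambda f$, combined with the diffusion integration-by-parts identity $\int e^{\lambda f}\Gamma(f)\,d\mu$ being controlled by $\int |Lf| e^{\lambda f}\,d\mu$ up to the factor $\lambda$ —this is the point where I would use the chain rule \eqref{eq:chainrule} and \eqref{eq:ibp} to write $\int \Gamma(e^{\lambda f}, f)\,d\mu = -\int e^{\lambda f} Lf\,d\mu$, hence $\lambda\int e^{\lambda f}\Gamma(f)\,d\mu = -\int e^{\lambda f}Lf\,d\mu \leq \int e^{\lambda f}|Lf|\,d\mu$— yields $\Ent_\mu(e^{\lambda f}) \leq c\lambda\int e^{\lambda f}|Lf|\,d\mu$. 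Comparing the two bounds, for $\alpha>c$ we obtain
\[
\Ent_\mu(e^{\lambda f}) \leq \frac{c}{\alpha-c}\Big(\int e^{\lambda f}\,d\mu\Big)\log\int e^{\alpha\lambda|Lf|}\,d\mu.
\]
Setting $G(\lambda)=\int e^{\lambda f}\,d\mu$ and using $\Ent_\mu(e^{\lambda f}) = \lambda G'(\lambda) - G(\lambda)\log G(\lambda)$ turns this into the differential inequality $\frac{d}{d\lambda}\big(\frac1\lambda\log G(\lambda)\big) \leq \frac{c}{\alpha-c}\,\frac{1}{\lambda^2}\log\int e^{\alpha\lambda|Lf|}\,d\mu$. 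Here I would use that $t\mapsto \log\int e^{t|Lf|}\,d\mu$ is convex and vanishes at $0$, so $\frac1\lambda\log\int e^{\alpha\lambda|Lf|}\,d\mu$ is non-decreasing in $\lambda$ and bounded, for $\lambda\in(0,1]$, by its value at $\lambda=1$. Integrating from $0$ to $1$ and using $\lim_{\lambda\to0}\frac1\lambda\log G(\lambda)=\int f\,d\mu=0$ gives $\log\int e^f\,d\mu \leq \frac{c}{\alpha-c}\log\int e^{\alpha|Lf|}\,d\mu \leq \frac{c}{\alpha-c}\int e^{\alpha|Lf|}\,d\mu$, the last step by Jensen ($\log\int e^X\,d\mu \le \int e^X\,d\mu$ when $\int e^X d\mu \ge 1$, or more crudely $\log x \le x$).

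For the second inequality, I would write $e^{|f|} \leq e^f + e^{-f}$, apply the first inequality to both $f$ and $-f$ (noting $L(-f)=-Lf$, so $|L(-f)|=|Lf|$, and both have $\mu$-mean zero), and get $\log\int e^{\pm f}\,d\mu \leq \frac{c}{\alpha-c}\int e^{\alpha|Lf|}\,d\mu =: \frac{c}{\alpha-c}\,S$. Hence $\int e^{|f|}\,d\mu \leq 2\exp\{\frac{c}{\alpha-c}S\}$, so $\log\int e^{|f|}\,d\mu \leq \log 2 + \frac{c}{\alpha-c}S$. To absorb the additive $\log 2$ and the factor $\frac{c}{\alpha-c}$ applied to $S$ into a single multiple of $S$, I would use the elementary bound $x \leq \frac{1}{e\alpha}e^{\alpha x}$ for $x\geq 0$ applied with $x=|Lf|$ and integrated, giving $\frac{1}{e\alpha}S \ge \int |Lf|\,d\mu$, hence also $S \ge \int e^{\alpha|Lf|}\,d\mu \cdot(\text{something}\ge1)\ge 1$ whenever $Lf\not\equiv0$; more directly $S=\int e^{\alpha|Lf|}\,d\mu \ge 1$ always. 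Then $\log 2 \le (\log 2)\,S$ and $\frac{c}{\alpha-c}S$, plus the extra $\frac{c}{e\alpha}S$ coming from controlling a stray term, combine to give the stated constant $\frac{c}{e\alpha}+\log 2+\frac{2c}{\alpha-c}$; the precise bookkeeping of which crude estimate produces the $\frac{c}{e\alpha}$ and the $2c/(\alpha-c)$ (rather than $c/(\alpha-c)$) is the only delicate point, and I expect it comes from applying $x\le \frac{1}{e\alpha}e^{\alpha x}$ once and the first inequality twice.

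The main obstacle is the passage from $\Gamma(f)$ to $|Lf|$: one must verify that under the diffusion and reversibility hypotheses, $\lambda\int e^{\lambda f}\Gamma(f)\,d\mu = -\int e^{\lambda f}Lf\,d\mu$, which requires the chain rule $\Gamma(e^{\lambda f},f)=\lambda e^{\lambda f}\Gamma(f)$ together with the integration-by-parts formula \eqref{eq:ibp}, and then to bound this by $\int e^{\lambda f}|Lf|\,d\mu$. Everything else is the by-now-standard Herbst scheme and elementary inequalities; the constant-chasing in the second part is routine once the crude bounds $\log x\le x$ and $x\le \frac{1}{e\alpha}e^{\alpha x}$ are in place.
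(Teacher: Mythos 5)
There is a genuine gap in your treatment of the first inequality, and it occurs precisely at the step you try to finesse with monotonicity: the integration of the Herbst differential inequality near $\lambda=0$. You correctly observe that $\lambda\mapsto\frac1\lambda\log\int e^{\alpha\lambda|Lf|}\,d\mu$ is bounded on $(0,1]$ by its value at $\lambda=1$. But the right-hand side of your differential inequality is $\frac{c}{\alpha-c}\,\frac{1}{\lambda^2}\log\int e^{\alpha\lambda|Lf|}\,d\mu$, which equals $\frac{c}{\alpha-c}\,\frac{1}{\lambda}\cdot\bigl(\frac{1}{\lambda}\log\int e^{\alpha\lambda|Lf|}\,d\mu\bigr)$ and hence is only bounded by $\frac{\mathrm{const}}{\lambda}$. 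More precisely, as $\lambda\to0$, $\log\int e^{\alpha\lambda|Lf|}\,d\mu\sim\alpha\lambda\int|Lf|\,d\mu$, so the integrand behaves like $\frac{\alpha\int|Lf|\,d\mu}{\lambda}$, which is not integrable over $(0,1]$ unless $\int|Lf|\,d\mu=0$. Your conclusion $\log\int e^f\,d\mu\le\frac{c}{\alpha-c}\log\int e^{\alpha|Lf|}\,d\mu$ therefore does not follow; if it were provable this way it would in fact be strictly stronger than the statement (a $\log$ instead of the moment itself), which should already make you suspicious.

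The missing idea is to use the \emph{signed} quantity $-Lf$ rather than $|Lf|$ as the dual test function in the entropic inequality, together with reversibility. If you take $g=-t\lambda Lf-\log\int e^{-t\lambda Lf}\,d\mu$ rather than $g=\alpha\lambda|Lf|-\cdots$, the LSI comparison goes through exactly as you wrote (since $\Ent_\mu(e^{\lambda f})\le c\lambda\int e^{\lambda f}(-Lf)\,d\mu$ needs no absolute value), but now reversibility gives $\int Lf\,d\mu=0$. Expanding $\int e^{-t\lambda Lf}\,d\mu=1+\sum_{k\ge2}\frac{(-t\lambda)^k}{k!}\int(Lf)^k\,d\mu$ (the $k=1$ term vanishing), and using $\log(1+x)\le x$, one obtains $\frac{1}{\lambda^2}\log\int e^{-t\lambda Lf}\,d\mu\le\sum_{k\ge2}\frac{t^k\lambda^{k-2}}{k!}\int|Lf|^k\,d\mu\le\int e^{t|Lf|}\,d\mu$ for $\lambda\in(0,1]$. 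The right-hand side is now independent of $\lambda$, so the integration from $0$ to $1$ converges and yields exactly $\frac{c}{\alpha-c}\int e^{\alpha|Lf|}\,d\mu$ (no logarithm). The absolute value reappears naturally in the final bound, not in the test function.

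For the second inequality, your route via $e^{|f|}\le e^{f}+e^{-f}$ applied to $f$ and $-f$ is correct, is simpler than the paper's splitting into $f_+$ and $f_-$, and in fact yields the sharper constant $\bigl(\log 2+\frac{2c}{\alpha-c}\bigr)\int e^{\alpha|Lf|}\,d\mu$: both $\int e^{\pm f}\,d\mu\ge1$ by Jensen, so $\log(a+b)\le\log a+\log b+\log2$ applies, and since $\int e^{\alpha|Lf|}\,d\mu\ge1$ you can absorb the additive $\log 2$. You did not need to hunt for a $\frac{c}{e\alpha}$ term; that extra summand arises in the paper's proof because they control $\int|f|\,d\mu$ separately (via Poincar\'e and Cauchy--Schwarz, then $x^2\le\tfrac4{e^2}e^x$), a step made necessary by their decomposition into $f_\pm$, which do not have mean zero. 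Your decomposition avoids that detour entirely.
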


\begin{remark}
For the Gaussian measure $c=1/2$. The constraint $\alpha > 1/2$ agrees 
with the optimal threshold $\beta/2=1/2$ of \cite{CMP21}.
\end{remark}

\begin{proof}[Proof of Theorem \ref{thm:CMP-Lf}]
By the diffusion property, $\Gamma(e^f,f)=e^f \Gamma(f)$ so that the integration by parts formula ensures that
$\int e^f \Gamma(f) d\mu = \int \Gamma(e^f,f) d\mu = -\int e^f Lf d\mu$.
In particular, the log-Sobolev inequality can be recast as
$$
\Ent_\mu(e^f) \leq c \int e^f (-Lf) d\mu .
$$
Next, if $f$ is such that $\int e^fd\mu=1$, the entropic inequality ensures 
that, for any $t >0$,
$$
\int e^f (-Lf) d\mu 
\leq 
\frac{1}{t} \Ent_\mu(e^f) + \frac{1}{t} \log \left( \int e^{-tLf} d\mu \right) .
$$
It follows that for any $t > c$ 
$$
\Ent_\mu(e^f) \leq 
\frac{c}{t-c} \log \left( \int e^{-tLf} d\mu \right) .
$$
In turn, applying this to $\lambda f$, the following inequality holds by homogeneity
$$
\Ent_\mu(e^{\lambda f}) \leq 
\frac{c}{t-c} \int e^{\lambda f}d\mu \log \left( \int e^{-t\lambda Lf} d\mu \right), \qquad \lambda \in [0,1] .
$$
Set as usual $H(\lambda)=\int e^{\lambda f}d\mu$. Then the latter reads
$$
\lambda H'(\lambda) - H(\lambda) \log H(\lambda) \leq 
\frac{c}{t-c} H(\lambda)  \log \left( \int e^{-t\lambda Lf} d\mu \right) . 
$$
Dividing both sides by $\lambda^2H(\lambda)$ as in the usual Herbst's argument, we obtain
$$
\frac{d}{d\lambda} \left( \frac{\log H(\lambda)}{\lambda} \right) = 
\frac{H'(\lambda)}{\lambda H(\lambda)} - \frac{\log H(\lambda)}{\lambda^2} 
\leq 
\frac{c}{t-c}  \frac{1}{\lambda^2}\log \left( \int e^{-t \lambda Lf} d\mu \right) , \qquad t > c. 
$$
Jensen's inequality $\log \left( \int e^{-t\lambda Lf} d\mu \right) \leq \lambda \log \left( \int e^{-t Lf} d\mu \right)$ would not be enough to make the right hand side integrable at $0$ (with respect to $\lambda$). 
%{\color{red}Cyril: here we are loosing a lot. There might be a smarter way.} 
To overcome this lack of integrability, we will expand the exponential, taking advantage of the fact that $\int Lf d\mu=0$ by reversibility (thus making the right hand side of order $\int (Lf)^2d\mu$ when $\lambda \to 0$). More precisely, we have for any $\lambda \in (0,1]$
\begin{align*}
\frac{1}{\lambda^2} \log \left( \int e^{-t \lambda  Lf} d\mu \right) & = 
\frac{1}{\lambda^2} \log \left( 1 + \sum_{k=2}^\infty \frac{(-t \lambda)^k}{k!}\int (Lf)^k  d\mu \right)  & (\mbox{since } \int Lfd\mu=0)\\
& \leq 
\sum_{k=2}^\infty \frac{t^k \lambda^{k-2}}{k!}\int |Lf|^k  d\mu & (\mbox{since } \log(1+x) \leq x) \\
& \leq 
\int e^{t |Lf|} d\mu .
\end{align*}
Finally, we get
$$
\frac{d}{d\lambda} \left( \frac{\log H(\lambda)}{\lambda} \right) 
\leq 
\frac{c}{t-c}  \int e^{t |Lf|} d\mu
$$
which leads to the first desired conclusion after integration from $\lambda=0$ to $\lambda=1$ since 
$\lim_{\lambda \to 0} \frac{\log H(\lambda)}{\lambda} = \int f d\mu$.

The second part of the theorem is a consequence of the first.
 Given $f$ with $\int f d\mu=0$, set $f_+=\max(f,0)$ and $f_-=\max(-f,0)$. Then,
since $\log(a+b) \leq \log a + \log b + \log 2$ for all $a,b \geq 1$, the first part of the theorem applied twice by scaling argument implies that
\begin{align*}
\log \int e^{|f|} d\mu 
& \leq 
\log \left( \int e^{f_+} d\mu +  \int e^{f_-} d\mu \right) \\
& \leq 
\log \left( \int e^{f_+} d\mu \right)
+
  \log \left(\int e^{f_-} d\mu \right) + \log 2 
\\
& \leq 
\int (f_+ + f_-) d\mu + \log 2 +  \frac{c}{\alpha -c} \left( \int e^{\alpha |Lf_+|} d\mu + \int e^{\alpha |Lf_-|} d\mu \right) \\
& \leq 
\int |f| d\mu + \log 2 +  \frac{2c}{\alpha -c} \int e^{\alpha |Lf|} d\mu .
\end{align*}
Since $\mu$ satisfies the log-Sobolev inequality \eqref{eq:log-sob}, it satisfies a Poincar\'e inequality with constant $c/2$ (see \textit{e.g.}\ \cite[Chapter 1]{ane}). In particular (recall that $\int fd\mu=0$)
$$
\int f^2 d\mu \leq \frac{c}{2} \int \Gamma(f) d\mu .
$$
Since
$\int \Gamma(f) d\mu = - \int f Lf d\mu$, the Cauchy-Schwartz Inequality implies
$$
\int f^2 d\mu \leq
\frac{c}{2} \sqrt{\int f^2 d\mu \int (Lf)^2 d\mu} 
$$
from which we deduce that
$$
\int f^2 d\mu \leq
\frac{c^2}{4} \int (Lf)^2 d\mu .
$$
Using again Cauchy-Schwartz' inequality, we obtain
$$
\int |f| d\mu \leq \sqrt{ \int f^2 d\mu  }
\leq 
\frac{c}{2} \sqrt{\int (Lf)^2 d\mu} .
$$
Now we observe that $x^2 \leq \frac{4}{e^2}e^{x}$ for $x \geq 0$, hence
$\int \alpha^2 (Lf)^2 d\mu \leq \frac{4}{e^2} \int e^{\alpha |Lf|} d\mu$ and therefore
$$
\int |f| d\mu
\leq 
\frac{c}{2\alpha} \sqrt{\frac{4}{e^2} \int e^{\alpha |Lf|} d\mu}
\leq 
\frac{c}{e \alpha} \int e^{\alpha |Lf|} d\mu , \qquad \alpha >0 .
$$
We get for any $\alpha > c$
\begin{align*}
\log \int e^{|f|} d\mu 
& \leq 
\frac{c}{e \alpha} \int e^{\alpha |Lf|} d\mu +
\log 2 
+
 \frac{2c}{\alpha -c} \int e^{\alpha |Lf|} d\mu \\
& \leq 
 \left( \frac{c}{e \alpha} + \log 2 +  \frac{2c}{\alpha -c} \right)
 \int e^{\alpha |Lf|} d\mu .
\end{align*}
The desired conclusion follows.
\end{proof}

%%%%%%%%%%%%%%%%%%%%%%%%%%%%%%%%%%%%%%%%%%%%%%%%%%%%%%%%%%%%%%%%%%%%%%%%%%%%%%%%%%
%%%%%%%%%%%%%%%%%%%%%%%%%%%%%%%%%%%%%%%%%%%%%%%%%%%%%%%%%%%%%%%%%%%%%%%%%%%%%%%%%%
%%%%%%%%%%%%%%%%%%%%%%%%%%%%%%%%%%%%%%%%%%%%%%%%%%%%%%%%%%%%%%%%%%%%%%%%%%%%%%%%%%

\section{A convexity argument}

In this section we will give an alternative proof of Inequality \eqref{eq:IR} (in a slight modified form). Our proof is based on an argument of duality that applies to part of the more general situation considered in \cite{CMP}.

\subsection{Inequality \eqref{eq:IR} revisited}

Set for simplicity $\gamma=\gamma_1$ for the standard Gaussian measure on the line, with density $e^{-x^2/2}/\sqrt{2\pi}$. Our next result is one of our main results and is, as already mentioned in Section \ref{sec:first}, some sort of exponential Hardy-type inequality.

\begin{thm} \label{th:exp-hardy}
For all $f \colon [0,\infty) \to \mathbb{R}$ with $f(0)=0$, it holds that
$$
\log \int_0^\infty e^{|f(x)|} e^{-\frac{x^2}{2}} dx 
\leq  
\int_0^\infty \frac{e^{|f'(x)|^2/2}}{\sqrt{1+\frac{|f'(x)|^2}{2}}} e^{-\frac{x^2}{2}} dx
+
5.14  .
$$
\end{thm}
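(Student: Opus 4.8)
The plan is to reduce the statement to a one-dimensional convex-duality inequality, following the philosophy announced in the introduction. Write $d\gamma = e^{-x^2/2}dx/\sqrt{2\pi}$, so that the claim is equivalent to
$$
\log \int_0^\infty e^{|f|}\,d\gamma \leq \int_0^\infty G(|f'|)\,d\gamma + C_0, \qquad G(t) := \frac{e^{t^2/2}}{\sqrt{1+t^2/2}},
$$
for an absolute constant $C_0$ (the $5.14$ absorbing the normalisation $\log\sqrt{2\pi}$ and other numerical losses). First I would dispose of the absolute value: since $f(0)=0$ and $|f'|$ is unchanged, it suffices to bound $\int_0^\infty e^{f}\,d\gamma$ and $\int_0^\infty e^{-f}\,d\gamma$ separately, i.e. one may assume $f \geq 0$ is of the form $f(x) = \int_0^x |f'|$, losing only a factor $2$ (hence a $\log 2$ in the constant).

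The heart of the argument is a pointwise/duality estimate for $e^{f(x)}$. Since $f(x) = \int_0^x f'(s)\,ds$ and $G$ is convex increasing with $G(0)=1$, the natural move is to introduce the Legendre-type dual of $t \mapsto G(t)$ or rather of $\log G$. The key inequality I expect to need is: for every $x>0$ there is a ``good'' splitting showing that $f(x) - $ (something integrable against $\gamma$) is controlled by $\int_0^x G(|f'|)\,d\gamma$. Concretely, one wants for each $t \geq 0$ a bound of the shape $st \leq G(t) + \Psi(s)$ (Young's inequality for the pair $G$, $\Psi$ with $\Psi$ the convex conjugate of $G$), applied with $s = s(x)$ chosen adapted to the Gaussian weight — morally $s(x) \sim x$, since $\int_0^x f' \,d\gamma$-type quantities compare $f'$ against the density $e^{-s^2/2}$. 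Because $G(t) = e^{t^2/2}/\sqrt{1+t^2/2}$ behaves like $e^{t^2/2}$, its conjugate $\Psi(s)$ behaves like $\frac{s^2}{2} - \frac12\log(1+s^2)$ up to lower order, so $\int_0^\infty e^{\Psi(s)}\,ds < \infty$ — this finiteness is exactly what powers the whole estimate and is the reason the denominator $\sqrt{1+t^2/2}$ (rather than $1$) is needed, echoing the optimality discussion in Section~\ref{sec:first}.

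The steps, in order, would be: \emph{(i)} reduce to $f \geq 0$ monotone with $f(0)=0$ as above; \emph{(ii)} write $e^{f(x)} = \exp\{\int_0^x f'(s)\,ds\}$ and apply the Young inequality $f'(s)\cdot\sigma(s) \leq G(f'(s)) + \Psi(\sigma(s))$ for a cleverly chosen weight $\sigma(s)$, then integrate in $s$ and exponentiate to get $e^{f(x)} \leq \exp\{\int_0^x G(|f'|)\,ds\}\cdot\exp\{-\int_0^x \Psi(\sigma)\,ds\}$ — but this naive version integrates $G(|f'|)$ against Lebesgue measure, not $\gamma$, so the weight $\sigma$ must be tuned to convert $ds$ into $d\gamma$; \emph{(iii)} the right way is instead to bound $f(x)$ additively: $f(x) \leq \int_0^x G(|f'(s)|)\,e^{-s^2/2}\,\frac{ds}{\sqrt{2\pi}} \cdot (\text{const}) + R(x)$ where $R(x)$ collects the conjugate terms $\int \Psi$, which must be uniformly bounded or at least have $\int_0^\infty e^{R(x)}\,d\gamma < \infty$; \emph{(iv)} exponentiate, integrate $dx$ over $[0,\infty)$ against the Gaussian weight, pull the (constant multiple of the) $\int G(|f'|)\,d\gamma$ term out via $e^{A}$ being handled by Jensen or simply since it is a constant independent of $x$, and bound $\int_0^\infty e^{R(x)}e^{-x^2/2}dx$ by an explicit number; \emph{(v)} take logarithms, add back the $\log 2$ and $\log\sqrt{2\pi}$, and check the numerics close under $5.14$. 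The main obstacle I anticipate is step \emph{(ii)}--\emph{(iii)}: making the weight $\sigma(s)$ do double duty — simultaneously realising the Young inequality for the pair $(G,\Psi)$ \emph{and} producing the Gaussian density so that the $G$-term appears integrated against $d\gamma$ rather than $ds$ — is the delicate point, and getting the residual $\int_0^\infty e^{R(x)}\,d\gamma$ to be a genuine finite constant (this is where $G$'s subexponential correction $1/\sqrt{1+t^2/2}$ is essential) requires a careful choice of $\sigma(s)$, plausibly $\sigma(s) = s$ or $\sigma(s)$ solving $G'(\sigma) = $ something, together with the sharp Gaussian tail bound $\int_r^\infty e^{-x^2/2}dx \leq e^{-r^2/2}/r$ already used in Lemma~\ref{lem:paris}. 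Tracking all numerical constants to land below $5.14$ will be the other tedious-but-routine part.
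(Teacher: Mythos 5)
Your proposal identifies the paper's strategy correctly: dominate $|f(x)|$ pointwise via a weighted Young-type splitting of $\int_0^x f'$, integrate against the Gaussian weight, factor out $\int_0^\infty G(|f'|)\,d\gamma$ as a constant, and bound the residual's exponential Gaussian integral by a number. You also flag the single delicate point — choosing the weight $\sigma(s)$ so that Young's inequality closes \emph{and} the $G$-term lands against $d\gamma$ rather than $ds$. However, that is exactly where the proposal stalls, and both candidates you float ($\sigma(s)=s$, or $\sigma$ solving $G'(\sigma)=\cdots$) are wrong.

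The weight that makes everything click is $\sigma(t) = e^{t^2/2}$, the reciprocal of the unnormalized Gaussian density. Writing $|f(x)| = \bigl| \int_0^x f'(t)\, e^{t^2/2}\, e^{-t^2/2}\, dt \bigr|$ and applying Young's inequality $ab \leq G(a) + G^*(b)$ with $a=|f'(t)|$, $b=e^{t^2/2}$ yields
\begin{equation*}
|f(x)| \leq \int_0^x G(|f'(t)|)\, e^{-t^2/2}\,dt + \int_0^x G^*\bigl(e^{t^2/2}\bigr)\, e^{-t^2/2}\,dt ,
\end{equation*}
so the first integral is already against the Gaussian, and the residual is $R(x) = \int_0^x G^*(e^{t^2/2})\,e^{-t^2/2}\,dt$. (This also makes your $e^{\pm f}$ splitting superfluous — the absolute value passes inside directly — saving the $\log 2$.) What you labelled ``tedious-but-routine'' is in fact the bulk of the work: with $G(x)=e^{V(x)}$, $V(x)=\tfrac{x^2}{2}-\tfrac12\log(1+\tfrac{x^2}{2})$, one must identify $G^*(e^{t^2/2})e^{-t^2/2}$ in terms of $W^{-1}(t^2/2)$ with $W=V+\log V'$, prove that $G^*(e^{t^2/2})e^{-t^2/2} - t \leq 0$ on an initial range and $\leq -1.228/t$ for large $t$, and from that bound $\int_0^\infty e^{R(x)-x^2/2}\,dx$ by an explicit number; this is the paper's Lemma~\ref{lem:technical} and occupies roughly a page of careful estimates, so it is not a formality one can wave away after the structure is set.
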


Before moving to the proof of Theorem \ref{th:exp-hardy}, let us show how to recover Inequality \eqref{eq:IR} from it. 

\begin{coro} \label{cor:IR}
For any dimension $n$ and any $f$ with $\int f d\gamma_n =0$, the following inequality holds
$$
\log \int e^f d\gamma_n 
\leq 
8 \int_{\mathbb{R}^n} \frac{e^{\frac{|\nabla f|^2}{2}}}{ \sqrt{1+ \frac{|\nabla f|^2}{2}}} d\gamma_n 
\leq 
14 \int_{\mathbb{R}^n} \frac{e^{\frac{|\nabla f|^2}{2}}}{1+ |\nabla f|} d\gamma_n.
$$
\end{coro}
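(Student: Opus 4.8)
The plan is to deduce the corollary from Theorem~\ref{th:exp-hardy} in two stages: first apply the one-dimensional reduction and Hardy-type machinery of Section~\ref{sec:first} to pass from the half-line statement to a clean bound for $\gamma_n$, then absorb the additive constant into the integral term and finally compare the two weights $\sqrt{1+|\nabla f|^2/2}$ and $1+|\nabla f|$.

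\emph{Step 1: from the half-line to $\gamma_n$.} Theorem~\ref{th:exp-hardy} says precisely that the Gaussian measure $\gamma_1$ satisfies the exponential Hardy-type inequality \eqref{eq:reduction} with $G(x)=e^{x^2/2}/\sqrt{1+x^2/2}$ and $F(x)=e^x\cdot e^{5.14}$ --- up to the cosmetic point that \eqref{eq:reduction} has the prefactor $e^{f(0)}$ and the $-\tfrac12 e^{f(0)}$ term, both of which are harmless since the inequality of Theorem~\ref{th:exp-hardy} is invariant under adding a constant to $f$ (replace $f$ by $f-f(0)$) and since dropping $-\tfrac12 e^{f(0)}$ only weakens the right-hand side. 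The function $G$ here is increasing and convex (this convexity is exactly why the paper chose $G(x)=e^{x^2/2}/\sqrt{1+x^2/2}$ rather than $e^{x^2/2}/(1+x)$), so Lemma~\ref{lem:paris} applies with $a=e^{5.14}$, $b=1$, $c=e^{5.14}$, i.e.\ $a=c$ so the $(a-2c)$ term is $-a<0$ and may be discarded. One needs the constant $d=\sqrt{\pi/2}\,\max_{x\ge0}x/G(x)=\sqrt{\pi/2}\,\max_{x\ge0}\sqrt{1+x^2/2}\,x\,e^{-x^2/2}$; a direct one-variable optimization shows this maximum is finite and small, and I expect $d+b$ to come out below the target constant $8$. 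The output of Lemma~\ref{lem:paris} is then
$$
\int e^g\,d\gamma_n \;\le\; 1 + a\,\exp\Big\{(d+b)\!\int G(|\nabla g|)\,d\gamma_n\Big\}
$$
for all smooth $g$ with $\int g\,d\gamma_n=0$.

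\emph{Step 2: cleaning up the bound.} Taking logarithms and using $\log(1+a e^{u})\le \log(2a)+u$ when $ae^u\ge1$ (and handling the other regime trivially) turns the previous display into $\log\int e^g\,d\gamma_n \le C_0 + (d+b)\int G(|\nabla g|)\,d\gamma_n$ for an explicit constant $C_0$. Since $G\ge1$ everywhere, one has $C_0 \le C_0 \int G(|\nabla g|)\,d\gamma_n$, so the additive constant is absorbed at the cost of replacing $d+b$ by $d+b+C_0$; choosing the numerical constants carefully (this is where the slack in the "$5.14$" and in the value of $d$ is spent) yields the coefficient $8$, giving the first inequality of the corollary.

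\emph{Step 3: comparing the weights.} The second inequality is the elementary pointwise bound $\dfrac{e^{t^2/2}}{\sqrt{1+t^2/2}}\le \dfrac{14}{8}\cdot\dfrac{e^{t^2/2}}{1+t}$ for all $t\ge0$, i.e.\ $8(1+t)\le 14\sqrt{1+t^2/2}$, which reduces to checking $64(1+t)^2\le 196(1+t^2/2)$, a quadratic inequality in $t$ that holds for all $t\ge0$ (the discriminant is negative). Integrating against $\gamma_n$ finishes the proof.

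\emph{Main obstacle.} The only real work is bookkeeping the numerical constants so that everything fits under $8$: one must verify that $d=\sqrt{\pi/2}\,\max_{x\ge0}x\sqrt{1+x^2/2}\,e^{-x^2/2}$ together with $b=1$ and the additive constant $C_0$ coming from Step~2 (itself depending on the $5.14$ in Theorem~\ref{th:exp-hardy}) sums to at most $8$. This is a finite computation but it is the step where the argument could fail if the constant in Theorem~\ref{th:exp-hardy} were any larger, and it is worth checking the maximization defining $d$ with some care.
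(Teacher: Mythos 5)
Your overall strategy matches the paper's: Theorem~\ref{th:exp-hardy} feeds Lemma~\ref{lem:paris}, which yields a bound for $\gamma_n$, after which the additive constant is absorbed using $G\geq 1$ and finally the two weights are compared pointwise. However, Step~1 contains a concrete error: Theorem~\ref{th:exp-hardy} is stated with the \emph{unnormalized} weight $e^{-x^2/2}\,dx$, not $d\gamma_1$. Converting to $d\gamma_1$ introduces a factor $\sqrt{2\pi}$ on both sides, and the correct identification of the hypothesis of Lemma~\ref{lem:paris} is $F(x)=ae^{bx}-c$ with $a=e^{5.14}/\sqrt{2\pi}$, $b=\sqrt{2\pi}$, $c=\tfrac12$, not $a=e^{5.14}$, $b=1$, $c=e^{5.14}$ as you wrote (which is also internally inconsistent, since $F(x)=e^{5.14}e^{x}$ has $c=0$). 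Consequently $a-2c=a-1>0$ here (roughly $67$), so the $(a-2c)\exp\{d\int G\}$ term in \eqref{eq:13} is \emph{not} negative and cannot be discarded.

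With the correct constants, the output of Lemma~\ref{lem:paris} is $\int e^g\,d\gamma_n\le 1+a\exp\{(d+b)I\}+(a-1)\exp\{dI\}$ where $I=\int G(|\nabla g|)\,d\gamma_n\geq 1$, $d\approx 0.96$, $d+b\approx 3.47$. Your Step~2 (the estimate $\log(1+ae^{u})\le\log(2a)+u$ followed by absorbing the constant via $I\geq 1$) gives a coefficient of roughly $\log(2a)+3.5\approx 8.4$, which overshoots $8$. The paper achieves $8$ by a sharper device: setting $x=e^{I}\geq e$ it shows that $\Psi(x):=\log\bigl(1+ax^{3.5}+(a-1)x\bigr)/\log x$ is decreasing on $[e,\infty)$, so the coefficient is $\Psi(e)\approx 7.8\le 8$. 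Your simpler absorption argument is in the right spirit but loses just enough that the advertised constant $8$ is out of reach; you would need to replace it by the monotonicity argument (or accept a slightly larger constant). Step~3 (the quadratic inequality $64(1+t)^2\le 196(1+t^2/2)$, negative discriminant) is correct and equivalent to the paper's use of $1/\sqrt{1+t^2/2}\le\sqrt{3}/(1+t)$ with $8\sqrt{3}\le 14$.
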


\begin{proof}[Proof of Corollary \ref{cor:IR}]
The exponential Hardy-type inequality of Theorem \ref{th:exp-hardy}
implies that 
$$
\int_0^\infty e^f d\gamma - \frac{1}{2}
\leq 
F \left( \int G(|f'|) d\gamma \right) , \qquad f(0)=0
$$
with  $F(x)=ae^{bx} - c$, $a=\frac{e^{5.14}}{\sqrt{2\pi}}$, $b= \sqrt{2\pi}$, $c=\frac{1}{2}$
and $G(x)=e^{x^2/2}/\sqrt{1+(x^2/2)}$. Therefore the assumptions of Lemma \ref{lem:paris} are satisfied. Since 
$\max_{x \geq 0} \frac{x}{G(x)}=
\sqrt{1+\sqrt{2}}e^{-\frac{1}{\sqrt{2}}}$
(the maximum is reached at $x = 2^{1/4}$), 
$$
d
\coloneqq
\sqrt{\frac{\pi}{2}}
\max_{x \geq 0} \frac{x}{G(x)}
\simeq 0.9602
\leq 1
$$
and $d+b \simeq 3.467 \leq 3.5$.
Lemma \ref{lem:paris} implies that, for any dimension $n$ and any $f$ with $\int fd\gamma_n = 0$, it holds
\begin{align*}
\log \int e^g d\gamma_n 
& \leq 
\log \left(1 + a \exp \left\{3.5\int G(|\nabla f|) d\gamma_n \right\}
+(a-1) \exp \left\{d\int G(|\nabla f|) d\gamma_n \right\} \right)
\end{align*}
It is not difficult to check that
$x \mapsto \Psi(x):= \frac{1}{\log(x)} \log \left(1 + a x^{3.5} + (a-1)x \right)$ is decreasing on $[e,\infty)$ so that, for $x=\exp \left\{\int G(|\nabla f|) d\gamma_n \right\}$,
\begin{align*}
\log \int e^f d\gamma_n 
& \leq 
\Psi(e)  \int_{\mathbb{R}^n} \frac{e^{\frac{|\nabla f|^2}{2}}}{\sqrt{1+ \frac{|\nabla f|^2}{2}}} d\gamma_n 
\leq 8
 \int_{\mathbb{R}^n} \frac{e^{\frac{|\nabla f|^2}{2}}}{\sqrt{1+ \frac{|\nabla f|^2}{2}}} d\gamma_n 
\end{align*}
where we used that $\Psi(e)   \leq 8$ (by numerical computation).
To conclude, it is enough to observe that $\frac{1}{\sqrt{1+\frac{x^2}{2}}} \leq \frac{\sqrt{3}}{1+x}$ (the maximum is reached at $x=2$), and that $8\sqrt{3} \leq 14$.
\end{proof}

\begin{proof}[Proof of Theorem \ref{th:exp-hardy}]
Since $f(0)=0$, for $x >0$, we have by Young's inequality, 
%using Lemma \ref{lem:young} below and the notations of the lemma,
\begin{align*}
|f(x)| 
& = 
| \int_0^x f'(t) e^{t^2/2} e^{-t^2/2} dt |    \\
& \leq 
 \int_0^x G(|f'(t)|) e^{-t^2/2} dt + \int_0^x G^*(e^{t^2/2}) e^{-t^2/2} dt 
% & \leq 
% \int_0^\infty e^{\frac{f'(x)^2}{2}} e^{-\frac{x^2}{2}} dx
% + \int_0^x \theta \left( W^{-1}(t^2/2)\right) dt .
\end{align*}
for any $G \colon [0,\infty) \to \mathbb{R}$ and $G^*(y)=\sup_{x>0} \{xy - G(x)\}$, $y \geq 1$.
It follows that
$$
\log \int_0^\infty e^{|f(x)|} e^{-\frac{x^2}{2}} dx \leq  
\int_0^\infty G(|f'(x)|) e^{-\frac{x^2}{2}} dx
+
\log \int_0^\infty e^{\int_0^x \left[ G^*(e^{t^2/2}) e^{-t^2/2} - t \right]dt}  dx .
$$
To end the proof, it remains to show that the second term in the right hand side of the latter is bounded above. Having a non-explicit constant would just necessitate the asymptotic of $G^*$ for 
 the choice $G(x)=e^{V(x)}$ with $V(x) \coloneqq \frac{x^2}{2}-\frac{1}{2}\log (1+\frac{x^2}{2})$,  $x \geq 0$. This could be achieved in few lines. We choose however to make some effort to keep track of the constants, making the presentation much lengthier. In other words the proof of Theorem \ref{th:exp-hardy} is essentially finished, what remains to be done is fixing some heavy technicalities to get an explicit constant in the end.

Observe that $V'(x)=\frac{x(1+x^2)}{2+x^2}$ and set 
$$
W(x)=V(x)+ \log V'(x) = \frac{x^2}{2} + \log H(x),
\qquad H(x) = \frac{\sqrt{2}x(1+x^2)}{(2+x^2)^{3/2}} , \quad x >0 .
$$
The maps $W, H$ are increasing from $(0,\infty)$ to $(-\infty,\infty)$ and from 
$(0,\infty)$ to $(0,\sqrt{2})$ respectively, so that we can safely define their inverse $W^{-1},H^{-1}$ (see Lemma \ref{lem:technical}). Set $A_o=H^{-1}(1)$ and $x_o=W^{-1}(0)$. We observe that
$$
G^*(y) = y \left( W^{-1}(\log y) - \frac{1}{V'(W^{-1}(\log y))}\right), \qquad y \geq 1
$$
and (for $t \geq 0$)
\begin{align*}
G^*(e^{t^2/2}) e^{-t^2/2} - t 
& = 
W^{-1}(t^2/2) - \frac{1}{V'(W^{-1}(t^2/2))} - t \\
& =
x - \frac{1}{V'(x)} - \sqrt{2W(x)}
\end{align*}
where we have set $x=W^{-1}(t^2/2)$. Denote $t_o = \sqrt{2W(4)} \simeq 4.057$ and observe that, by monotonicity of $W$, there is a correspondence between the notations $[0, t_o]$ and $[x_o, 4]$ through the change of variables $x=W^{-1}(t^2/2)$). Therefore, thanks to Item $(iii)$ of Lemma \ref{lem:technical} that guarantees that the above quantity is non-positive,  the following holds
\begin{align*}
\int_0^\infty e^{\int_0^x \left[ G^*(e^{t^2/2}) e^{-t^2/2} - t \right]dt}dx 
& = 
\int_0^{t_o} e^{\int_0^x \left[ G^*(e^{t^2/2}) e^{-t^2/2} - t \right]dt} dx 
+ 
\int_{t_o}^\infty e^{\int_0^x \left[ G^*(e^{t^2/2}) e^{-t^2/2} - t \right]dt}dx \\
& \leq 
\int_0^{t_o} dx 
+ 
\int_{t_o}^\infty e^{ \int_{t_o}^x  \left[ G^*(e^{t^2/2}) e^{-t^2/2} - t \right]dt }dx .
 \\
 & =
 t_o + 
 \int_{t_o}^\infty e^{ \int_{t_o}^x  \left[ G^*(e^{t^2/2}) e^{-t^2/2} - t \right]dt} dx .
\end{align*}
Observe that, for any $x \geq A_o$ $W(x) \geq x^2/2$ so that (composing by $W^{-1}$ that is increasing) for all $t \geq A_o$ (and therefore for all $t \geq t_o$), $W^{-1}(t^2/2) \leq t$. It follows from  Item $(iv)$ of Lemma \ref{lem:technical} that
\begin{align*}
    \int_{t_o}^x  \left[ G^*(e^{t^2/2}) e^{-t^2/2} - t \right]dt
    & \leq  
    - \int_{t_o}^x \frac{1.228}{W^{-1}(t^2/2)} dt \\
    & \leq 
    - \int_{t_o}^x \frac{1.228}{t} dt \, = \,
    - 1.228 \log \left( \frac{x}{t_o}\right) .
\end{align*}
%where $\varepsilon(A_o)=  \frac{A_o\log 2}{A_o + \sqrt{A_o^2+\log 2}} >0$. 
In turn, 
\begin{align*}
    \int_{t_o}^\infty e^{ \int_{t_o}^x  \left[ G^*(e^{t^2/2}) e^{-t^2/2} - t \right]dt }dx
    & \leq 
    \int_{t_o}^\infty \left( \frac{x}{t_o}\right)^{-1.228}  dx \, = \,
    \frac{1}{0.228 t_o} .
\end{align*}
We conclude that
\begin{align*}
\int_0^\infty e^{\int_0^x \left[ G^*(e^{t^2/2}) e^{-t^2/2} - t \right]dt}dx 
\leq t_o +  \frac{1}{0.228 t_o } \simeq 5.138 \leq 5.14 
\end{align*}
as expected.
\end{proof}

The next lemma collects some technical facts about the function $V$ and $W$ appearing in the proof of the previous theorem.

\begin{lem} \label{lem:technical}
Set 
$$
V(x)=\frac{x^2}{2}-\frac{1}{2}\log (1+\frac{x^2}{2}), \quad
H(x)=\frac{\sqrt{2}x(1+x^2)}{(2+x^2)^{3/2}}, \quad x \geq 0,
$$
(observe that $V'(x)=\frac{x(1+x^2)}{2+x^2}$) and 
$$
W(x)=V(x)+ \log V'(x) = \frac{x^2}{2} + \log H(x) .
$$
Then\\
$(i)$ 
the map $H$ is increasing (one to one from $(0,\infty)$ to $(0,\sqrt{2})$)
and $A_o \coloneqq H^{-1}(1) \in (2.13,2.14)$;\\
$(ii)$ 
the map  $W$ is increasing (one to one from $(0,\infty)$  to $(-\infty,\infty)$) and $x_o \coloneqq W^{-1}(0) \in (1.05,1.06)$;\\
$(iii)$ 
for all $x \geq x_o$,
$$
x - \frac{1}{V'(x)} - \sqrt{2W(x)} \leq 0 ;
$$
%Furthermore, if $x \geq A_o$,  $x - \frac{1}{V'(x)} - \sqrt{2W(x)} \leq - \frac{1}{x}$.\\
$(iv)$ for all $x \geq 4$, 
$$
x - \frac{1}{V'(x)} - \sqrt{2W(x)} \leq - \frac{1.228}{x}
$$
%with
%$$
%\varepsilon (A) \coloneqq \frac{A\log 2}{A + \sqrt{A^2+\log %2}} >0 .
%$$
\end{lem}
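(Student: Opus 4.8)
The plan is to treat (i)--(iv) in turn, reducing each to the sign of an explicit low-degree polynomial after the substitution $u=x^2$. For (i) I would compute the logarithmic derivative
$$
\frac{H'(x)}{H(x)}=\frac1x+\frac{2x}{1+x^2}-\frac{3x}{2+x^2};
$$
brought to the common denominator $x(1+x^2)(2+x^2)$ its numerator collapses to $2+4x^2>0$, so $H$ is strictly increasing, and $H(0^+)=0$, $H(\infty)=\sqrt2$ give the bijection onto $(0,\sqrt2)$. Squaring $H(x)=1$ and setting $u=x^2$ turns the equation into $u^3-2u^2-10u-8=0$, with $H(x)$ below (resp.\ above) $1$ according to the sign of this cubic; evaluating it at $u=2.13^2$ and $u=2.14^2$ (negative, then positive) pins $A_o\in(2.13,2.14)$. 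For (ii), $W'(x)=x+H'(x)/H(x)>0$ by the same computation, and $W(0^+)=-\infty$, $W(\infty)=+\infty$; the bracketing $x_o\in(1.05,1.06)$ is the direct numerical check $W(1.05)<0<W(1.06)$.

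For (iii), observe that $xV'(x)=x^2(1+x^2)/(2+x^2)>1\iff x^4>2$, so on $[x_o,2^{1/4}]$ one has $x-1/V'(x)\le0$ (using $2^{1/4}>1.06>x_o$), while $x\ge x_o$ forces $W(x)\ge0$; hence the asserted inequality is trivial there. For $x>2^{1/4}$ the claim is $\sqrt{2W(x)}\ge x-1/V'(x)>0$, equivalent after squaring to $\Psi(x):=2W(x)-(x-1/V'(x))^2\ge0$; using $2W(x)=x^2+2\log H(x)$ and $\tfrac{2(2+x^2)}{1+x^2}-\tfrac{(2+x^2)^2}{x^2(1+x^2)^2}=\tfrac{(2+x^2)(2x^4+x^2-2)}{x^2(1+x^2)^2}$, one gets, with $u=x^2$,
$$
\Psi=\log\!\left(\frac{2u(1+u)^2}{(2+u)^3}\right)+\frac{2u^3+5u^2-4}{u^3+2u^2+u}.
$$
For $u\ge A_o^2$ the logarithm is $\ge0$ (this is $H(x)\ge1$) and the rational term is $\ge0$ (numerator positive for $u\ge1$), so $\Psi\ge0$. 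For $\sqrt2<u<A_o^2$ the logarithm is negative, and I would instead show $\Psi$ is increasing there: the logarithmic part increases since $r(u):=u(1+u)^2/(2+u)^3$ has logarithmic derivative $(2+4u)/(u(1+u)(2+u))>0$; the rational part $g(u)=(2u^3+5u^2-4)/(u^3+2u^2+u)$ has the numerator of $g'$ equal to $-(u^4-4u^3-17u^2-16u-4)$, which is $\ge0$ for $u<4.58$ (for $u\le4$ because $u^3(u-4)\le0$ dominates the other terms; for $4<u<4.58$ because $u^3(u-4)<58<340\le17u^2+16u+4$). Hence $\Psi(u)\ge\Psi(\sqrt2)$, and this value equals $2W(2^{1/4})>0$ because $2^{1/4}>x_o$.

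For (iv), when $x\ge4$ both sides of $\sqrt{2W(x)}\ge x-1/V'(x)+1.228/x$ are positive; squaring and writing $c=1.228$, $u=x^2$ (and using $x/V'(x)=(2+u)/(1+u)$, $1/(xV'(x))=(2+u)/(u(1+u))$, $1/V'(x)^2=(2+u)^2/(u(1+u)^2)$), the inequality becomes $\log(2r(u))+R(u)\ge0$ with $R(u)=M(u)/(u(1+u)^2)$ and $M(u)=(2-2c)u^3+(5-2c-c^2)u^2+(4c-2c^2)u+(4c-c^2-4)$. One checks that $R(u)-(2-2c)=\frac{(1+2c-c^2)u^2+(6c-2c^2-2)u+(4c-c^2-4)}{u(1+u)^2}$ has a positive numerator for $u\ge1$, so $R(u)>2-2c$; since $r$ is increasing, $\log(2r(u))\ge\log(2r(16))=\log(1156/729)$, and it remains to verify $\log(1156/729)>2c-2=0.456$, i.e.\ $1156/729>e^{0.456}$, which holds as $1156/729>1.585>1.578>e^{0.456}$.

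The genuine content is modest --- everything reduces to the sign of explicit polynomials once $u=x^2$ is substituted --- but the bookkeeping is delicate in two places. First, (iii) really couples with (i) and (ii): one must split the range at $u=A_o^2$ (where the logarithmic term changes sign), prove the monotonicity of $g$ on the left piece through the quartic sign analysis, and recognise the left-endpoint value as $2W(2^{1/4})$, which is positive only because $2^{1/4}$ lies to the right of $x_o$. Second, the constant $1.228$ in (iv) is calibrated right at the edge of what the argument permits --- it works precisely because $1.228<1+\tfrac12\log(1156/729)$ --- so the closing numerical comparison has to be carried out with a little care, a coarser bound on $\log(2r(16))$ being insufficient. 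I expect this second point, together with keeping all the thresholds ($A_o$, $x_o$, $2^{1/4}$, $A_o^2<4.58$) consistently placed, to be the main obstacle; it is a matter of care rather than depth.
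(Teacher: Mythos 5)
Your proposal is correct, but it follows a genuinely different route from the paper's for parts $(iii)$ and $(iv)$. The paper never squares: it writes the algebraic identity
$$
x-\tfrac{1}{V'(x)}-\sqrt{2W(x)}
=-\tfrac{1}{V'(x)}+\tfrac{x^2-2W(x)}{x+\sqrt{2W(x)}}
=-\tfrac{1}{V'(x)}-\tfrac{2\log H(x)}{x+\sqrt{2W(x)}},
$$
so that $(iii)$ is immediate on $[A_o,\infty)$ (both terms nonpositive), trivial on $[x_o,2^{1/4}]$ (your observation $x\le 1/V'(x)$), and on $(2^{1/4},A_o)$ reduces to checking $\sqrt{e}\,H(2^{1/4})\ge 1$ after using $V'(x)\le x$ and $\sqrt{2W(x)}\ge 0$. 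For $(iv)$ the paper uses $V'(x)\le x$ to pass directly to $-\tfrac{1+I(x)}{x}$ with $I(x)=\tfrac{2\log H(x)}{1+\sqrt{1+2\log H(x)/x^2}}$, and bounds $I(x)\ge 0.228$ on $[4,\infty)$ by monotonicity in $\log H(x)$ and in $x$. Your version instead squares the two inequalities and reduces everything to sign checks of explicit rational functions of $u=x^2$: the monotonicity of $g(u)=(2u^3+5u^2-4)/(u^3+2u^2+u)$ for $u<4.58$, the endpoint identification $\Psi(\sqrt2)=2W(2^{1/4})>0$, the numerator of $R(u)-(2-2c)$, and the closing comparison $\log(1156/729)>0.456$. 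This is heavier bookkeeping, but more mechanical and arguably more robust; the paper's rationalization is slicker and in $(iv)$ avoids the squared cross-terms entirely. Both arguments depend on the same hierarchy of thresholds $x_o<2^{1/4}<A_o<\sqrt{4.58}<4$; your explicit polynomial manipulations are all verified to check out, including the delicate $A_o^2<2.14^2=4.5796<4.58$ and $1156/729\approx1.5857>e^{0.456}\approx1.5777$.
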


\begin{proof}
Point $(i)$ and $(ii)$ are obvious and left to the reader. By numerical computations $A_o \in (2.13, 2.14)$ and $x_o \in (1.05,1.06)$.
For $(iii)$ we observe that, for all $x \geq x_o$,
\begin{align}
x - \frac{1}{V'(x)} - \sqrt{2W(x)} 
& =
- \frac{1}{V'(x)} + \frac{x^2 - 2W(x)}{x + \sqrt{2W(x)}} \nonumber \\
& =
- \frac{1}{V'(x)} - \frac{2 \log H(x)}{x + \sqrt{2W(x)}} . \label{eq:iv}
\end{align}
For $x \geq A_o$, $H(x) \geq 1$ and therefore the above quantity is negative. 
Now assume that $x \in (x_o,A_o)$. We first deal with $x \in (x_o,2^{1/4})$. In that regime we write
\begin{align*}
    x - \frac{1}{V'(x)} - \sqrt{2W(x)}
    & \leq 
    x  - \frac{1}{V'(x)} 
    =
    \frac{x^4-2}{x(1+x^2)} \leq 0 .
\end{align*}
For $x \in (2^{1/4},A_o)$, since $H(x) \leq 1$, $V'(x) \leq x$ and $\sqrt{2W(x)} \geq 0$, the following holds
\begin{align*}
x - \frac{1}{V'(x)} - \sqrt{2W(x)} 
& =
- \frac{1}{V'(x)} - \frac{2 \log H(x)}{x + \sqrt{2W(x)}} \\
& \leq 
- \frac{1}{x} - \frac{2 \log H(x)}{x} \\
& = 
- \frac{2}{x} \log (\sqrt{e}H(x)) .
%- \frac{1}{x} \log \left( \frac{\sqrt{2}ex(1+x^2)}{(2+x^2)^{3/2}} \right) .
\end{align*}
By monotonicity of $H$  on $(2^{1/4},A_o)$ we have  
$- \log \left( \sqrt{e} H(x)  \right) \leq - \log (\sqrt{e} H(2^{1/4})) \leq 0$
since $\sqrt{e} H(2^{1/4}) = \sqrt{2e}2^{1/4}(1+\sqrt{2})/(2+\sqrt{2})^{3/2} \geq 1$. This proves Point $(iii)$. 

To prove the statement of Point $(iv)$, we come back to \eqref{eq:iv} and use the fact that $V'(x) \leq x$ to write
\begin{align*}
x - \frac{1}{V'(x)} - \sqrt{2W(x)} 
& \leq
- \frac{1}{x} - \frac{2 \log H(x)}{x + \sqrt{2W(x)}} \\
& =
- \frac{1+I(x)}{x}
\end{align*}
with 
$$
I(x) \coloneqq  
\frac{2x \log H(x)}{x + \sqrt{2W(x)}} 
= 
\frac{2\log H(x)}{1 + \sqrt{1 + \frac{2 \log H(x)}{x^2}}} . 
$$
Since
Since $y \mapsto \frac{y}{1+\sqrt{1+y}}$ and $H$ are increasing, we get, for any $x \geq 4$
(by numerical computations)
$$
I(x) 
\geq 
\frac{2 \log H(4)}{1 + \sqrt{1 + \frac{2\log H(4)}{x^2}}} 
\geq 
\frac{2 \log H(4)}{1 + \sqrt{1 + \frac{2\log H(4)}{4^2}}} 
\geq 0.228 .
$$
The expected result of Point $(iv)$ follows. 
\end{proof}

\subsection{Extension}

In the next theorem, we extend the previous approach to more general exponential inequalities. 
To keep the length of the paper reasonable we will not, in this section, keep track of the explicit constants.

Our starting point is the following (exponential Hardy-type inequality).

\begin{thm}
For all $\beta \in (\sqrt{5}-1,2)$, there exists a constant $c_\beta \in (0, \infty)$
such that for all $f \colon [0,\infty) \to [0,\infty)$ with $f(0)=0$ the following inequality holds
$$
\log \int_0^\infty e^{|f(x)|^\frac{2\beta}{\beta+2}} e^{-\frac{x^2}{2}} dx 
\leq  
\left(\int_0^\infty e^{ |\kappa f'(x)|^\beta} e^{-\frac{x^2}{2}} dx \right)^\frac{2\beta}{\beta+2}
+
c_\beta 
$$
with $\k= \frac{\sqrt{2}\beta}{\beta+2}$. One can take
$$
c_\beta = \frac{5}{(2-\beta)^2} - \log(\beta-\sqrt{5}+1).
$$
\end{thm}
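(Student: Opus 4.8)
The plan is to mirror the proof of Theorem~\ref{th:exp-hardy}, the one genuinely new point being that the exponent $p:=\frac{2\beta}{\beta+2}$ now lies in $(0,1)$, so the subadditivity of $t\mapsto t^p$ will play the role of the linear bookkeeping used there. We may assume $J:=\int_0^\infty e^{|\kappa f'(x)|^\beta}\,e^{-x^2/2}\,dx<\infty$, otherwise there is nothing to prove. Since $f(0)=0$ and $f\ge0$, I would write $f(x)=\int_0^x f'(t)\,e^{t^2/2}\,e^{-t^2/2}\,dt$ and apply, pointwise in $t$, the elementary inequality $uv\le G(u)+G^*(v)$ with $G(u):=e^{|\kappa u|^\beta}$ (which, incidentally, is convex on $[0,\infty)$ for $\beta\in(1,2)$) and $G^*(y):=\sup_{u>0}\{uy-G(u)\}$; multiplying by $e^{-t^2/2}$ and integrating gives
\[
f(x)\ \le\ \int_0^x G(|f'(t)|)\,e^{-t^2/2}\,dt+R(x)\ \le\ J+R(x),\qquad R(x):=\int_0^x G^*\!\big(e^{t^2/2}\big)\,e^{-t^2/2}\,dt ,
\]
the first integral being $\le J$ because $G(|f'(t)|)=e^{|\kappa f'(t)|^\beta}\ge0$.

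Since $0<p\le1$, the map $t\mapsto t^p$ is subadditive, so $f(x)^p\le J^p+R(x)^p$, whence $\int_0^\infty e^{f(x)^p}e^{-x^2/2}\,dx\le e^{J^p}\int_0^\infty e^{R(x)^p-x^2/2}\,dx$. Taking logarithms, the theorem reduces to the estimate $\int_0^\infty e^{R(x)^p-x^2/2}\,dx\le e^{c_\beta}$; near $x=0$ the exponent $R(x)^p-x^2/2$ tends to $0$, so only the growth of $R$ at infinity matters, exactly as in Theorem~\ref{th:exp-hardy}.

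The heart of the argument is the asymptotics of $R$. Inverting the stationarity relation defining $G^*$ — with $w=|\kappa u|^\beta$, the maximiser for $y=e^{t^2/2}$ solves the Lambert-type equation $w+(1-\tfrac1\beta)\log w+\log(\kappa\beta)=t^2/2$, so $w=t^2/2-(1-\tfrac1\beta)\log(t^2/2)-\log(\kappa\beta)+o(1)$ — one obtains, for $t$ large,
\[
G^*\!\big(e^{t^2/2}\big)\,e^{-t^2/2}=\tfrac1\kappa(t^2/2)^{1/\beta}-\tfrac{1-1/\beta}{\kappa\beta}(t^2/2)^{1/\beta-1}\log(t^2/2)+O\!\big((t^2/2)^{1/\beta-1}\big).
\]
Integrating, the leading term of $R(x)$ is $c_0\,x^{(2+\beta)/\beta}$ with $c_0=\big(\kappa\,2^{1/\beta}\tfrac{2+\beta}{\beta}\big)^{-1}$, and the choice $\kappa=\tfrac{\sqrt2\,\beta}{\beta+2}$ is exactly what makes $c_0^{\,p}=\tfrac12$ (using $p\,(2+\beta)/\beta=2$): the leading part of $R(x)^p$ is precisely $x^2/2$ and cancels the Gaussian term. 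Using $\int_0^x t^{2/\beta-2}\log t\,dt=\tfrac{x^{2/\beta-1}\log x}{2/\beta-1}-\tfrac{x^{2/\beta-1}}{(2/\beta-1)^2}$ (valid since $2/\beta-2\in(-1,0)$ for $\beta\in(1,2)$) and expanding $R(x)^p=\tfrac12 x^2(1+\cdots)^p$, the dominant remaining term is logarithmic:
\[
R(x)^p-\frac{x^2}{2}\ \le\ -\theta_\beta\log x+O\!\big((2-\beta)^{-2}\big),\qquad \theta_\beta:=\frac{\tfrac4\beta\big(1-\tfrac1\beta\big)}{\tfrac2\beta-1}.
\]
A short computation gives, with $\alpha=1/\beta$, that $\theta_\beta>1\iff 4\alpha^2-2\alpha-1<0\iff\alpha<\tfrac{1+\sqrt5}{4}\iff\beta>\sqrt5-1$, while $\tfrac2\beta-1>0$ forces $\beta<2$. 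For $\beta$ in this range $\int_0^\infty e^{R(x)^p-x^2/2}\,dx$ converges; the tail $\int^\infty x^{-\theta_\beta}\,dx$ contributes a term blowing up like $-\log(\beta-\sqrt5+1)$ as $\beta\downarrow\sqrt5-1$, and the $O((2-\beta)^{-2})$ bound (from the second summand of $\int t^{2/\beta-2}\log t\,dt$, with $(2/\beta-1)^{-2}\sim(2-\beta)^{-2}$) produces the $\tfrac{5}{(2-\beta)^2}$ term, yielding the stated $c_\beta$.

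The only serious difficulty is making this last step fully rigorous and explicit: controlling the Lambert-type inversion of $G^*$ uniformly, integrating the expansion to bound $R$ from above on all of $[0,\infty)$ (not merely its tail, which will require a split $[0,x_*]\cup[x_*,\infty)$ as in Theorem~\ref{th:exp-hardy}), and tracking constants through $R(x)^p-x^2/2$. This is the $\beta$-parametrised analogue of Lemma~\ref{lem:technical}, and one should expect a similarly lengthy but elementary technical lemma. As there, one could instead run the argument with $G(u)=e^{|\kappa u|^\beta}(1+|\kappa u|^\beta)^{-1/\beta}$ or a similar logarithmically corrected weight, which makes the inversion of $G^*$ cleaner, and only at the very end bound $G(u)\le e^{|\kappa u|^\beta}$; this changes neither the structure nor the threshold $\beta>\sqrt5-1$.
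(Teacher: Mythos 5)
Your sketch reproduces the paper's proof essentially step for step: the same Young/Fenchel splitting with $G(u)=e^{(\kappa u)^\beta}$, the same use of subadditivity of $t\mapsto t^{2\beta/(\beta+2)}$, the same identification that $\kappa=\sqrt{2}\beta/(\beta+2)$ makes the leading part of $R(x)^p$ exactly $x^2/2$, and the same logarithmic exponent $\theta_\beta=\frac{4(\beta-1)}{\beta(2-\beta)}>1\iff\beta>\sqrt5-1$. The technical lemma you flag as the $\beta$-parametrised analogue of Lemma~\ref{lem:technical} is precisely the paper's Lemma~\ref{lem:preparation}, which supplies the explicit bound $W^{-1}(x)\le\frac{1}{\kappa}\bigl(x-\frac{\beta-1}{\beta}\log x+1\bigr)^{1/\beta}$ together with the sign condition near the origin, and feeding it through the computation you outline yields the stated constant $c_\beta$.
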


\begin{remark}
{\rm
The constant $c_\beta = \frac{5}{(2-\beta)^2} - \log(\beta-\sqrt{5}+1)$  
explodes both at $\beta=\sqrt{5}-1$ and $\beta=2$. Indeed, for some technical 
reason the above result does not contain the case $\beta=2$ that was 
independently considered in Theorem \ref{th:exp-hardy}. It appears that many 
simplification occur when $\beta=2$ that cannot be transposed to $\beta \neq 2$.  

  Let us mention that we were not able to extend the semi-group approach developed in \cite{IR} or in Section \ref{sec:semigroup} to prove the Theorem above.

  Finally we observe that, with much more effort, it is possible to improve the right-hand side of the exponential Hardy-type inequality appearing in the theorem, by adding an extra factor in the denominator. Namely, one can replace
$e^{|\kappa f'|^\beta}$ by $e^{|\kappa f'|^\beta} / H(f')$ for some increasing function $H$ tending to infinity at infinity. }
\end{remark}

\begin{proof}
The proof follows the line of Theorem \ref{th:exp-hardy}. By Young's inequality, 
%using Lemma \ref{lem:young} below and the notations of the lemma,
\begin{align*}
|f(x)| 
& = 
| \int_0^x f'(t) e^{t^2/2} e^{-t^2/2} dt |    \\
& \leq 
 \int_0^x G(|f'(t)|) e^{-t^2/2} dt + \int_0^x G^*(e^{t^2/2}) e^{-t^2/2} dt 
% & \leq 
% \int_0^\infty e^{\frac{f'(x)^2}{2}} e^{-\frac{x^2}{2}} dx
% + \int_0^x \theta \left( W^{-1}(t^2/2)\right) dt .
\end{align*}
for any $G \colon [0,\infty) \to \mathbb{R}$ and $G^*(y)=\sup_{x>0} \{xy - G(x)\}$, $y \geq 1$.
It follows that
\begin{align*}
& \int_0^\infty e^{|f(x)|^\frac{2\beta}{\beta+2}} e^{-\frac{x^2}{2}} dx  \leq  \\
& \int_0^x \exp \left\{ \left( 
\int_0^\infty G(|f'(t)|) e^{-\frac{t^2}{2}} dt
+
\int_0^x \left[ G^*(e^{t^2/2}) e^{-t^2/2}  \right]dt  \right)^\frac{2\beta}{\beta+2} - \frac{x^2}{2}  \right\} dx.
\end{align*}
Our aim is to analyse the second term in the right hand side of the latter. Assume that $G(x)=e^{V(x)}$ with $V(x)= (\kappa x)^\beta$,  $x \geq 0$ and $\kappa:= \frac{\sqrt{2}\beta}{\beta+2}$.
Set
$$
W(x):=V(x)+ \log V'(x) = (\kappa x)^\beta + \log (\beta \kappa^\beta x^{\beta-1}), 
\quad x >0 .
$$
The map $W$ is one to one increasing from $(0,\infty)$ to $(-\infty,\infty)$ 
so that we can define its inverse $W^{-1}$. In particular,
a direct computation shows that 
$$
G^*(y) = y \left( W^{-1}(\log y) - \frac{1}{V'(W^{-1}(\log y))}\right), \quad y \geq 1,
$$
so that (for $t \geq 0$)
$$
G^*(e^{t^2/2}) e^{-t^2/2} 
= 
W^{-1}(t^2/2) - \frac{1}{V'(W^{-1}(t^2/2))}.
$$
Set 
$$
M := \int_0^\infty G(|f'(x)|) e^{-\frac{x^2}{2}} dx = 
\int _0^\infty e^{|\kappa f'(x)|^\beta} e^{-\frac{x^2}{2}} dx 
\quad 
\mbox{and} 
\quad  
x_o := \sqrt{2W\left(1/\left(\kappa \beta^\frac{1}{\beta} \right)\right)}.
$$
Then, thanks to Lemma \ref{lem:preparation} 
(that guarantees that $G^*(e^{t^2/2}) e^{-t^2/2} \leq 0$ for all $t \in (0,x_o)$) the following inequality holds
\begin{align*}
& \int_0^\infty \exp \left\{ \left( \int_0^x G(|f'(x)|) e^{-\frac{x^2}{2}} dx
+
\int_0^x 
\left[ G^*(e^{t^2/2}) e^{-t^2/2}  \right]dt  \right)^\frac{2\beta}{\beta+2} - \frac{x^2}{2}  \right\} dx \\
& \leq 
e^{M^\frac{2\beta}{\beta+2}} \int_0^{x_o} e^{ - \frac{x^2}{2}} dx  +
\int_{x_o}^\infty \exp \left\{ \left( M
+
\int_{x_o}^x \left[  G^*(e^{t^2/2}) e^{-t^2/2} \right]dt  \right)^\frac{2\beta}{\beta+2} - \frac{x^2}{2}  \right\} dx .
\end{align*}
Observe that by the last part of Lemma \ref{lem:preparation}, $x_o \leq 1$ so that
$\int_0^{x_o} e^{-x^2/2}dx \leq \int_0^1 e^{-x^2/2}dx \leq 1$. Combining these observations
and the fact that $G^*(e^{t^2/2}) e^{-t^2/2} \leq W^{-1}(t^2/2)$ (note that $W^{-1} \geq 0$), one has
\begin{align*}
& \int_0^\infty e^{|f(x)|^\frac{2\beta}{\beta+2}} e^{-\frac{x^2}{2}} dx  
\leq  
e^{M^\frac{2\beta}{\beta+2}} 
+ 
\int_{x_o}^\infty \exp \left\{ \left( M
+
\int_{x_o}^x  W^{-1}(t^2/2) dt  \right)^\frac{2\beta}{\beta+2} - \frac{x^2}{2}  \right\} dx .
\end{align*}
Since $0 \leq \frac{2\beta}{\beta+2} \leq 1$, 
\begin{align*}
\left( M
+
\int_{x_o}^x  W^{-1}(t^2/2) dt  \right)^\frac{2\beta}{\beta+2}    
& \leq 
M^\frac{2\beta}{\beta+2} 
+
\left( \int_{x_o}^x  W^{-1}(t^2/2) dt  \right)^\frac{2\beta}{\beta+2}  .
%\\
% & \leq 
% M + \left(  \int_{x_o}^x  W^{-1}(t^2/2) dt  \right)^\frac{2\beta}{\beta+2} 
\end{align*}
%where in the last line we used that $M \geq 1$ to guarantee that $M^\frac{2\beta}{\beta+2} \leq M$.
Therefore, as an intermediate result, it holds that
$$
\int_0^\infty e^{|f(x)|^\frac{2\beta}{\beta+2}} e^{-\frac{x^2}{2}} dx  
\leq  
e^{M^\frac{2\beta}{\beta+2}}
\left(1 + \int_{x_o}^\infty \exp \left\{ 
\left(  \int_{x_o}^x  W^{-1}(t^2/2) dt  \right)^\frac{2\beta}{\beta+2}  - \frac{x^2}{2}  \right\} dx  
\right) .
$$
Our next aim is to analyse the term
$\int_{x_o}^x  W^{-1}(t^2/2) dt$ for $x \geq x_o$ in order to prove that the integral factor is finite.
By Lemma \ref{lem:preparation} and a change of variables, we have
\begin{align*} 
\int_{x_o}^x  W^{-1}(t^2/2) dt
& \leq 
\frac{1}{\kappa}\int_{\sqrt{2/3}}^x \left( \frac{t^2}{2} - \frac{\beta-1}{\beta} \log \left( \frac{t^2}{2} \right) + 1 \right)^\frac{1}{\beta} dt \\
& =
\frac{1}{\kappa}\int_{\frac{1}{3}}^{\frac{x^2}{2}} \left( u - \frac{\beta-1}{\beta} \log u + 1 \right)^\frac{1}{\beta} \frac{du}{\sqrt{2u}}. 
\end{align*}
Using the concavity property  
\begin{equation} \label{eq:concavity}
(1+y)^\gamma \leq 1 + \gamma y, \quad y > -1, \quad \gamma \in [0,1]
\end{equation}
 with $\gamma= \frac{1}{\beta}$, it follows that (for $u \geq 1/3$)
\begin{align*}
\left( u - \frac{\beta-1}{\beta} \log u + 1 \right)^\frac{1}{\beta} 
& =
u^\frac{1}{\beta} \left( 1 - \frac{\beta-1}{\beta} \frac{\log u}{u} + \frac{1}{u} \right)^\frac{1}{\beta} \\
&\leq 
u^\frac{1}{\beta} \left( 1 + \frac{1}{\beta} \left(- \frac{\beta-1}{\beta} \frac{\log u}{u} + \frac{1}{u} \right)\right)      \\
& =
u^\frac{1}{\beta} - \frac{\beta-1}{\beta^2} u^{-\frac{\beta-1}{\beta}} \log u + \frac{1}{\beta} u^{-\frac{\beta-1}{\beta}} .
\end{align*}
As a consequence, by direct computation and some algebra, the following holds
\begin{align*}
& \int_{\frac{1}{3}}^{\frac{x^2}{2}} \left[ u - \frac{\beta-1}{\beta} \log u + 1 \right]^\frac{1}{\beta} \frac{du}{\sqrt{2u}} \\
&  \leq 
\frac{1}{\sqrt{2}}
\int_{\frac{1}{3}}^{\frac{x^2}{2}} \left[
u^\frac{2-\beta}{2\beta} - \frac{\beta-1}{\beta^2} u^{-\frac{3\beta-2}{2\beta}} \log u + \frac{1}{\beta} u^{-\frac{3\beta-2}{2\beta}}
\right] du \\
& =
\frac{1}{\sqrt{2}}
\left[ \frac{2\beta}{\beta+2} u^\frac{2+\beta}{2\beta} 
- \frac{\beta-1}{\beta^2}\left(
\frac{2\beta}{2-\beta} u^\frac{2-\beta}{2\beta} \log u
- \frac{4\beta^2}{(2-\beta)^2} u^\frac{2-\beta}{2\beta} \right)
+
\frac{2}{2-\beta} u^\frac{2-\beta}{2\beta} 
\right]_{\frac{1}{3}}^{\frac{x^2}{2}} \\
& =
\kappa \left( \frac{x^2}{2}\right)^\frac{2+\beta}{2\beta}
- \frac{\sqrt{2}(\beta-1)}{\beta(2-\beta)}\left( \frac{x^2}{2}
\right)^\frac{2-\beta}{2\beta} \log \left( \frac{x^2}{2} \right)
+\frac{\sqrt{2}\beta}{(2-\beta)^2} \left( \frac{x^2}{2}
\right)^\frac{2-\beta}{2\beta} 
- c_\beta 
\end{align*}
where in the last line we used the fact that $\kappa=\frac{\sqrt{2}\beta}{\beta+2}$, and we set
$$
c_\beta \coloneqq 
\frac{\kappa}{3^\frac{2+\beta}{2\beta}}
+ \frac{\sqrt{2}(\beta-1)}{\beta(2-\beta)} \frac{\log 3}{3^\frac{2-\beta}{2\beta}} 
+\frac{\sqrt{2}\beta}{(2-\beta)^2 3^\frac{2-\beta}{2\beta}} >0 .
$$
Therefore, using \eqref{eq:concavity} with $\gamma = \frac{2\beta}{\beta+2} \leq 1$, we obtain
\begin{align*}
& \left(
\int_{x_o}^x  W^{-1}(t^2/2) dt  \right)^\frac{2\beta}{\beta+2} \\
& \leq 
\left(
\left( \frac{x^2}{2} \right)^\frac{2+\beta}{2\beta}
- \frac{\sqrt{2}(\beta-1)}{\kappa \beta(2-\beta)}\left( \frac{x^2}{2}
\right)^\frac{2-\beta}{2\beta} \log \left( \frac{x^2}{2} \right)
+\frac{\sqrt{2}\beta}{\kappa (2-\beta)^2} \left( \frac{x^2}{2}
\right)^\frac{2-\beta}{2\beta} 
%- \frac{c_\beta}{\kappa} 
\right)^\frac{2\beta}{\beta+2} \\
& =
\frac{x^2}{2}
\left(
1  %\left(+ M - \frac{c_\beta}{\kappa}\right)\left( \frac{x^2}{2}\right)^{-\frac{2+\beta}{2\beta}} 
- \frac{\sqrt{2}(\beta-1)}{\kappa \beta(2-\beta)}
\frac{\log \left( x^2/2 \right)}{x^2/2}
+\frac{\sqrt{2} \beta }{\kappa (2-\beta)^2} \frac{1}{x^2/2}
\right)^\frac{2\beta}{\beta+2} \\
& \leq 
\frac{x^2}{2}
\left(
1 + \frac{2\beta}{\beta+2} \left( %\left( M - \frac{c_\beta}{\kappa}\right)\left( \frac{x^2}{2}\right)^{-\frac{2+\beta}{2\beta}} 
- \frac{\sqrt{2}(\beta-1)}{\kappa \beta(2-\beta)}
\frac{\log \left( x^2/2 \right)}{x^2/2}
+\frac{\sqrt{2}\beta}{\kappa (2-\beta)^2} \frac{1}{x^2/2}
\right) \right) \\
& =
\frac{x^2}{2} 
%+ 
%\frac{2\beta}{\beta+2} \left( M - \frac{c_\beta}{\kappa}\right)\left( \frac{x^2}{2}\right)^{\frac{\beta-2}{2\beta}}
- \frac{2\sqrt{2}(\beta-1)}{\kappa (\beta+2)(2-\beta)}
\log \left( \frac{x^2}{2} \right)
+\frac{2\sqrt{2}\beta^2}{\kappa(\beta+2) (2-\beta)^2} .
\end{align*}
At this step, using the expression of $\kappa = \frac{\sqrt{2}\beta}{\beta+2}$ and $x_o \geq \sqrt{2/3}$ (from Lemma \ref{lem:preparation}), we can conclude that
\begin{align*}
& \int_{x_o}^\infty \exp \left\{ \left( 
\int_{x_o}^x  W^{-1}(t^2/2) dt  \right)^\frac{2\beta}{\beta+2} - \frac{x^2}{2}  \right\} dx \\
& \leq
e^\frac{2\beta}{(2-\beta)^2} \int_{\sqrt{2/3}}^\infty 
\left( \frac{2}{x^2}\right)^\frac{2(\beta-1)}{\beta(2-\beta)}
%\exp\left\{  \frac{2\beta}{\beta+2} \left( M - \frac{c_\beta}{\kappa}\right) \left( \frac{x^2}{2}\right)^{-\frac{2-\beta}{2\beta}}  \right\} 
dx \\
& =
e^{\frac{2\beta}{(2-\beta)^2}}2^{\frac{2(\beta-1)}{\beta(2-\beta)}} \int_{\sqrt{2/3}}^\infty 
\left( \frac{1}{x}\right)^\frac{4(\beta-1)}{\beta(2-\beta)}
%\exp\left\{  \frac{2\beta}{\beta+2} \left( M - \frac{c_\beta}{\kappa}\right) \left( \frac{1}{3}\right)^{-\frac{2-\beta}{2\beta}
%} \mathds{1}_{M - \frac{c_\beta}{\kappa} > 0} \right\} 
dx \\
& =
e^{\frac{2\beta}{(2-\beta)^2}}2^{\frac{2(\beta-1)}{\beta(2-\beta)}}
%\exp\left\{  \frac{2\beta}{\beta+2} 3^{\frac{2-\beta}{2\beta}} \left( M - \frac{c_\beta}{\kappa}\right)  \mathds{1}_{M - \frac{c_\beta}{\kappa} > 0} \right\}
\frac{\beta(2-\beta)}{\beta^2+2\beta-4}
\left( \frac{3}{2}\right)^\frac{\beta^2+2\beta-4}{2\beta(2-\beta)} \\
& =
\frac{\sqrt{2}\beta(2-\beta)}{\beta^2+2\beta-4}
\exp \left\{ \frac{2\beta}{(2-\beta)^2} +
\frac{\beta^2+2\beta-4}{2\beta(2-\beta)} \log 3 \right\} 
\end{align*}
where we used the fact that, for $\beta > \sqrt{5}-1$, $\frac{4(\beta-1)}{\beta(2-\beta)}>1$ to ensure the convergence of the integral.
Now  it is not difficult to see that
$$
\frac{\sqrt{2}\beta(2-\beta)}{\beta^2+2\beta-4} 
\leq 
\frac{8 \sqrt{2} - 16 \sqrt{2/5}}{4(\beta-\sqrt{5}+1)}
\leq 
\frac{1}{2(\beta-\sqrt{5}+1)}
\qquad \beta \in (\sqrt{5}-1 , 2)
$$
(the maximum is reached at $\beta=\sqrt{5}-1$), 
and 
$$
\frac{\beta^2+2\beta-4}{2\beta(2-\beta)} \log 3
\leq 
\frac{1}{(2-\beta)^2} .
$$
All together, we obtained for any $\beta \in (\sqrt{5}-1,2)$
\begin{align*}
 \int_0^\infty e^{|f(x)|^\frac{2\beta}{\beta+2}} e^{-\frac{x^2}{2}} dx  
 & \leq  
 e^{M^\frac{2\beta}{\beta+2}} \left(
 1 +
\frac{1}{2(\beta - \sqrt{5}+1)}
\exp \left\{ \frac{2\beta+1}{(2-\beta)^2} \right\} \right) \\
& \leq 
\frac{1}{\beta - \sqrt{5}+1}
\exp \left\{ \frac{5}{(2-\beta)^2} \right\}  e^{M^\frac{2\beta}{\beta+2}}
\end{align*}
This leads to the desired conclusion.
\end{proof}

\begin{lem} \label{lem:preparation}
Let $V(x) := (\kappa x)^\beta$,  $x \geq 0$, $\beta \in (1,2)$, with $\kappa:= \frac{\sqrt{2}\beta}{\beta+2}$ and 
$W(x):= (\kappa x)^\beta + \log (\beta \kappa^\beta x^{\beta-1})$, $x >0$. Then,  for all $x > 0$ the following inequality holds
\begin{equation*}
W^{-1}(x) \leq \frac{1}{\kappa}\left( x - \frac{\beta-1}{\beta}\log x +  1 \right)^\frac{1}{\beta} .
\end{equation*}
Furthermore, 
$$
x \leq W \left( \frac{1}{\kappa \beta^\frac{1}{\beta}} \right) 
\quad \Longleftrightarrow \quad 
W^{-1}(x) - \frac{1}{V'(W^{-1}(x))} \leq 0 .
$$
Finally, for all $ \beta \in(\sqrt{5}-1,2)$, it holds $\frac{1}{2} \geq W\left( \frac{1}{\kappa \beta^\frac{1}{\beta}} \right)  \geq 1/3$.
% In particular, for any $x \leq 1/2$,
% $$
% W^{-1}(x) \leq \frac{9}{2\sqrt{2}} - \frac{2\sqrt{2}}{9} \simeq 2.86 .
% $$
\end{lem}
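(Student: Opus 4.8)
The plan is to handle the three assertions separately, since they are essentially independent; each of them reduces, via the monotonicity of $W$, to elementary one-variable estimates built on the explicit formulas $V(z)=(\kappa z)^\beta$ and $V'(z)=\beta\kappa^\beta z^{\beta-1}=\beta\kappa\,(\kappa z)^{\beta-1}$. I would start with the middle equivalence, which is purely algebraic: putting $y:=W^{-1}(x)$, monotonicity of $W$ makes the left-hand condition read $y\le 1/(\kappa\beta^{1/\beta})$, while $y-1/V'(y)\le 0$ is, after multiplying through by $V'(y)>0$, exactly $\beta\kappa^\beta y^\beta\le 1$, i.e.\ $y\le(\beta\kappa^\beta)^{-1/\beta}=1/(\kappa\beta^{1/\beta})$. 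This computation also records, for use below, that at $x_*:=1/(\kappa\beta^{1/\beta})$ one has $(\kappa x_*)^\beta=1/\beta$ and $V'(x_*)=1/x_*=\kappa\beta^{1/\beta}$.

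For the first inequality I would set $\psi(x):=x-\tfrac{\beta-1}{\beta}\log x+1$ and $z(x):=\tfrac1\kappa\,\psi(x)^{1/\beta}$, so that $V(z(x))=\psi(x)$ by construction. Since $W$ is increasing, the claim $W^{-1}(x)\le z(x)$ is equivalent to $W(z(x))\ge x$; and since $W=V+\log V'$ and $V'(z(x))=\beta\kappa\,\psi(x)^{(\beta-1)/\beta}$, a short computation gives the identity $W(z(x))-x=1+\log(\beta\kappa)+\tfrac{\beta-1}{\beta}\log\tfrac{\psi(x)}{x}$, so everything reduces to showing this is $\ge 0$ for $x>0$, $\beta\in(1,2)$. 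For the first summand I would use that $\beta\mapsto\beta\kappa=\sqrt2\,\beta^2/(\beta+2)$ is increasing, hence $1+\log(\beta\kappa)\ge 1+\tfrac12\log2-\log3>0.24$; for the last summand I would bound $\psi(x)/x=1-\tfrac1x\bigl(\tfrac{\beta-1}{\beta}\log x-1\bigr)\ge 1-\tfrac{\beta-1}{\beta e}\ge 1-\tfrac1{2e}$ using $\sup_{x>0}\tfrac{\log x}{x}=\tfrac1e$, and then $\log(1-t)\ge-\tfrac{t}{1-t}$ to get $\tfrac{\beta-1}{\beta}\log\tfrac{\psi(x)}{x}\ge-\tfrac{(\beta-1)^2/(\beta^2e)}{1-(\beta-1)/(\beta e)}>-0.12$. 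Adding the two bounds closes the case.

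For the last assertion I would evaluate $W$ at $x_*$: from the by-product above, $W(x_*)=V(x_*)+\log V'(x_*)=\tfrac1\beta+\log(\kappa\beta^{1/\beta})=\tfrac1\beta+\log\kappa+\tfrac{\log\beta}{\beta}$, and substituting $\kappa=\sqrt2\,\beta/(\beta+2)$ yields $W(x_*)=\phi(\beta)$ with $\phi(\beta):=\tfrac1\beta+\tfrac12\log2+\bigl(1+\tfrac1\beta\bigr)\log\beta-\log(\beta+2)$. One checks directly that $\phi(2)=\tfrac12$ and $\phi'(\beta)=\tfrac{\beta-\log\beta}{\beta^2}-\tfrac1{\beta+2}$, which is positive on $(1,2)$ because $\log\beta\le\beta-1\le\tfrac{2\beta}{\beta+2}$ (the last inequality being $(\beta-2)(\beta+1)\le 0$). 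Hence $\phi$ is increasing on $[\sqrt5-1,2]$, so $W(x_*)=\phi(\beta)\le\phi(2)=\tfrac12$, while $W(x_*)\ge\phi(\sqrt5-1)=\tfrac{\sqrt5+1}{4}+\tfrac12\log2+\tfrac{\sqrt5+5}{4}\log(\sqrt5-1)-\log(\sqrt5+1)\approx 0.364$, which one checks numerically to exceed $\tfrac13$. I expect the only real work here to be bookkeeping: confirming that the crude constants line up (the $\approx0.24$ versus $\approx0.12$ in the first assertion, uniformly in $\beta$) and, a little more delicately, that $\phi(\sqrt5-1)>\tfrac13$ with its modest margin of about $0.03$; there is nothing conceptual to overcome.
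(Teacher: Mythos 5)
Your proof is correct and follows essentially the same route as the paper's: the identity $W(z(x))-x=1+\log(\beta\kappa)+\tfrac{\beta-1}{\beta}\log(\psi(x)/x)$ for the first claim, the algebraic reduction of the middle equivalence to $\beta\kappa^\beta y^\beta\le 1$, and the evaluation $W(x_*)=\tfrac{\log(e\beta)}{\beta}+\log\kappa$ followed by monotonicity in $\beta$ and numerical endpoint checks all match the paper's argument. The only differences are cosmetic — you bound the correction term via $\sup_{x>0}\log x/x=1/e$ where the paper minimizes $-\tfrac{\beta-1}{\beta}\tfrac{\log x}{x}+\tfrac1x$ exactly — and you additionally supply the derivative computation showing $\phi'>0$, a monotonicity the paper merely asserts.
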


\begin{proof}
By studying the map $x \mapsto x - \frac{\beta-1}{\beta} \log x + (2-\beta)^2$ on $(0,\infty)$ we observe that it is decreasing and then increasing, its  minimum is achieved at $x_o= \frac{\beta-1}{\beta}$ so that $x - \frac{\beta-1}{\beta} \log x + (2-\beta)^2 >0$ for all $x>0$. Therefore, we can compute 
\begin{align*}
& W \left(\frac{1}{\kappa}\left( x - \frac{\beta-1}{\beta}\log x + 1 \right)^\frac{1}{\beta} \right) \\
& =
x - \frac{\beta-1}{\beta}\log x + 1 + \log(\beta \kappa) 
 +
\frac{\beta-1}{\beta} \log \left( x - \frac{\beta-1}{\beta}\log x + 1 \right) \\
& =
x + \frac{\beta-1}{\beta} \log \left(1 - \frac{\beta-1}{\beta} \frac{\log x}{x} + \frac{1}{x}\right) +  \log(e\beta k) .
\end{align*}
Now the map $x \mapsto - \frac{\beta-1}{\beta} \frac{\log x}{x} + \frac{1}{x}$ is decreasing and then increasing, with a minimum achieved at $e^{\frac{2\beta-1}{\beta-1}}$ so that, for any $x >0$
\begin{align*}
W \left(\frac{1}{\kappa}\left( x - \frac{\beta-1}{\beta}\log x + 1 \right)^\frac{1}{\beta} \right) 
& \geq 
x + \frac{\beta-1}{\beta} \log \left( 1 - \frac{\beta-1}{\beta}e^{-\frac{2\beta-1}{\beta-1}} \right)
+\log(e \beta k) \\
& \geq 
x + \frac{\beta-1}{\beta} \log \left( 1 - \frac{\beta-1}{\beta e^2} \right)
+ \log(e \beta k) \\
& \geq 
x + \frac{1}{2} \log \left( 1 - \frac{1}{2 e^2} \right)
+\log(e \beta k)
\end{align*}
where we used that $\frac{2\beta-1}{\beta-1} \geq 2$ and, in the last line,  that $y \mapsto y \log(1-y)$ is decreasing on $(0,1)$. Now it is clear that
$$
 \frac{1}{2} \log \left( 1 - \frac{1}{2 e^2} \right)
+\log(e \beta k) = \log \left( \frac{\sqrt{2e^2-1}\beta^2}{\beta+2}\right) >0
$$
for any $\beta \in (1,2)$ so that $W \left(\frac{1}{\kappa}\left( x - \frac{\beta-1}{\beta}\log x + 1 \right)^\frac{1}{\beta} \right) 
 \geq x$, $x >0$. Since $W$ is increasing, the first expected result follows.

For the second part of the lemma, set $y=W^{-1}(x)$ and observe that
$y - \frac{1}{V'(y)} \leq 0$ if and only if
$y V'(y)=\beta \kappa^\beta y^\beta \leq 1$, proving the desired result by monotonicity of $W$.

For the last conclusion of the lemma, note that 
$W\left( \frac{1}{\kappa \beta^\frac{1}{\beta}} \right)  =
\frac{\log(e\beta)}{\beta} + \log( \kappa )$. In particular, the map $\beta \in (1,2) \mapsto W\left( \frac{1}{\kappa \beta^\frac{1}{\beta}} \right)$ is increasing so that, for $\beta \in (\sqrt{5}-1,2)$, it holds
$$
\frac{1}{2} = \frac{\log(2e)}{2} + \log \left( \frac{\sqrt{2}}{2}\right) \geq W\left( \frac{1}{\kappa \beta^\frac{1}{\beta}} \right)
\geq 
\frac{\log(e(\sqrt{5}-1))}{\sqrt{5}-1} + \log \left(
\frac{\sqrt{2}(\sqrt{5}-1)}{\sqrt{5}+1}\right)
\simeq 0.36 \geq \frac{1}{3} .
$$
\end{proof}

%%%%%%%%%%%%%%%%%%%%%%%%%%%%%%%%%%%%%%%%%%%%%%%%
%%%%%%%%%%%%%%%%%%%%%%%%%%%%%%%%%%%%%%%%%%%%%%%%
%%%%%%%%%%%%%%%%%%%%%%%%%%%%%%%%%%%%%%%%%%%%%%%%
%%%%%%%%%%%%%%%%%%%%%%%%%%%%%%%%%%%%%%%%%%%%%%%%

\newpage 

%%%%%%%%%%%%%%%%%%%%%%%%%%%%%%%%%%%%%%%%%%%%%%%%
%%%%%%%%%%%%%%%%%%%%%%%%%%%%%%%%%%%%%%%%%%%%%%%%

\section{Discrete setting}

Consider a graph $(G,V)$ with vertex set $V$. Given a probability measure $\mu$ on $G$ and an operator $L=(L(x,y))_{x,y \in V}$ symmetric in $\mathbb{L}^2(\mu)$ and satisfying $L(x,y) \geq 0$ for all $x \neq y$ and $\sum_y L(x,y)=0$ for all $x$, we are interested in the following modified log-Sobolev inequality
\begin{equation} \label{eq:modified-log-sob}
\Ent_\mu (e^f) \leq c \sum_{x,y \in V} \mu(x)L(x,y) \left( e^{f(y)} -e^{f(x)} \right) (f(y)-f(x))    .
\end{equation}
The discrete gradient 
$\nabla e^f \nabla f$ reduces to $e^f |\nabla f|^2$ in the continuous setting. Therefore Inequality \eqref{eq:modified-log-sob} is equivalent to the usual log-Sobolev inequality in the continuous. However, in the discrete setting, the behavior of the classical log-Sobolev inequality, that reads
$$
\Ent_\mu (e^f) \leq \frac{c}{2} \sum_{x,y \in V} \mu(x)L(x,y) \left( e^{f(y)} -e^{f(x)} \right)^2 
$$
might be very different from the modified one. For instance, the Poisson measure on the integers, associated to a proper birth and death process, called $M/M/\infty$ in the literature, is known to satisfy the modified log-Sobolev inequality \eqref{eq:modified-log-sob} with optimal constant the parameter of the Poisson measure, while the log-Sobolev inequality  above does not hold.

There is a large activity on this topic, in relation with models coming from statistical mechanics, transport theory and some notion of Ricci curvature on graphs. To give a complete list of the literature is out of reach. Let us mention \cite{bobkov-tetali,GRST} for two papers involving some of the authors, in this direction.

We may use the following classical notations. Given $f$, $Lf(x)=\sum_y L(x,y)f(y)$. Also, by reversibility, it is easy to see that
$$
\int f (-Lg) d\mu = \frac{1}{2}\sum_{x,y} \mu(x,y)L(x,y) (g(y)-g(x))(f(y)-f(x)) .
$$
Therefore the modified log-Sobolev inequality can equivalently be written as
$$
\Ent_\mu (e^f) \leq c \int e^f (-Lf) d\mu .
$$

\begin{thm}
Assume that, on a graph $(G,V)$, the probability measure and operator $L$ as above are satisfying the modified log-Sobolev Inequality \eqref{eq:modified-log-sob} with constant $c \in (0,\infty)$.
Then, for any $\alpha > c$, and any $f$ with $\int fd\mu=0$ the following inequality holds
$$
\log \int e^{f} d\mu 
\leq  \frac{c}{\alpha -c}  \int e^{\alpha  |Lf|}d\mu .
$$
\end{thm}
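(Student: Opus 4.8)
The plan is to transcribe the proof of the first part of Theorem~\ref{thm:CMP-Lf} to the discrete setting; the whole point of that argument is that it never uses the diffusion property except through the reformulation $\Ent_\mu(e^f)\le c\int e^f(-Lf)\,d\mu$ of the log-Sobolev inequality, the general (non-diffusive) entropic inequality, the identity $\int Lf\,d\mu=0$, and Herbst's differential scheme — all of which are available here. First I would rewrite the hypothesis \eqref{eq:modified-log-sob} in the equivalent form $\Ent_\mu(e^f)\le c\int e^f(-Lf)\,d\mu$ recorded above. Applying the entropic inequality $\int h\,e^f\,d\mu\le \Ent_\mu(e^f)+\big(\int e^f\,d\mu\big)\log\int e^h\,d\mu$ with $h=-tLf$ to an $f$ normalised by $\int e^f\,d\mu=1$ gives $\int e^f(-Lf)\,d\mu\le \frac1t\,\Ent_\mu(e^f)+\frac1t\log\int e^{-tLf}\,d\mu$, and combining with the hypothesis yields, for every $t>c$,
$$
\Ent_\mu(e^f)\le \frac{c}{t-c}\,\log\int e^{-tLf}\,d\mu .
$$

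Then I would run Herbst's argument. Set $H(\lambda)=\int e^{\lambda f}\,d\mu$; applying the last display to $\lambda f-\log H(\lambda)$ (whose exponential integrates to $1$, and using that $L$ annihilates constants) gives $\Ent_\mu(e^{\lambda f})\le \frac{c}{t-c}\,H(\lambda)\log\int e^{-t\lambda Lf}\,d\mu$ for $\lambda\in(0,1]$. Since $\Ent_\mu(e^{\lambda f})=\lambda H'(\lambda)-H(\lambda)\log H(\lambda)$, this reads
$$
\frac{d}{d\lambda}\Big(\frac{\log H(\lambda)}{\lambda}\Big)\le \frac{c}{t-c}\,\frac1{\lambda^2}\,\log\int e^{-t\lambda Lf}\,d\mu .
$$
The right-hand side is a priori not integrable at $\lambda=0$ under Jensen alone, so, exactly as in the continuous proof, I would expand the exponential and use $\int Lf\,d\mu=0$ (which holds by reversibility together with $\sum_y L(x,y)=0$): for $\lambda\in(0,1]$,
$$
\frac1{\lambda^2}\log\int e^{-t\lambda Lf}\,d\mu=\frac1{\lambda^2}\log\Big(1+\sum_{k\ge 2}\frac{(-t\lambda)^k}{k!}\int (Lf)^k\,d\mu\Big)\le \sum_{k\ge 2}\frac{t^k\lambda^{k-2}}{k!}\int |Lf|^k\,d\mu\le \int e^{t|Lf|}\,d\mu ,
$$
using $\log(1+x)\le x$ and $\lambda^{k-2}\le 1$. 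Integrating the differential inequality from $0$ to $1$ and using $\lim_{\lambda\to0}\frac{\log H(\lambda)}{\lambda}=\int f\,d\mu=0$ gives $\log\int e^f\,d\mu\le \frac{c}{t-c}\int e^{t|Lf|}\,d\mu$; choosing $t=\alpha$ is the assertion.

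Since the argument is essentially a transcription, the only genuinely substantive input is the reversibility identity $\int Lf\,d\mu=0$ — it is what makes the right-hand side of the differential inequality behave like $\int(Lf)^2\,d\mu$ rather than like $\int|Lf|\,d\mu$ as $\lambda\to0$, and hence what allows the full admissible range $\alpha>c$. The remaining points, which I expect to be the only places requiring care, are bookkeeping: when $V$ is infinite (as for the Poisson measure on $\mathbb{Z}_{\ge0}$ associated to the $M/M/\infty$ dynamics) one should first assume $\int e^{\alpha|Lf|}\,d\mu<\infty$ — otherwise the statement is vacuous — and then justify, by a truncation/monotone convergence argument, both the differentiability of $H$ at $0$ with $H'(0)=\int f\,d\mu$ and the term-by-term interchange of sum and integral above; this is routine once that exponential moment is finite.
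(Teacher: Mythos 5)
Your proof is correct and follows essentially the same route as the paper's: rewrite the modified log-Sobolev inequality as $\Ent_\mu(e^f)\le c\int e^f(-Lf)\,d\mu$, apply the variational/entropic inequality with $g=-\alpha\lambda Lf$ to get $\Ent_\mu(e^{\lambda f})\le\frac{c}{\alpha-c}\int e^{\lambda f}d\mu\,\log\int e^{-\alpha\lambda Lf}d\mu$, and then run Herbst's argument exactly as in Theorem~\ref{thm:CMP-Lf}, using $\int Lf\,d\mu=0$ to expand the exponential and control the behaviour at $\lambda=0$. Your closing remarks on integrability and truncation for infinite $V$ address precisely the "details left to the reader" in the paper.
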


\begin{proof}
Fix $\lambda \in (0,1]$ and a function $f$ with mean zero with respect to $\mu$.
By the variational formula of the entropy
$\Ent_\mu(h)=\sup\{\int hg d\mu : \log \int e^g d\mu \leq 1\}$, applied with $g= -\alpha \lambda Lf - \log \int e^{-\alpha \lambda Lf}d\mu$,
together with the modified log-Sobolev inequality we have
$$
\alpha \lambda \int e^{\lambda f}(-Lf) d\mu - \int e^{\lambda f} d\mu \log  \int e^{-\alpha \lambda Lf}d\mu
\leq \Ent_\mu(f^{\lambda f})
\leq c \int e^{\lambda f} (-L (\lambda f)) d\mu 
$$
Therefore, for $\alpha > c$, it holds
$$
\int e^{\lambda f} (-L (\lambda f)) d\mu \leq \frac{1}{\alpha -c}  \int e^{\lambda f} d\mu \log  \int e^{-\alpha \lambda Lf}d\mu .
$$
Applying the modified log-Sobolev inequality we obtain
$$
\Ent_\mu(e^{\lambda f}) \leq c \int e^{\lambda f} (-L (\lambda f)) d\mu
\leq \frac{c}{\alpha -c}  \int e^{\lambda f} d\mu \log  \int e^{-\alpha \lambda Lf}d\mu
$$
The rest of the proof goes as for the proof of Theorem \ref{thm:CMP-Lf}. Details are left to the reader.
\end{proof}

Finally we specialized to the Poisson measure $\pi(k)=e^{-\lambda} \frac{\lambda^k}{k!}$, on the integers $k=0,1 \dots$ 
and the $M/M/\infty$ queuing process. We will use our convexity argument  to obtain some new exponential inequalities.

Set $\nabla f(k) \coloneqq f(k+1)-f(k)$, $k=0,1\dots$ for the discrete gradient of a function $f$ on the integers.

\begin{thm}
Let $\lambda >0$ and for $k$ integers, define $\pi(k)=e^{-\lambda} \frac{\lambda^k}{k!}$. 
Define 
$$
G(x) \coloneqq x \mathds{1}_{[0,1]} + \exp\left\{\lambda \left(x + \left(1 + \frac{2}{x} \right) \log (\lambda x)  \right) e^{x} \right\} \mathds{1}_{(1,\infty)}, \qquad x  > 0,
$$
Then, there exists a constant $c$ (that depends only on $\lambda$) such that for any $f \colon \mathbb{N} \to \mathbb{R}$ with $f(0)=0$, it holds that
$$
\log \left( \int e^f d\pi  \right) 
\leq   
c + \int G(|\nabla f|) d\pi  
$$
and a constant $d$ (that depends only on $\lambda$) such that for $f$ with $\pi$-mean zero, \textit{i.e.}\ $\int f d\pi=0$, the following holds
$$
\log \left( \int e^f d\pi  \right) 
\leq  
c+d \int G(|\nabla f|) d\pi .
$$
\end{thm}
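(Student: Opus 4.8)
The plan is to transpose to the integers the convexity/duality argument of Theorem~\ref{th:exp-hardy}, with the telescoping identity $f(k)=\sum_{j=0}^{k-1}\nabla f(j)$ (available since $f(0)=0$) playing the role of the fundamental theorem of calculus and $\pi$ playing the role of the Gaussian density. Write $G(x)=e^{V(x)}$ on $(1,\infty)$, where $V(x)=\lambda\bigl(x+(1+\tfrac2x)\log(\lambda x)\bigr)e^{x}$, and set $L_j:=-\log\pi(j)=\lambda-j\log\lambda+\log(j!)$, so that $\pi(j)^{-1}=e^{L_j}\ge 1$ for every $j$. For each $j$ I would apply the Fenchel--Young inequality $ab\le G(a)+G^{*}(b)$, valid for an arbitrary $G$ with $G^{*}(y)=\sup_{x>0}\{xy-G(x)\}$, to $a=|\nabla f(j)|$ and $b=\pi(j)^{-1}$, then multiply by $\pi(j)$ and sum over $j<k$:
\[
|f(k)|\le\sum_{j=0}^{k-1}|\nabla f(j)|\le\int G(|\nabla f|)\,d\pi+S(k),\qquad S(k):=\sum_{j=0}^{k-1}G^{*}(\pi(j)^{-1})\,\pi(j)\ \ge 0 .
\]
Exponentiating and summing against $\pi$ gives $\log\int e^{f}d\pi\le\int G(|\nabla f|)\,d\pi+\log\sum_{k}e^{S(k)}\pi(k)$, so the first assertion holds with $c:=\log\sum_{k\ge0}e^{S(k)}\pi(k)$, \emph{provided this series is finite}. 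The entire substance of the theorem is this finiteness.

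To prove finiteness I would first note that $\sum_k e^{S(k)}\pi(k)=\sum_k e^{S(k)-L_k}$ and, since $\pi(j)/\pi(j+1)=(j+1)/\lambda$, the log-density telescopes as $L_k=\lambda+\sum_{j=0}^{k-1}\log\frac{j+1}{\lambda}$, whence
\[
S(k)-L_k=-\lambda+\sum_{j=0}^{k-1}\Bigl(G^{*}(\pi(j)^{-1})\,\pi(j)-\log\tfrac{j+1}{\lambda}\Bigr).
\]
Next I would compute $G^{*}$ exactly as in the Gaussian case: setting $W:=V+\log V'$ (an increasing bijection from a half-line of large $x$ onto a half-line $(\,\cdot\,,\infty)$), the supremum defining $G^{*}(y)$ is attained in that region once $\log y$ is large enough, and there equals $y\bigl(W^{-1}(\log y)-1/V'(W^{-1}(\log y))\bigr)$; hence for all but finitely many $j$ one has $G^{*}(\pi(j)^{-1})\pi(j)=W^{-1}(L_j)-1/V'(W^{-1}(L_j))$, the remaining bounded block of indices (near the mode $j\approx\lambda$) contributing only an $O_\lambda(1)$ to every $S(k)$. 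It then remains to bound $W^{-1}(L_j)$ from above, and this is precisely what the shape of $V$ is engineered for: a companion lemma — in the spirit of Lemmas~\ref{lem:technical} and \ref{lem:preparation}, with $W^{-1}$ being a Lambert-$W$-type inverse — should show that feeding the candidate value $x=\log\frac{j+1}{\lambda}$ into $W$ \emph{overshoots} $L_j$ by a growing amount (the insertion of $\log(\lambda x)$ rather than $\log x$ is what makes the $\lambda$-dependence cancel at leading order, and the $(1+2/x)$ factor controls the next order), yielding $W^{-1}(L_j)\le\log\frac{j+1}{\lambda}-\varepsilon_j$ with $\varepsilon_j>0$ of order $\log\log j/\log j$, so that $\sum_{j\le k}\varepsilon_j$ eventually exceeds $2\log k$. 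Combined with the telescoping identity this gives $S(k)-L_k\le O_\lambda(1)-2\log k$ for large $k$, hence $\sum_k e^{S(k)-L_k}<\infty$ and $c<\infty$. This comparison of $W^{-1}$ against $\log(j!)$, with careful bookkeeping of the unit index shift between $W^{-1}(L_j)$ and $\nabla L_j=\log\frac{j+1}{\lambda}$, is the main obstacle; all the simplifications special to the quadratic profile of Theorem~\ref{th:exp-hardy} are unavailable, which is why I would not attempt an explicit value of $c$.

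For the second assertion I would apply the first part to $\tilde f:=f-f(0)$, which satisfies $\tilde f(0)=0$ and $\nabla\tilde f=\nabla f$, obtaining $\log\int e^{f}d\pi\le f(0)+c+\int G(|\nabla f|)\,d\pi$. Since $\int f\,d\pi=0$, one has $f(0)=-\int\tilde f\,d\pi\le\int|\tilde f|\,d\pi$, and from $|\tilde f(k)|\le\sum_{j<k}|\nabla f(j)|$, exchanging the order of summation,
\[
\int|\tilde f|\,d\pi\le\sum_{j}|\nabla f(j)|\,\pi\bigl(\{n>j\}\bigr)\le(e^{\lambda}-1)\int|\nabla f|\,d\pi,
\]
where the last step uses the elementary estimate $\pi(\{n>j\})=\pi(j)\sum_{m\ge1}\lambda^{m}/\prod_{i=1}^{m}(j+i)\le(e^{\lambda}-1)\pi(j)$, a discrete one-sided Hardy/Muckenhoupt bound for $\pi$. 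Finally $\int|\nabla f|\,d\pi\le 1+C_\lambda\int G(|\nabla f|)\,d\pi$, splitting according to $|\nabla f|\le1$ or $>1$ and using $G(t)=t$ on $[0,1]$ together with $C_\lambda:=\sup_{t>1}t/G(t)<\infty$ (as $G$ is continuous and strictly positive on $[1,\infty)$ and grows superexponentially). Collecting the three estimates yields the claim with $d:=1+(e^{\lambda}-1)C_\lambda$, the constant $c$ being enlarged by $e^{\lambda}-1$. The only genuinely hard input throughout is the asymptotic lemma on $W^{-1}$ from the second paragraph; everything else is bookkeeping.
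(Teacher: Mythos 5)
Your proposal follows exactly the paper's strategy: telescoping $f(k)=\sum_{j<k}\nabla f(j)$, Fenchel--Young with the extremal decomposition $|\nabla f(j)|\cdot\pi(j)^{-1}\cdot\pi(j)$, reduction to the finiteness of $c=\log\sum_k e^{S(k)}\pi(k)$, and for the mean-zero case a discrete Hardy/Muckenhoupt bound $\pi(\{n>j\})\leq(e^\lambda-1)\pi(j)$ combined with $x\lesssim G(x)$. The paper implements the $G^*$ analysis through the auxiliary objects $\psi=\log_2 G$ and $\Phi_1(x)=x\log x(1+1/\log_2 x)$ rather than your $W=V+\log V'$, but these are just different parameterizations of the same stationarity condition $G'(x)=y$; the plan is the same one.

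However, the central quantitative claim in your second paragraph is wrong, in a way worth flagging. You assert $W^{-1}(L_j)\leq\log\tfrac{j+1}{\lambda}-\varepsilon_j$ with $\varepsilon_j$ of order $\log\log j/\log j$, and then that $\sum_{j\leq k}\varepsilon_j$ eventually exceeds $2\log k$. Neither is correct. The paper's Lemma~\ref{lem:G*} gives precisely $\pi(j)G^*(1/\pi(j))\leq\log j-\log\lambda-\tfrac{1}{j}+\tfrac{3}{j\log j}$, so the gap to $\log\tfrac{j+1}{\lambda}=\log(j+1)-\log\lambda$ is $\varepsilon_j\approx\tfrac{2}{j}-\tfrac{3}{j\log j}$, \emph{not} $\log\log j/\log j$. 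Consequently $\sum_{j\leq k}\varepsilon_j\approx 2\log k-3\log\log k+O(1)$ is strictly \emph{less} than $2\log k$, and your claimed bound $S(k)-L_k\leq O_\lambda(1)-2\log k$ does not hold as stated. What is true is the slightly weaker $S(k)-L_k\leq O_\lambda(1)-2\log k+D\log\log k$, giving $e^{S(k)-L_k}\lesssim(\log k)^D/k^2$; this is still summable, so the finiteness of $c$ is saved, but the margin is thinner than you indicate and your heuristic for the order of $\varepsilon_j$ would lead you astray if you tried to push it further (for instance to gain a denominator $H(\nabla f)$, as the paper's remarks suggest). The rest — the $W$-inversion identity $G^*(y)=y\bigl(W^{-1}(\log y)-1/V'(W^{-1}(\log y))\bigr)$, the explicit Hardy constant $e^\lambda-1$, the split at $|\nabla f|=1$ — is correct and matches the paper's proof.
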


\begin{remark}
The above integrals are understood on $\mathbb{N}$, \textit{i.e.}\ $\int g d\pi = \sum_{n=0}^\infty g(n) \pi(n)$.   

The form of $G$ is devised to ensure that $x \leq C G(x)$ for some constant $C>0$. In fact, when $x$ tends to $0$,  
$\exp\left\{\lambda \left(x + \left(1 + \frac{2}{x} \right) \log (\lambda x)  \right) e^{x} \right\}$ is much smaller that $x$.
\end{remark}

\begin{proof}
Let $f \colon \mathbb{R} \to \mathbb{R}$ with $f(0)=0$.
By duality, we have
\begin{align*}
|f(n)|
& = 
|\sum_{k=0}^{n-1} \nabla f(k) | \\
& \leq 
\sum_{k=0}^{n-1} |\nabla f(k)| \frac{1}{\pi(k)} \pi(k) \\ 
& \leq 
\sum_{k=0}^{n-1} G(|\nabla f(k)|)\pi(k) + \sum_{k=0}^{n-1} G^* \left(\frac{1}{\pi(k)} \right) \pi(k)
\end{align*}
where we set: \\
$G(x) \coloneqq x \mathds{1}_{[0,1]} + \exp\left\{\lambda \left(x + \left(1 + \frac{2}{x} \right) \log (\lambda x)  \right) e^{x} \right\} \mathds{1}_{(1,\infty)}$ and $G^*(y)=\sup_{x > 0}\{xy-G(x)\}$, for all $y \geq 0$. Therefore, 
\begin{align*}
\log \left( \int e^f d\pi  \right) 
& \leq 
 \sum_{n=0}^\infty G(|\nabla f(n)|) \pi(n) +
 \log \left( \pi(0) + \sum_{n=1}^\infty \exp \left\{ 
 \sum_{k=0}^{n-1} G^* \left(\frac{1}{\pi(k)} \right) \pi(k)
 \right\} \pi(n) \right)
\end{align*}
and we are left with proving that the second term in the right hand side of the latter is bounded. In fact we need to prove that the sum is convergent. By \eqref{eq:G*}, we have, for $n$ large enough
\begin{align*}
\sum_{k=0}^{n-1} G^* \left(\frac{1}{\pi(k)} \right) \pi(k)
& \leq 
\sum_{k=0}^{k_o} G^* \left(\frac{1}{\pi(k)} \right) \pi(k) + \mathds{1}_{n \geq k_o} \sum_{k=k_o}^{n-1} \log k 
  - \log \lambda  
  - \frac{1}{k}
  + \frac{3}{k \log k}  \\
& \leq 
C + (\log (n-1)! - n \log \lambda  - \log n + D\log_2 n)\mathds{1}_{n \geq k_o} 
\end{align*}
for some constant $C$ that depends only on $\lambda$ and some universal constant $D$. Above $k_o$ is determined by Lemma \ref{lem:G*}.
%and we used that a primitive of $\log_2 x$ is $x \log_2 x - li(x)$, where $li$ is the logarithmic integral function which is positive for $x \geq 2$ and asymptotically equivalent to $x/\log x$. 
As a consequence, considering a bigger constant $C$ if needed,
\begin{align*}
    \sum_{n=1}^\infty \exp \left\{ 
 \sum_{k=0}^{n-1} G^* \left(\frac{1}{\pi(k)} \right) \pi(k)
 \right\} \pi(n)
 & \leq 
e^C  \left( 1 + \sum_{n=2}^\infty   (n-1)! \lambda^{-n} \frac{(\log n)^D}{n} \pi(n) \right) \\
& =
e^{C} \left( 1 + e^{-\lambda} \sum_{n=2}^\infty \frac{(\log n)^D}{n^2}  \right) \\
& < \infty .
\end{align*}
This proves the first part of the theorem.

Consider a function $f$ with mean $0$ with respect to the Poisson measure, \textit{i.e.}\ such that 
$\pi(f) \coloneqq \sum_{n=0}^\infty f(n)\pi(n)=0$. Then, 
\begin{align} \label{eq:start-poisson}
\log \left( \sum_{n=0}^\infty e^{|f(n)|}\pi(n) \right) 
& = 
\log \left( \sum_{n=0}^\infty e^{|f(n) - f(0) + f(0) - \pi(f)|}\pi(n) \right) \nonumber \\
& \leq 
\log \left( \sum_{n=0}^\infty e^{|f(n) - f(0)|}\pi(n) \right) 
 + |f(0)-\pi(f)| .   
\end{align}
The first term in the right hand side can be bounded using the first part of the theorem. We need to bound the second term 
$|f(0)-\pi(f)| \leq \sum_{n=0}^\infty |f(n)-f(0)|\pi(n)$. To that purpose we may use a direct computation. Indeed, we have
\begin{align*}
\sum_{n=0}^\infty |f(n)-f(0)|\pi(n)
& =
\sum_{n=1}^\infty |\sum_{k=0}^{n-1} \nabla f(k)|\pi(n) \\
& \leq 
\sum_{k=0}^\infty |\nabla f(k)| \pi(k) \sum_{n =k+1}^\infty \frac{\pi(n)}{\pi(k)} \\
& \leq 
A \sum_{k=0}^\infty |\nabla f(k)| \pi(k)
\end{align*}
where 
$$
A\coloneqq \sup_{k=0}^\infty \frac{1}{\pi(k)} \sum_{n =k+1}^\infty \pi(n) .
$$
The above inequality belongs to the family of Hardy's inequality\footnote{After 
\cite{muckenhoupt}, Miclo \cite{miclo} developed Hardy-type inequalities in the discrete setting 
related to functional inequalities, see also \cite{bobkov-gotze} and \cite{barthe-roberto-03,BCR}.}. It should be noticed that the constant $A$ is best possible as one can convince oneself by considering the test function $f=\mathds{1}_{[k,\infty)}$. It is easy to check that $A$, that depends only on $\lambda$, is finite.
 From \eqref{eq:start-poisson} we have
$$
\log \int e^f d\pi \leq c+ \int G(|\nabla f|) d\pi + A \int |\nabla f| d\pi 
\leq c + d \int G(|\nabla f|) d\pi .
$$
where in the last inequality we used the fact that $x \leq C G(x)$, for some constant $C$ that depends only on $\lambda$, and all $x \geq 0$.
\end{proof}

In the proof above we used some technical lemmas. Recall the following well-known non-asymptotic version of the Stirling formula
(see \textit{e.g.}\ \cite{robbins})
\begin{equation} \label{eq:stirling}
e^{\frac{1}{1+ 12n}} \leq \frac{n!}{n^n e^{-n} \sqrt{2\pi n}} \leq    e^{\frac{1}{12n}} , \qquad n \geq 1 
\end{equation}
For simplicity we denote $\log_2=\log(\log)$ and $\log_3 = \log_2(\log)$ for the iterated logarithms. Given a function $F$ we denote $F^{-1}$ its inverse function, when it exists.

\begin{lem}\label{lem:phi}
 Define $\Phi_o(x)=x\log x$ for $x \geq e$, and $\Phi_1(x) =  (1 + \frac{1}{\log_2 x}) x \log x$ for $x \geq e^2$. Then, $\Phi_o$ and $\Phi_1$ are one to one increasing on $[e,\infty)$ onto itself and on $[e^2,\infty)$ onto 
 $[4e^2,\infty)$ respectively, their inverse functions $\Phi_o^{-1}$ and $\Phi_1^{-1}$ satisfy, for any $x \geq e$
 (respectively any $x \geq e^2$)
 $$
 \frac{x}{\log x} \leq \Phi_o^{-1}(x) 
 \qquad \mbox{and} 
 \qquad 
 \Phi_1^{-1}(x) \leq  \frac{x}{\log x} \left( 1 +  \frac{1}{\log x} \right) .
 $$
\end{lem}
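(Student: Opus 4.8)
The plan is to treat $\Phi_o$ and $\Phi_1$ separately; in each case I would first establish strict monotonicity by differentiating, then read off the image from the value at the left endpoint together with the behaviour at $+\infty$, and finally deduce the estimate on the inverse by composing with the (increasing) function itself, turning each inequality ``$\Phi^{-1}(x)\ \{\le,\ge\}\ h(x)$'' into ``$x\ \{\le,\ge\}\ \Phi(h(x))$''.

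For $\Phi_o$ everything is immediate: $\Phi_o'(x)=\log x+1>0$ on $[e,\infty)$, so $\Phi_o$ is strictly increasing there, and since $\Phi_o(e)=e$ and $\Phi_o(x)\to\infty$, it is a bijection of $[e,\infty)$ onto itself. For the lower bound, note that $x\mapsto x/\log x$ has derivative $(\log x-1)/(\log x)^2\ge 0$ on $[e,\infty)$, hence $x/\log x\ge e$ lies in the domain of $\Phi_o$; by monotonicity, $\Phi_o^{-1}(x)\ge x/\log x$ is equivalent to $x\ge\Phi_o(x/\log x)$, and
\[
\Phi_o\!\left(\tfrac{x}{\log x}\right)=\tfrac{x}{\log x}\bigl(\log x-\log_2 x\bigr)=x\Bigl(1-\tfrac{\log_2 x}{\log x}\Bigr)\le x
\]
because $\log_2 x\ge 0$ for $x\ge e$.

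For $\Phi_1$, differentiation gives
\[
\Phi_1'(x)=(\log x+1)+\frac{(\log x+1)\log_2 x-1}{(\log_2 x)^2},
\]
and on $[e^2,\infty)$ the factor $(\log x+1)\log_2 x$ is increasing in $x$ and equals $3\log 2>1$ at $x=e^2$, so both summands are positive and $\Phi_1$ is strictly increasing; with $\Phi_1(e^2)=4e^2$ (a direct substitution) and $\Phi_1(x)\to\infty$ this makes $\Phi_1$ a bijection of $[e^2,\infty)$ onto $[4e^2,\infty)$. For the upper bound, set $g(x):=\frac{x}{\log x}\bigl(1+\frac1{\log x}\bigr)=\frac{x(\log x+1)}{(\log x)^2}$; one checks $g$ is increasing on $[e^2,\infty)$ and $g(4e^2)\ge e^2$, so by monotonicity of $\Phi_1$ it suffices to prove $\Phi_1(g(x))\ge x$. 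Rather than expand $\Phi_1\circ g$ directly (the nested logarithm $\log_2 g(x)$ being unpleasant), I would substitute $y=\Phi_1^{-1}(x)$, i.e.\ $x=\Phi_1(y)=(1+\tfrac1{\log_2 y})\,y\log y$; dividing the claim $y\le g(x)$ by $y$ turns it into
\[
(\log x)^2\ \le\ \Bigl(1+\tfrac1{\log_2 y}\Bigr)\log y\,(\log x+1).
\]

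This last inequality is the main obstacle, being tight to leading order (both sides behave like $(\log x)^2$), so both approximations used in handling it must be compatible. Putting $a=\log y\ (\ge 2)$ and $b=\log_2 y=\log a\ (\ge\log 2)$, one has $\log x=a+b+\log(1+\tfrac1b)$, hence $a+b\le\log x\le a+b+\tfrac1b$ by $0\le\log(1+t)\le t$. Bounding the left side above by $(a+b+\tfrac1b)^2$ and the right side below by $(a+\tfrac ab)(a+b+1)$ and clearing the positive factor $b$, the inequality reduces to
\[
a^2-a(b-1)^2-b^3-\tfrac1b-2b\ \ge\ 0,\qquad a=e^b,\ b\ge\log 2 .
\]
Writing $a=e^b$ and using $e^b\ge 1+b+\tfrac{b^2}{2}$ gives $e^{2b}-e^b(b-1)^2=e^b\bigl(e^b-(b-1)^2\bigr)\ge e^b\,b\,(3-\tfrac b2)$, which dominates $b^3+\tfrac1b+2b$ on the bounded range of $b$ that needs checking, while for large $b$ the term $e^{2b}$ is overwhelming; this is a routine one-variable verification. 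Everything outside this step is elementary bookkeeping with monotone functions.
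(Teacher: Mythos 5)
Your proof is correct and, for the $\Phi_o$ half, identical to the paper's (both reduce $\tfrac{x}{\log x}\le\Phi_o^{-1}(x)$ to $\Phi_o(x/\log x)\le x$ and compute). For the $\Phi_1$ half the high-level idea is the same — use monotonicity to turn the bound on the inverse into a direct inequality — but the execution differs. The paper evaluates $\Phi_1$ at the candidate point $g(x)=\tfrac{x}{\log x}\bigl(1+\tfrac1{\log x}\bigr)$, which forces it to handle the nested term $\log_2 g(x)$; it lower-bounds the result by $x\bigl(1+\tfrac1{\log x}\bigr)\bigl(1-\tfrac{\log_2 x}{\log x}\bigr)\bigl(1+\tfrac1{\log_2 x}\bigr)$ and then checks that the product of the three brackets is at least $1$. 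You instead invert the composition (proving $y\le g(\Phi_1(y))$), which sidesteps the nested logarithm of $g$ and lands on the explicit inequality $a^2-a(b-1)^2-b^3-\tfrac1b-2b\ge 0$ with $a=e^b$, $b\ge\log 2$; I checked your algebra and this reduction is exact, and the final inequality does hold (your crude bound $e^b\ge 1+b+\tfrac{b^2}{2}$ only covers a bounded range of $b$, as you say, but $e^{2b}/2$ finishes the large-$b$ case, so the ``routine verification'' is genuinely routine). Both routes end in a comparable one-variable check, so neither buys much over the other; yours trades the three-bracket estimate for a cleaner polynomial-versus-exponential inequality.

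One small inaccuracy you share with the lemma statement itself: $\Phi_1(e^2)=2e^2\bigl(1+\tfrac1{\log 2}\bigr)\approx 4.89\,e^2$, not $4e^2$, so the ``direct substitution'' you invoke does not give $4e^2$ and the image of $\Phi_1$ is $[2e^2(1+1/\log 2),\infty)$. This is immaterial where the lemma is used (only the asymptotic bounds on the inverses matter, for large arguments), but you should not assert the equality.
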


\begin{proof}
The maps $\Phi_o, \Phi_1$  are clearly increasing therefore one to one.
 For the lower bound observe that, for $x \geq e$,
 $$
 \Phi_o \left( \frac{x}{\log x} \right)
 =
 x\frac{\log x - \log_2 x}{\log x} 
 \leq x 
 $$
 while for the upper bound  (using that $\log(1+\frac{1}{\log x}) \geq 0$ and that $-\log_2 x + \log(1+\frac{1}{\log x} \leq 0$)
 we have for $x \geq e^2$
 \begin{align*}
\Phi_1 \left( \frac{x}{\log x}\left(1 + \frac{1}{\log x} \right) \right)
 & =
 x \left(1+ \frac{1}{\log x} \right) 
 \frac{\log x - \log_2 x + \log[1+ \frac{1}{\log x }]}{\log x} \times \\
& \phantom{AAAAAAAA} \times\left( 1 + \frac{1}{\log \left( \log x - \log_2 x + \log (1+\frac{1}{\log x}) \right)}  \right)
 \\
& \geq 
x \left(1+ \frac{1}{\log x} \right) 
\left( 1 - \frac{\log_2 x}{\log x} \right)
\left( 1 + \frac{1}{\log_2 x}  \right)  .
 \end{align*}
Now it is not difficult to check that the product of the three brackets in the right hand side of the latter is greater than or equal to 1 for any $x \geq e^2$, proving that 
 %Observe that, for any $x \geq e$, $x/\log x \geq e$, $x/(\log x - \log_2 x)\geq e$ and $\log x - \log_2 x >0$ so that the above computations are licit. 
 $\Phi_1^{-1}(x) \leq \frac{x}{\log x}\left(1+ \frac{1}{\log x} \right)$ as announced.
\end{proof}

\begin{lem} \label{lem:G*}
For $\lambda >0$ define 
$$
G(x) \coloneqq x \mathds{1}_{[0,1]} + \exp\left\{\lambda \left(x + \left(1 + \frac{2}{x} \right) \log (\lambda x)  \right) e^{x} \right\} \mathds{1}_{(1,\infty)}, \qquad x  > 0,
$$
and $\psi(x) \coloneqq \log_2 G(x)$, $x \geq \max(1,1/\lambda)$.
Then there exists $x_o$ (that depends only on $\lambda$) such that for any $x \geq x_o$, the following holds
$$
\psi^{-1}(x) \leq x - \log (\lambda x)   %+ \frac{1}{x}
%+ 2 \frac{\log x}{x} 
.
$$
Furthermore,
$$
G^*(x) \coloneqq \sup_{y > 0} \{xy - G(y) \}, \qquad x \geq 0 
$$
satisfies the following inequality
$$
G^*(x) \leq x \psi^{-1}(\log_2 \Phi_1^{-1}(x)) 
%- \Phi_o^{-1}(x/2)
,  \qquad x \geq x_o
$$
where 
%${G'}^{-1}$ is the inverse function of $G'$ and 
$\Phi_1^{-1}$ is the inverse function of $\Phi_1 \colon x \mapsto x\log x(1+\frac{1}{\log_2 x})$ studied in Lemma \ref{lem:phi}. In particular, for any integer $k$ large enough (how large depends only on $\lambda$), $\pi(k)\coloneqq \frac{e^{-\lambda} \lambda^k}{k!}$ satisfies
\begin{equation} \label{eq:G*}
\pi(k)G^*(1/\pi(k)) 
\leq 
\log k 
  - \log \lambda  
  - \frac{1}{k}
  + \frac{3}{k \log k} .
\end{equation}
\end{lem}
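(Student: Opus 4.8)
The plan is to prove the three assertions of the lemma in turn: the bound on $\psi^{-1}$, the bound on $G^\ast$, and finally the estimate \eqref{eq:G*}. Throughout, write $\rho(x):=x+(1+\tfrac2x)\log(\lambda x)$, so that $\log G(x)=\lambda\rho(x)e^x$ on $(1,\infty)$ and hence $\psi(x)=\log_2 G(x)=x+\log\lambda+\log\rho(x)$. For the first assertion I would use that $\psi$ is increasing, so it suffices to show $\psi(x-\log(\lambda x))\ge x$ for $x\ge x_o$. Setting $z=x-\log(\lambda x)$, this reduces, after cancelling the $z+\log\lambda$ terms on both sides, to $\rho(z)\ge x$, that is to $(1+\tfrac2z)\log(\lambda z)\ge\log(\lambda x)$. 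Since $\log(\lambda z)=\log(\lambda x)+\log\bigl(1-\tfrac{\log(\lambda x)}{x}\bigr)\ge\log(\lambda x)-\tfrac{\log(\lambda x)}{z}$ by $\log(1-u)\ge-\tfrac{u}{1-u}$, the factor $1+\tfrac2z$ compensates the loss as soon as $(1+\tfrac2z)(1-\tfrac1z)\ge1$, i.e. as soon as $z\ge2$; this holds for all $x$ beyond a threshold depending only on $\lambda$.

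For the bound on $G^\ast$ I would argue by convex (Legendre) duality. For $y$ large, $\log G(y)=\lambda\rho(y)e^y$ is convex because $(\rho e^y)''=e^y(\rho+2\rho'+\rho'')>0$, hence $G=e^{\log G}$ is convex on some half-line $(y_1,\infty)$ with $y_1$ depending only on $\lambda$; so for $x$ large the map $y\mapsto xy-G(y)$ is concave on $(y_1,\infty)$ and attains its maximum there at the unique $y^\ast$ with $G'(y^\ast)=x$, while the part $y\le y_1$ contributes at most $xy_1$, which is $\le xy^\ast$, to $G^\ast(x)=\sup_{y>0}(xy-G(y))$. Since $G\ge0$ this gives $G^\ast(x)\le xy^\ast$, and it remains to bound $y^\ast$. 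The key identities are
$$
G'(y)=G(y)\log G(y)\Bigl(1+\tfrac{\rho'(y)}{\rho(y)}\Bigr),\qquad \Phi_1(G(y))=G(y)\log G(y)\Bigl(1+\tfrac1{\psi(y)}\Bigr),
$$
the second because $\log_2 G(y)=\psi(y)$; hence $\Phi_1(G(y))\le G'(y)$ is equivalent to $\tfrac1{\psi(y)}\le\tfrac{\rho'(y)}{\rho(y)}$, which holds for $y$ large since a comparison of leading terms gives $\psi(y)\rho'(y)-\rho(y)\ge 1-O(\log(\lambda y)/y)>0$. Applying this at $y^\ast$ yields $\Phi_1(G(y^\ast))\le x$, so $G(y^\ast)\le\Phi_1^{-1}(x)$; since $G^{-1}=\psi^{-1}\circ\log_2$ on the relevant range, we conclude $y^\ast\le\psi^{-1}(\log_2\Phi_1^{-1}(x))$, which is the claimed bound.

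Finally, for $x=1/\pi(k)$ the two previous parts give, for $k$ large,
$$
\pi(k)\,G^\ast(1/\pi(k))\le\psi^{-1}\bigl(\log_2\Phi_1^{-1}(1/\pi(k))\bigr)\le \log_2\Phi_1^{-1}(1/\pi(k))-\log\bigl(\lambda\log_2\Phi_1^{-1}(1/\pi(k))\bigr).
$$
Writing $A:=\log(1/\pi(k))=\lambda+\log k!-k\log\lambda$ and using $\Phi_1^{-1}(x)\le\tfrac{x}{\log x}\bigl(1+\tfrac1{\log x}\bigr)$ from Lemma \ref{lem:phi}, one has $\log_2\Phi_1^{-1}(1/\pi(k))\le\log\bigl(A-\log A+\tfrac1A\bigr)$; inserting the non-asymptotic Stirling bound \eqref{eq:stirling}, $A=k\log k-k(1+\log\lambda)+\tfrac12\log(2\pi k)+\lambda+\theta_k$ with $0<\theta_k\le\tfrac1{12k}$. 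Expanding $\log$ and $\log_2$ of this expression, the $\log_2 k$ contributions cancel between $\log_2\Phi_1^{-1}(1/\pi(k))$ and the subtracted $\log(\lambda\log_2\Phi_1^{-1}(1/\pi(k)))$, leaving $\log k-\log\lambda$ diminished by a term of order $\log_2 k/\log k$ up to an error of order $1/k$; since $\log_2 k/\log k$ eventually exceeds $1/k$, this falls below $\log k-\log\lambda-\tfrac1k+\tfrac3{k\log k}$, which is \eqref{eq:G*}.

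\emph{Main obstacle.} The delicate step is the last one: the expansion through Stirling must be carried out with enough precision (down to terms of order $1/k$ and $1/(k\log k)$), and one needs a two-sided control of $\log_2\Phi_1^{-1}(1/\pi(k))$ because of the subtracted logarithm of the same quantity, so that the final estimate emerges in exactly the form $\log k-\log\lambda-\tfrac1k+\tfrac3{k\log k}$ required for the telescoping sum in the proof of the theorem. The first two parts are comparatively routine once the identity $\psi(x)=x+\log\lambda+\log\rho(x)$ and the derivative formula for $G$ are in hand.
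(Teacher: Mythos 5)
Your proposal follows the paper's own proof quite closely in all three parts, and the plan is sound. For the first assertion, you reduce (as the paper does) to showing $\psi(x-\log(\lambda x))\ge x$, equivalently $\rho(z)\ge x$ with $z=x-\log(\lambda x)$; your intermediate inequality $\log(1-u)\ge -u/(1-u)$ gives the same conclusion as the paper's more direct algebraic expansion. For the second assertion you rederive the same two key identities,
\[
G'(y)=G(y)\log G(y)\Bigl(1+\frac{\rho'(y)}{\rho(y)}\Bigr),\qquad
\Phi_1(G(y))=G(y)\log G(y)\Bigl(1+\frac{1}{\psi(y)}\Bigr),
\]
compare them via $\psi\rho'-\rho\to 1$, and conclude via monotonicity of $G'$ (the paper states, essentially without comment, the Legendre formula $G^*(x)=x{G'}^{-1}(x)-G({G'}^{-1}(x))\le x{G'}^{-1}(x)$; your extra care about the non-convex piece for $y\le y_1$ is a small but welcome addition). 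The third assertion is where you are sketchiest, and you correctly flag it as the delicate step: the paper substitutes the bound $\psi^{-1}(v)\le v-\log(\lambda v)$ only \emph{after} first simplifying $\log_2\Phi_1^{-1}(1/\pi(k))$ to $\log y-\tfrac{\log y}{y}+\tfrac{1}{y^2}$ (with $y=\log(1/\pi(k))$), obtaining the clean intermediate bound $\log_2\tfrac{1}{\pi(k)}-\log_3\tfrac{1}{\pi(k)}-\log\lambda-\tfrac{\log_2(1/\pi(k))}{\log(1/\pi(k))}+\tfrac{2}{\log(1/\pi(k))}$, and then applies Stirling only at the very end with two-sided bounds $\tfrac{2}{3}k\log k\le\log\tfrac{1}{\pi(k)}\le k\log k$ and $\log k\le\log_2\tfrac{1}{\pi(k)}\le\log k+\log_2 k$. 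Your version expands Stirling earlier and works inside nested logarithms, which is harder to control precisely; your heuristic that the residual correction is of order $\log_2 k/\log k$ (hence dominates and absorbs the needed $-1/k+3/(k\log k)$) is right in spirit, but to be a proof you would have to carry out exactly the kind of careful two-sided bookkeeping you yourself identify as the main obstacle. In short: correct plan, same route as the paper, with the final Stirling expansion left as an acknowledged gap.
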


\begin{proof}
Note that $\psi$ is increasing and therefore one to one from $(\max(1,1/\lambda) ,\infty)$ onto its image. Its inverse function is well defined and we have, for $x$ large enough
\begin{align*}
\psi \left(x -  \log (\lambda x) %+ \frac{1}{x} 
\right)
& =
x 
+  \log \left( \frac{1}{x} \left[ 
x - \log (\lambda x) + \left(1 + \frac{2}{x - \log(\lambda x)} \right)\log ( \lambda(x-\log(\lambda x)))  \right]\right)  \\
& =
x 
+  \log \left( 1 + \frac{1}{x} \left[ \log(1 - \frac{\log (\lambda x)}{x}) + \frac{2}{x - \log(\lambda x)}  \log  ( \lambda(x-\log(\lambda x)))  \right] \right) 
 \geq x .
\end{align*}
The first statement on $\psi^{-1}$ follows. 
%We observe that $G$ is strictly convex if $\lambda \geq 1$. 

On $(1,\infty)$, 
$$
G'(x)
=
G(x)\log G(x) \left(1+\frac{2+x+x^2-\log(\lambda x)}{x(x^2 + (2+x) \log(\lambda x))}\right)
$$
so that $G'$ is increasing for $x \geq \max(1,1/\lambda)$ with inverse function we denote by ${G'}^{-1}$. In particular, for $x$ large enough,
$$
G^*(x)=x{G'}^{-1}(x) - G({G'}^{-1}(x)) \leq  x{G'}^{-1}(x) .
$$
Moreover, for any $x$ large enough, 
$$
G'(x) \geq G(x) \log G(x) (1+\frac{1}{\log_2 G(x)})= \Phi_1(G(x))
$$
with $\Phi_1(x):=x\log x(1+\frac{1}{\log_2 x})$. Therefore, recall that $\psi := \log_2 G$,  for $x$ large enough it holds
$$
{G'}^{-1}(x) \leq G^{-1} (\Phi_1^{-1}(x)) = \psi^{-1} ( \log_2 \Phi_1^{-1}(x)) 
$$
leading to the announced upper bound on  $G^*$. 
%On the other hand, for $x$ large enough, $G'(x) \leq 2 G(x)\log G(x) = 2 \Phi_o(G(x))$, with $\Phi_o(x)=x \log x$
%so that 
%$$
%G({G'}^{-1}(x)) \geq  \Phi^{-1}(x/2) .
%$$
%The above two bounds lead to the announced upper bound on  $G^*$.

Now by Lemma \ref{lem:phi}, we deduce that, for $x$ small enough, setting $y=\log(1/x)$,
\begin{align*}
xG^*(1/x) 
& \leq 
\psi^{-1}(\log_2 \Phi^{-1}((1/x)))
 %- x\Phi^{-1}(1/(2x)) 
 \\
 & \leq 
 \psi^{-1} \left( \log_2 \left( \frac{1}{x\log (1/x)} \left[1 + \frac{1}{\log(1/x)} \right] \right) \right)
%- \frac{1}{2 \log(1/(2x))} 
\\
& = 
\psi^{-1} \left( \log \left[y - \log y + \log\left( 1 + \frac{1}{y} \right) \right] \right) .
%- \frac{1}{2(y-\log 2)} 
\end{align*}
Since $\log(1+z) \leq z$, it holds
\begin{align*}
\log \left[y - \log y + \log\left( 1 + \frac{1}{y} \right) \right]
\leq 
\log y - \frac{\log y}{y} + \frac{1}{y^2}
.
%& = 
%\log_2 y + \log \left( 1  - \frac{2}{y} \right) \\
%& \geq 
%\log_2 y - \frac{4}{y} & \qquad (\mbox{using again } \ln(1-z) \geq -2z).
\end{align*}
Therefore, using the upper bound on $\psi^{-1}$, in the large we obtain that 
\begin{align*}
xG^*(1/x)  
& \leq 
\psi^{-1} \left( \log y - \frac{\log y}{y} + \frac{1}{y^2} \right)  \\
& \leq 
\log y - \frac{\log y}{y}  + \frac{1}{y^2} - \log \lambda - \log \left( \log y - \frac{\log y}{y} + \frac{1}{y^2} \right) 
%2 \frac{\log \left( \log y - \frac{1}{2} \frac{\log y}{y} \right)}{\log y - \frac{1}{2} \frac{\log y}{y}} 
\\
& \leq 
\log y - \log_2 y - \log \lambda  - \frac{\log y}{y} + \frac{2}{y} .
\end{align*}
%where in the last line we used that $- \frac{1}{2} \frac{\log y}{y} - \log \left( 1 - \frac{1}{2y} \right) \leq - \frac{1}{4} \frac{\log y}{y}$ for $y$ large enough.
It follows that (recall that $y=\log(1/x)$), for $k$ large enough,
\begin{align*}
 \pi(k)G^*(1/\pi(k)) 
 & \leq
 \log_2 \frac{1}{\pi(k)} - \log_3  \frac{1}{\pi(k)} - \log \lambda 
  -  \frac{\log_2 \frac{1}{\pi(k)}}{\log \frac{1}{\pi(k)}}
 + \frac{2}{ \log (1/\pi(k))} 
 .
\end{align*}
By the approximation of the Stirling formula \eqref{eq:stirling}, for $k$ large enough
\begin{align*}
\log \frac{1}{\pi(k)} 
& \leq 
k \log k - k \log \lambda - k + \frac{1}{2}\log k + \lambda + \frac{1}{2} \log (2\pi) + \frac{1}{12 k} \\ 
& \leq 
 k \log k 
 %- k \log \lambda - k + \frac{1}{2}\log k + 1 \qquad \qquad  (\mbox{since } \frac{1}{2} \log (2\pi) + \frac{1}{24} \leq 1 )
\end{align*}
and
\begin{align*}
\log \frac{1}{\pi(k)} 
& \geq 
k \log k - k \log \lambda - k + \frac{1}{2}\log k + \lambda + \frac{1}{2} \log (2\pi) \\ 
& \geq 
\frac{2}{3} k \log k  .
\end{align*}
Therefore, for $k$ large enough
\begin{align*}
\log_2 \frac{1}{\pi(k)} 
%& \leq 
%\log \left(k \log k\right) \\
& \leq 
\log k + \log_2 k .
\end{align*}
On the other hand, using again \eqref{eq:stirling}, for $k$ large enough\begin{align*}
\log_2 \frac{1}{\pi(k)} 
& \geq 
\log \left( k \log k - k \log \lambda - k + \frac{1}{2}\log k + \lambda + \frac{1}{2} \log (2\pi)  \right) \\ 
& \geq 
\log k + \log_2  k -1 \\
& \geq 
\log k
\end{align*}
so that
$$
\log_3 \frac{1}{\pi(k)}  \geq \log_2 k .
$$
It follows that
\begin{align*}
 \pi(k)G^*(1/\pi(k)) 
 & \leq
 \log k 
  - \log \lambda  
  - \frac{1}{k}
  + \frac{3}{k \log k}
  .
\end{align*}
This is the desired thesis.
\end{proof}
    
\section*{Acknowledgements} 
The authors 
warmly thank the anonymous referees for their careful reading of the
manuscript and for pointing out to them some relevant references that substantially improved the presentation of the paper.

%%%%%%%%%%%%%%%%%%%%
%%%%%%%%%%%%%%%%%%%%  Biblio

\bibliographystyle{plain}
\bibliography{paper-exponential-integrability.bib}

\end{document}